\theoremstyle{plain}
\newtheorem{thm}{Théorème}[section]
\newtheorem{lem}[thm]{Lemme}
\newtheorem{prop}[thm]{Proposition}
\newtheorem{cor}[thm]{Corollaire}
\newtheorem{lemdef}[thm]{Lemme-Définition}
\theoremstyle{definition}
\newtheorem{defn}{Définition}[section]
\newtheorem{conj}{Conjecture}[section]
\newtheorem{example}{Exemple}[section]
\theoremstyle{remark}
\newtheorem{rem}{Remarque}[section]
\newcommand{\gl}{\mathrm{GL}}
\newcommand{\ggl}{\mathfrak{gl}}
\newcommand{\ks}{\mathfrak{S}}
\newcommand{\pgl}{\mathrm{PGL}}
\newcommand{\bbg}{\mathbb{G}}
\newcommand{\cf}{\mathcal{F}}
\newcommand{\co}{\mathcal{O}}
\newcommand{\ckk}{\mathcal{K}}
\newcommand{\cl}{\mathcal{L}}
\newcommand{\cvv}{\mathcal{V}}
\newcommand{\xx}{\mathscr{X}}
\newcommand{\mt}{\widetilde{T}}
\newcommand{\kg}{\mathfrak{g}}
\newcommand{\kn}{\mathfrak{n}}
\newcommand{\kp}{\mathfrak{p}}
\newcommand{\kt}{\mathfrak{t}}
\newcommand{\ku}{\mathfrak{u}}
\newcommand{\km}{\mathfrak{m}}
\newcommand{\bn}{\mathbf{N}}
\newcommand{\bz}{\mathbf{Z}}
\newcommand{\br}{\mathbf{R}}
\newcommand{\bq}{\mathbf{Q}}
\newcommand{\ba}{\mathbf{a}}
\newcommand{\bg}{\mathbf{G}}
\newcommand{\baa}{\mathbf{A}}
\newcommand{\val}{\mathrm{val}}
\newcommand{\lie}{\mathrm{Lie}}
\newcommand{\Ad}{\mathrm{Ad}}
\newcommand{\ad}{\mathrm{ad}}
\newcommand{\tr}{\mathrm{Tr}}
\newcommand{\diag}{\mathrm{diag}}
\newcommand{\sch}{\mathrm{Sch}}
\newcommand{\vep}{\epsilon}
\newcommand{\tlambda}{\tilde{\lambda}}
\author{Zongbin \textsc{Chen}}
\address{Département de mathématiques, Bât. 425\\ Université Paris-sud 11, 91405 Orsay-Cedex, France} 
\email{zongbin.chen@math.u-psud.fr} 
\title{Pureté des fibres de Springer affines pour $\mathrm{GL}_{4}$} 
\begin{document}

\maketitle

\begin{abstract}

Pour $\gl_{4}$ et $\gamma\in \ggl_{4}(F)$ un élément semi-simple régulier non-ramifié entier, la fibre de Springer affine $\xx_{\gamma}$ admet un pavage en espaces affines, donc sa cohomologie est ``pure''.

\end{abstract}

\section{Introduction}

Soit $k$ un corps algébriquement clos. On note $F=k((\vep))$ le corps de séries de Laurent sur $k$, $\co=k[[\vep]]$ son anneau d'entier, $\kp=\vep k[[\vep]]$ son idéal maximal. On fixe une clôture algébrique $\overline{F}$ de $F$, et $\val: \overline{F}\to \bq$ la valuation discrete normalisée par $\val(\vep)=1$. Soient $G=\gl_{d}$, $T$ le tore maximal des matrices diagonales, $B$ le sous-groupe de Borel des matrices triangulaires supérieures de $G$. On notera leur algèbre de Lie par la lettre gothique correspondante. Soient $K=G(\co)$, $I$ le sous-groupe d'Iwahori standard de $G(F)$, i.e. il est l'image inverse de $B$ sous la réduction $G(\co)\to G(k)$. Les groupes $G(F),\,K,\,I$ sont muni des structures de ind-$k$-schéma en groupe. On note $\xx=G(F)/K$ la grassmannienne affine, c'est un ind-$k$-schéma qui classifie les réseaux dans $F^{d}$:
$$
\xx=\{L\subset F^{d}\mid L \text{ est un } \co\text{-module de type fini tel que } L\otimes F=F^{d}\}.
$$

Soit $\gamma\in \kg(F)$ un élément semi-simple régulier. La fibre de Springer affine 
\begin{eqnarray*}
\xx_{\gamma}=\{g\in G(F)/K\,\mid\,\Ad(g^{-1})\gamma \in \kg(\co)\}\end{eqnarray*}
a été introduite par Kazhdan et Lusztig dans \cite{kl}. C'est un sous-schéma fermé localement de type fini et de dimension finie de $\xx$, qui est non-vide si et seulement si $\gamma$ est \emph{entier} (i.e. ses valeurs propres sont entières dans $\overline{F}$). Elle est utilisée par Goresky, Kottwitz et Macpherson dans \cite{gkm2} pour montrer le lemme fondamental de Langlands-Shelstad, sous l'hypothèse suivante:

\begin{conj}[Goresky-Kottwitz-Macpherson]Soit $\gamma\in \kg(F)$ un élément semi-simple régulier entier, la cohomologie de la fibre de Springer affine $\xx_{\gamma}$ est pure au sens de Grothendieck-Deligne.\end{conj}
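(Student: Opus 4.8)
Le plan est de déduire la pureté de l'existence d'un pavage de $\xx_{\gamma}$ par des espaces affines (un $\alpha$-pavage au sens de Bialynicki-Birula). En effet, si $\xx_{\gamma}$ admet une filtration par des sous-schémas fermés dont les strates localement fermées successives sont des espaces affines — ou, plus généralement, des fibrations en espaces affines — alors sa cohomologie est concentrée en degrés pairs, libre et de type Tate, donc pure au sens de Grothendieck-Deligne. Comme $\xx_{\gamma}$ est localement de type fini mais non de type fini, on passera au quotient projectif $\xx_{\gamma}/\Lambda$ par un réseau $\Lambda\subset T_{\gamma}(F)$ du tore centralisateur agissant librement, et on construira le pavage de manière $\Lambda$-équivariante.

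La première étape est la réduction au cas \emph{elliptique} via l'action torique. Le tore centralisateur $T_{\gamma}$ agit sur $\xx_{\gamma}$; soit $S\subset T_{\gamma}$ son sous-tore $F$-déployé maximal. Je choisirais un cocaractère générique $\lambda:\bbg_{m}\to S$ et considérerais la décomposition de Bialynicki-Birula qu'il induit. Le lieu des points fixes $\xx_{\gamma}^{S}$ s'identifie à la fibre de Springer affine de $\gamma$ dans le sous-groupe de Levi $M=Z_{G}(S)$, où $\gamma$ est elliptique, tandis que les cellules attractives se projettent sur $\xx_{\gamma}^{S}$ avec des fibres affines. Ceci ramène la pureté de $\xx_{\gamma}$ à celle de la fibre de Springer affine elliptique dans $M$.

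Pour le cas elliptique — le c\oe ur de la difficulté — deux voies se présentent. La première, celle suivie dans ce texte pour $\gl_{4}$, consiste à construire explicitement un pavage affine en stratifiant $\xx_{\gamma}$ selon les valuations des racines (les invariants combinatoires mesurant la position des réseaux relativement à $\gamma$), puis à vérifier que chaque strate est une fibration en espaces affines sur une base paramétrée combinatoirement. La seconde, plus conceptuelle, consiste à réaliser $\xx_{\gamma}$ comme facteur local d'une fibre de Hitchin via la formule du produit de Ngô: le morphisme de Hitchin étant propre et son espace total lisse au-dessus du lieu elliptique, la cohomologie des fibres de Hitchin est pure d'après Deligne, et le théorème de décomposition joint au théorème du support permettrait d'en extraire la pureté du facteur local $\xx_{\gamma}$.

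L'obstacle principal sera le cas elliptique lui-même. Aucune construction uniforme d'un pavage affine n'est connue, et le fait que les strates soient des fibrations en espaces affines Zariski-localement triviales dépend finement de la ramification de $T_{\gamma}$: c'est précisément ce qui impose l'analyse cas par cas et justifie la restriction à $\gl_{4}$, où les invariants combinatoires demeurent maîtrisables. Du côté conceptuel, le passage de la pureté globale de Hitchin à celle de la fibre locale requiert une compatibilité local-globale et un contrôle des supports peu explicites, ce qui explique que la conjecture ne soit à ce jour établie que dans des cas particuliers.
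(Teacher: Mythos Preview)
The statement you are attempting to prove is presented in the paper as a \emph{conjecture}; the paper does not prove it in general, only the special cases $\gl_{3}$ and $\gl_{4}$ non-ramifi\'e. Your text is accordingly not a proof but an outline of possible strategies, as you yourself acknowledge in the final paragraph.

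There is, moreover, a concrete gap in the reduction you propose. You assert that the Bialynicki--Birula attracting cells for a generic $\lambda:\bbg_{m}\to S$ project onto $\xx_{\gamma}^{S}=\xx_{\gamma}^{M}$ with affine fibres, thereby reducing purity to the elliptic case. But Bialynicki--Birula only guarantees affine fibrations when the ambient variety is \emph{smooth}, and $\xx_{\gamma}$ is in general singular. Observe what your reduction would give in the unramified case: there $S=T_{\gamma}=T$, $M=T$, and $\xx_{\gamma}^{T}=X_{*}(T)$ is a discrete lattice, hence trivially pure; purity of $\xx_{\gamma}$ would then be immediate. Yet this is precisely the case that occupies the entire paper, and Example~\ref{typique} shows that already for unramified $\gl_{3}$ the natural torus-equivariant cells $\xx_{\gamma}\cap IvK/K$ can be singular --- the non-standard pavings of \S2--\S3 are constructed exactly to repair this failure by hand. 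As for the Hitchin route, the decomposition and support theorems govern the direct-image complex along the Hitchin map, not the cohomology of an individual fibre; extracting pointwise purity from them is not known, as you rightly concede.
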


Dans \cite{gkm1}, Goresky, Kottwitz et Macpherson ont montré cette conjecture pour $\gamma$ équivalué. L'élément $\gamma$ est dit \emph{équivalué} si $\val(\alpha(\gamma))$ ne dépend pas de la racine $\alpha$ de $G$ sur $\overline{F}$ par rapport à $Z_{\gamma}(G)$. Pour cela, ils ont construit un pavage en espaces affines de $\xx_{\gamma}$. Pour une variété $X$ sur $k$, un \emph{pavage en espaces affines} de $X$ est une filtration croissante exhaustive $X_{0}\subset X_{1}\subset\cdots$ de $X$ telle que $X_{i}$ est fermé et $X_{i}\backslash X_{i-1}$ est isomorphe à un espace affine standard, $\forall i$. Dans le cas où $\gamma\in \kt(\co)$ est équivalué, un tel pavage est obtenu en intersectant $\xx_{\gamma}$ avec le pavage de Bruhat-Tits.

Mais pour $\gamma\in \kt(\co)$ non-équivalué, les intersections $\xx_{\gamma}\cap IvK/K$ sont en général singulières. Un exemple typique est le suivant.  

\begin{example}\label{typique}

Soit $G=\gl_{3}$, $\gamma=\begin{pmatrix}\vep^{2}&&\\ &\vep^{4}& \\
&& -\vep^{4}\end{pmatrix}$. Pour $v\in \bz^{3}$, on note $\vep^{v}=\begin{pmatrix}\vep^{v_{1}}&&\\ &\vep^{v_{2}}& \\ &&\vep^{v_{3}}\end{pmatrix}$ et $C(v)=I\vep^{v}K/K$. On a
$$
C(0,2,-2)=\begin{bmatrix}1&&\co/\kp^{2}\\ \kp/\kp^{2} &1& \co/\kp^{4} \\
&& 1\end{bmatrix}\vep^{v}K/K.
$$
En utilisant la coordonné 
$$
\begin{bmatrix}1&&a_{0}+a_{1}\vep \\ b_{1}\vep &1& \sum_{i=0}^{3}c_{i}\vep^{i} \\
&& 1\end{bmatrix}\in \begin{bmatrix}1&&\co/\kp^{2}\\ \kp/\kp^{2} &1& \co/\kp^{4} \\
&& 1\end{bmatrix},
$$ 
on trouve que l'intersection $\xx_{\gamma}\cap IvK/K$ est la sous-variété de $\baa^{7}$ définie par l'équation
$$
a_{0}b_{1}=0.
$$

Lucarelli a construit dans \cite{v} un pavage en espaces affines de $\xx_{\gamma}$ pour $\pgl_{3}$. Il part d'un pavage en espaces affines de $\xx$ qui est différent de celui de Bruhat-Tits. Dans notre exemple pour $\gl_{3}$, Lucarelli rassemble le pavé singulier $C(0,2,-2)\cap \xx_{\gamma}$ et le pavé lisse $C(1,1,-2)\cap \xx_{\gamma}$, et redécoupe la réunion de ces deux pavés en utilisant la décomposition de Bruhat-Tits pour l'Iwahori 
$$
I'=\Ad(\diag(1,\vep^{2},\vep^{2}))I.
$$ 
Le pavé singulier $C(0,2,-2)\cap \xx_{\gamma}$ sera coupé en $2$ parties. D'une part on a la branche $b_{1}=0$, qui est isomorphe à $\baa^{6}$, d'autre part, on a la branche $b_{1}\neq 0,\,a_{0}=0$, qui est isomorphe à $\bbg_{m}\times \baa^{5}$. Cette dernière sera réunit avec la cellule $C(1,1,-2)\cap \xx_{\gamma}$ pour former l'espace affine $\baa^{6}$, ce que on peut voir dans le calcul suivant:

\begin{eqnarray*}
&&\begin{bmatrix}1&&a_{1}\vep \\ b_{1}\vep &1& \sum_{i=0}^{3}c_{i}\vep^{i} \\
&& 1\end{bmatrix}\begin{bmatrix}1&&\\ &\vep^{2}&\\ &&\vep^{-2}
\end{bmatrix}K/K \\
&&=\begin{bmatrix}1&b_{1}^{-1}\vep^{-1}&a_{1}\vep-b_{1}^{-1}c_{3}\vep^{2} \\  &1& \sum_{i=0}^{2}c_{i}\vep^{i} \\
&& 1\end{bmatrix}\begin{bmatrix}\vep&&\\ &\vep&\\ &&\vep^{-2}
\end{bmatrix}K/K,\end{eqnarray*}
et
$$
C(1,1,-2)\cap \xx_{\gamma}=\begin{bmatrix}1&&\kp/\kp^{3}\\  &1& \co/\kp^{3} \\
&& 1\end{bmatrix}\begin{bmatrix}\vep&&\\ &\vep&\\ &&\vep^{-2}
\end{bmatrix}K/K.
$$
Donc la branche $b_{1}\neq 0,\,a_{0}=0$ et la cellule $C(1,1,-2)\cap \xx_{\gamma}$ se rassemblent en l'espace affine 
$$
\begin{bmatrix}1&\kp^{-1}/\co&\kp/\kp^{3}\\  &1& \co/\kp^{3} \\
&& 1\end{bmatrix}\begin{bmatrix}\vep&&\\ &\vep&\\ &&\vep^{-2}
\end{bmatrix}K/K\cong \baa^{6}.
$$

\end{example}

Remarquons que dans l'exemple \ref{typique}, on peut aussi déplacer la branche $a_{0}\neq 0,\,b_{1}=0$ vers le pavé lisse $C(-2,2,0)\cap \xx_{\gamma}$, ce que ne fait pas Lucarelli. Nous utilisons en fait les deux possibilités pour obtenir une famille de  pavages de $\gl_{3}$ qui sont différents de celui de Lucarelli, mais qui nous permet de paver la fibre de Springer affine pour $\gl_{4}$.

Soit $\{e_{i}\}_{i=1}^{d}$ la base standard de $F^{d}$. Pour $m\in \bz$, on note
$$
\xx_{\geq -m}=\{L\in \xx\,\mid \, L\subset \vep^{-m}\co^{d}\}.
$$
C'est un sous schéma fermé $T$-invariant de $\xx$, et $\xx=\lim_{m\to +\infty}\xx_{\geq -m}$. Notre résultat principal est

\begin{thm}
Pour $G=\gl_{4}$ et $\gamma\in \kg(F)$ semi-simple régulier non-ramifié entier, $\xx_{\geq -m}\cap \xx_{\gamma}$ admet un pavage en espaces affines, en particulier, $\xx_{\gamma}$ est pur.
\end{thm}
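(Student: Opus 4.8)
Le plan est de suivre la stratégie suggérée par l'exemple \ref{typique} : partir d'un pavage de $\xx$ du type Bruhat--Tits associé à un sous-groupe d'Iwahori, l'intersecter avec $\xx_{\gamma}$, puis réparer les pavés singuliers en regroupant leurs branches avec des pavés lisses voisins. On commencera par se ramener au cas $\gamma\in\kt(\co)$, $\gamma=\diag(\gamma_{1},\dots,\gamma_{4})$, ce qui est loisible puisque $\gamma$ est non-ramifié semi-simple régulier ; de plus $\xx_{\gamma}$ ne dépend, à isomorphisme près, que des valuations des racines $n_{ij}=\val(\gamma_{i}-\gamma_{j})$, ce qui permet de normaliser la configuration des valuations. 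Le tore $T$ agit sur $\xx_{\gamma}$, de points fixes les $\vep^{v}\co^{4}$ ($v\in\bz^{4}$) appartenant à $\xx_{\gamma}$, et $\xx_{\geq -m}\cap\xx_{\gamma}$ en est un sous-schéma fermé $T$-invariant de type fini ; il suffira donc de paver chaque $\xx_{\geq -m}\cap\xx_{\gamma}$, de façon compatible lorsque $m\to+\infty$ et avec l'action par translation de $X_{*}(T)$ sur $\xx_{\gamma}$.

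Pour la boîte à outils : à chaque point générique $\xi$ de l'appartement standard $X_{*}(T)\otimes\br$ (i.e. hors de tout mur affine) sont associés un sous-groupe d'Iwahori $I_{\xi}\subset G(F)$ et le pavage $\xx=\bigsqcup_{v\in\bz^{4}}I_{\xi}\vep^{v}K/K$. Comme dans l'exemple \ref{typique}, chaque intersection $\xx_{\gamma}\cap I_{\xi}\vep^{v}K/K$ se plonge dans un espace affine standard $\baa^{N}$ où elle est définie par des équations de type binomial issues de la condition $\Ad(g^{-1})\gamma\in\kg(\co)$. On établira alors un critère combinatoire, portant sur $v$, $\xi$ et les $n_{ij}$, caractérisant les pavés lisses (isomorphes à un espace affine). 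Aucun choix de $\xi$ ne rend tous les pavés lisses simultanément --- c'est exactement l'obstruction du cas non-équivalué --- de sorte que l'appartement se partitionnera en régions, sur chacune desquelles un $\xi$ bien choisi rendra lisses les pavés qui la concernent.

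Vient alors le regroupement. Le long d'un mur séparant deux régions, les pavés sont singuliers ; chacun se décompose en branches (généralisant $\{a_{0}b_{1}=0\}=\{b_{1}=0\}\sqcup\{b_{1}\neq 0,\ a_{0}=0\}$), et l'on recolle chaque branche à un pavé voisin au moyen d'un changement de coordonnées explicite du type affiché dans l'exemple \ref{typique}, de sorte que la réunion devienne un espace affine. Pour $\gl_{4}$ les pavés singuliers ont davantage de branches et les recollements peuvent se propager en cascade ; il faudra donc (i) vérifier que chaque pièce reconstituée est isomorphe à un espace affine standard, et (ii) vérifier que la collection des pièces s'ordonne linéairement en une filtration croissante exhaustive $X_{0}\subset X_{1}\subset\cdots$ par des fermés à quotients successifs affines, autrement dit que la relation ``transférer une branche vers un pavé voisin'' est acyclique et bien fondée. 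Pour cela on munira les copoids $v$ intervenant dans $\xx_{\geq -m}$ d'un bon ordre --- par exemple via un sous-groupe à un paramètre générique de $T$ --- et l'on vérifiera, par une analyse cas par cas des configurations de valuations possibles pour $\gl_{4}$, que les branches ``descendent'' toujours pour cet ordre.

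Enfin, la pureté est formelle : un pavage en espaces affines de $\xx_{\geq -m}\cap\xx_{\gamma}$ entraîne que $H^{*}_{c}(\xx_{\geq -m}\cap\xx_{\gamma})$ est une extension itérée de copies de $H^{*}_{c}(\baa^{n})=\bq_{\ell}(-n)[-2n]$, donc un module de Tate, en particulier pur ; comme $\xx_{\gamma}=\varinjlim_{m}(\xx_{\geq -m}\cap\xx_{\gamma})$, on en déduit la pureté de la cohomologie de $\xx_{\gamma}$, soit la conjecture de Goresky--Kottwitz--Macpherson pour $\gl_{4}$ non-ramifié. L'obstacle principal sera l'étape de regroupement : la classification explicite des pavés singuliers de $\gl_{4}$ et, surtout, la preuve que la procédure de recollement est globalement cohérente --- acyclique, exhaustive et compatible avec la troncature $\xx_{\geq -m}$ --- et non pas seulement génératrice d'une structure affine au voisinage de chaque point fixe.
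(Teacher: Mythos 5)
Votre proposition est un programme plutôt qu'une démonstration, et elle passe à côté du mécanisme central de l'article. Le point dur n'est pas de constater que certains pavés $\xx_{\gamma}\cap I_{\xi}\vep^{v}K/K$ sont singuliers puis de ``regrouper des branches'' par des changements de coordonnées explicites à la Lucarelli : pour $\gl_{4}$ cette procédure cellule par cellule n'est ni décrite ni contrôlée (combien de branches, pourquoi les recollements sont acycliques, pourquoi chaque pièce reconstituée est un espace affine), et c'est précisément l'étape que vous reportez à une ``analyse cas par cas'' non effectuée. L'article procède tout autrement : après réduction de $\gamma$ à une forme minimale (deux types de valuations radicielles), il découpe $\xx_{\geq -m}$ en tranches $V_{b}$ définies par $w_{1}=b$, adaptées au parabolique maximal $P=MN$ de Levi $M\cong\gl_{1}\times\gl_{3}$, et utilise la rétraction $f_{P}:\xx\to\xx^{M}$. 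Le critère clef (proposition \ref{techfam}) est que $f_{P}$ restreinte à $\xx_{\gamma}$ est une fibration en espaces affines dès que les dimensions de $\ku/\ku\cap\Ad(g)\kg(\co)$ et de $\ad(\gamma)\ku/\ad(\gamma)\ku\cap\Ad(g)\kg(\co)$ sont constantes sur la base ; c'est ce critère, et non un recollement de branches, qui impose le découpage supplémentaire de chaque $R_{b}$ en morceaux $\Sigma_{0}$, $\Sigma_{i,c'}$, $\Sigma_{i,c''}^{*}$ sur lesquels ces dimensions sont constantes. On est alors ramené, par récurrence sur le rang, à paver $\xx_{\gamma}^{\gl_{3}}$ intersecté avec les tranches $S_{P}^{c}(v)$, ce qui exige toute la machinerie des pavages non standard ``en tranches'' des variétés de Schubert affines (corollaires \ref{triangle1} et \ref{triangle2}, proposition \ref{tranche}, théorème \ref{famgl3}) --- une couche entière de l'argument absente de votre plan.

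Deux autres points. D'abord, rien dans votre texte ne garantit que le critère combinatoire de lissité et l'ordre ``bien fondé'' que vous postulez existent : la remarque finale de l'article signale que la même heuristique échoue déjà pour $\gl_{d}$, $d\geq 5$ (les intersections $\xx_{\gamma}\cap S_{\varpi}^{c}(v)$ ne sont pas pures lorsque $\varpi$ n'est pas conjugué à $\varpi_{1}$ ou $\varpi_{d-1}$), ce qui montre que l'existence d'un tel schéma de regroupement n'a rien d'automatique et dépend finement du rang ; votre ``analyse cas par cas'' n'a donc aucune raison a priori d'aboutir sans l'idée de fibration sur $\xx^{M}$. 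Ensuite, la conclusion sur la pureté est correcte mais c'est la seule partie réellement formelle de votre texte ; tout le contenu mathématique est dans la construction du pavage, que votre proposition ne fournit pas.
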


L'idée est de couper $\xx_{\geq -m}\cap \xx_{\gamma}$ en parties localement fermées telles que chaque partie est une fibration en espaces affines sur une sous-variété localement fermée de $\xx_{\gamma}^{\gl_{3}}$, et donc le pavage est ramené aux pavages pour $\gl_{3}$.\\

\textbf{Notations.} On note $\Phi(G,T)=\{\alpha_{i,j}\}$ le système de racines de $G$ par rapport à $T$ et on simplifie $\alpha_{i}=\alpha_{i,i+1}$, on note $\{\varpi_{i}\}_{i=1}^{d-1}$ les poids fondamentaux correspondants. À toute racine $\alpha\in \Phi(G,T)$, on associe de la manière usuelle une co-racine $\alpha^{\vee}\in X_{*}(T)$. On note $X_{*}^{+}(T)$ le semi-groupe des co-caractères dominants. On note $W=\ks_{d}$ le groupe de Weyl de $G$, on note $s_{i,j}\in W$ la réflexion associée à la racine $\alpha_{i,j}$ et on simplifie $s_{i}=s_{i,i+1}$. Pour tout sous-groupe fermé $H$ de $G$ stable par $T$ sous l’action adjointe, on note $\Phi(H,T)$ l’ensemble des racines de $T$ dans $\lie(H)$. On note $\cf(T)$ l'ensemble des sous-groupes paraboliques de $G$ contenant $T$, et $\cl(T)$ l'ensemble des sous-groupes de Levi contenant $T$.

Pour $M\in \cl(T)$, on utilise un exposant $^{M}$ pour désigner l'objet correspondant pour $M$. On identifie la grassmannienne affine $\xx^{M}$ à un sous-ind-$k$-schéma fermé de $\xx$ par l'injection naturel $mM(\co)\to mK,\,\forall  m\in M(F)$. 

Pour $x\in \br$, on note $\lfloor x \rfloor$ le plus grand entier qui est inférieure ou égale à $x$, et $\lceil x \rceil$ le plus petit entier qui est supérieur ou égale à $x$.\\

\textbf{Remerciements.} C'est un grand plaisir pour moi de remercier G. Laumon pour m'avoir proposé ce sujet de recherche et ses encouragements constants. Je le remercie aussi pour les nombreuses améliorations qu'il a apporté à ce travail. Je remercie U. G\"ortz et T. Haines pour avoir signalé quelques imprécisions sur la longueur et l'ordre de Bruhat-Tits sur le groupe de Weyl affine. Je remercie enfin le rapporteur anonyme de cet article pour sa relecture attentive.

\section{Pavages non standard de la grassmannienne affine}

\subsection{Filtration de Moy-Prasad}

Le tore ``pivotant'' $\bbg_{m}$ agit sur le corps $F=k((\vep))$ par $t*\vep^{n}=t^{n}\vep^{n},\,\forall t\in k^{\times}, n\in \bz$. Ainsi il agit sur $G(F)$ et son algèbre de Lie $\kg_{F}$. Soit $\widetilde{T}=\bbg_{m}\times T$, le premier facteur étant le tore pivotant. On note $\nu_{0}\in X^{*}(\bbg_{m})$ le caractère définit par $\nu_{0}(t)=t$. On note $(n,\alpha_{i,j})$ le caractère $(\nu_{0}^{n},\,\alpha_{i,j})$ de $\widetilde{T}$.

L'algèbre de Lie $\kg_{F}$ se décompose en espaces propres sous l'action de $\widetilde{T}$:
$$
\kg_{F}=\bigoplus_{m\in \bz}\vep^{m}\kt\oplus\bigoplus_{(n,\alpha)\in \bz\times\Phi(G,T)}\kg_{\alpha}\vep^{n}+\kg\vep^{N},\quad  N\gg 0,
$$
où $\vep^{m}\kt$ est de poids $(m,\,0)$ et $\kg_{\alpha}\vep^{n}$ est de poids $(n,\alpha)$.

Pour $x\in \kt$ fixé, $t\in \br$, on définit une filtration sur $\kg_{F}$:
$$
\kg_{x,t}=\bigoplus_{\substack{(n,\alpha)\in \bz\times\Phi(G,T)\\\alpha(x)+n\geq t}}\kg_{\alpha}\vep^{n}+\kg\vep^{N},\quad N\gg 0.
$$ 
C'est la filtration de Moy-Prasad sur $\kg_{F}$ introduite dans \cite{mp}. Pour $t\geq 0$, on note $\bg_{x,t}$ le sous-groupe de $G(F)$ contenant $T$ dont l'algèbre de Lie est $\kg_{x,t}$. Alors, $\bg_{x}:=\bg_{x,0}$ est un sous-groupe parahorique de $G(F)$ contenant $T$, et $\bg_{x,t}$ est un sous-groupe distingué de $\bg_{x}$.

\begin{example}\label{notationx}

\begin{enumerate}

\item Pour $0\in \kt$, on a $\bg_{0}=K$. 

\item Pour $x_{0}=\left(\frac{d}{d},\frac{d-1}{d},\cdots,\frac{1}{d}\right)\in \kt$, on a $\bg_{x_{0}}=I$. 

\item Pour $\ba\in \bz^{d}$, on note $x_{\ba}=\left(\frac{d}{d}-a_{1},\frac{d-1}{d}-a_{2},\cdots,\frac{1}{d}-a_{d}\right)\in \kt$, on a
$$
\bg_{x_{\ba}}=I_{\ba}:=\Ad(\vep^{\ba})I.
$$ 
\end{enumerate}
\end{example}

\subsection{La décomposition de Bruhat-Tits}

La grassmannienne affine admet un pavage standard en espaces affines. Pour le décrire, on définit d'abord une ordre de Bruhat-Tits ``modifiée'' $\prec_{I_\ba}$ sur $X_{*}(T)$, on commence par $\prec_{I}$. Soit $v,v'\in X_{*}^{+}(T)$, alors $v\prec_{I}v'$ si et seulement si
\begin{eqnarray*}
v_{1}&\leq &v'_{1};\\
v_{1}+v_{2}&\leq & v'_{1}+v'_{2};\\
&\vdots&\\
v_{1}+\cdots+v_{d}&=&v'_{1}+\cdots+v'_{d}.
\end{eqnarray*}
Puis on pose $Wv\prec_{I}Wv'$. Pour tout $g,g'\in W/W_{v}$, où $W_{v}$ est le stabilisateur de $v$, on pose $gv\prec_{I} g'v$ si et seulement si $g'\prec_{B}g$, où $\prec_{B}$ est l'ordre sur $W/W_{v}$ induite de celle de Bruhat-Tits sur $W$ par rapport à $B$. Puis, pour $v,v'\in X_{*}(T)$, on pose
$$
v\prec_{I_{\ba}}v'\iff \vep^{-\ba}v\prec_{I}\vep^{-\ba}v'.
$$

On identifie $X_{*}(T)$ avec $\xx^{T}=T(F)/T(\co)$ par l'application $v\to \vep^{v}$.

\begin{thm}[Bruhat-Tits]

Pour $\ba\in \bz^{d}$, on a un pavage en espaces affines
$$
\xx=\bigsqcup_{v\in  X_{*}(T)}I_{\ba}vK/K,
$$
De plus, $I_{\ba}v'K/K\subset \overline{I_{\ba}vK/K}$ si et seulement si $v'\prec_{I_{\ba}} v$.
\end{thm}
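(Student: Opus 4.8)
The plan is to reduce everything to the case $\ba=0$ and then exploit the Tits system (BN-pair) structure of $G(F)$. First I would observe that left translation by $\vep^{-\ba}$ is an automorphism $\tau_{\ba}$ of the ind-scheme $\xx=G(F)/K$, and since $\vep^{-\ba}I_{\ba}=\vep^{-\ba}\Ad(\vep^{\ba})I=I\vep^{-\ba}$, it carries the subset $I_{\ba}\vep^{v}K/K$ isomorphically onto $I\vep^{v-\ba}K/K$. As $\tau_{\ba}$ is an isomorphism of ind-schemes it preserves both the property of being an affine space and all closure relations; and the order $\prec_{I_{\ba}}$ was defined precisely so that $v\prec_{I_{\ba}}v'\iff \vep^{-\ba}v\prec_{I}\vep^{-\ba}v'$, i.e. $v-\ba\prec_{I}v'-\ba$. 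Hence the statement for arbitrary $\ba$ follows formally from the case $\ba=0$, to which I reduce.

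For the decomposition itself I would use that $G(F)$ together with $I$ carries a Tits system with extended affine Weyl group $\widetilde{W}=X_{*}(T)\rtimes W\cong \ks_{d}\ltimes\bz^{d}$, giving the Iwahori–Bruhat decomposition $G(F)=\bigsqcup_{w\in\widetilde{W}}IwI$. Combining this with the finite Bruhat decomposition $K=\bigsqcup_{u\in W}IuI$ and the identification $\widetilde{W}/W\xrightarrow{\sim}X_{*}(T)$, $\vep^{v}W\mapsto v$, yields the disjoint exhaustive decomposition $\xx=\bigsqcup_{v\in X_{*}(T)}I\vep^{v}K/K$. To see that each stratum is an affine space, I would write $I\vep^{v}K/K\cong I/(I\cap\vep^{v}K\vep^{-v})$ and apply the Iwahori factorization of $I$ into $T(\co)$ and the affine root subgroups $U_{a}$. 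The stabilizer $I\cap\vep^{v}K\vep^{-v}$ contains $T(\co)$ together with exactly those $U_{a}$ fixing $\vep^{v}K$, and the quotient is the product of the remaining affine root subgroups, each isomorphic to $\baa^{1}$; thus the stratum is a standard affine space of finite dimension.

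The substantive part is the closure relation, which I would treat in two stages. Since the projection $G(F)/I\to G(F)/K$ is proper with fibres the finite flag variety $K/I$, the closure of $I\vep^{v}K/K$ is the image of the closure of any cell $IwI/I$ with $w\in\vep^{v}W$; by the standard Tits-system description of Schubert cell closures in the affine flag variety, $\overline{IwI/I}=\bigsqcup_{w'\le w}Iw'I/I$ for the Bruhat order on $\widetilde{W}$. Projecting, I obtain $\overline{I\vep^{v}K/K}=\bigsqcup_{v'\,\trianglelefteq\,v}I\vep^{v'}K/K$ for the order $\trianglelefteq$ induced on $\widetilde{W}/W=X_{*}(T)$ by the Bruhat order. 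It then remains to identify $\trianglelefteq$ with $\prec_{I}$: for dominant $v,v'$ this reduces to the classical fact that the Bruhat order on translation elements is governed by the dominance (partial-sum) order, yielding the displayed inequalities on $v_{1}+\cdots+v_{i}$; for coweights in a common $W$-orbit the order is read off from the finite flag-variety fibration over each $K$-orbit, producing the finite Bruhat order on $W/W_{v}$, with the reversal $g'\prec_{B}g$ reflecting the standard convention relating this order to Schubert cell closures.

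The main obstacle I anticipate is exactly this last combinatorial identification. The extended affine Weyl group $\widetilde{W}=X_{*}(T)\rtimes W$ is not a Coxeter group but a Coxeter group enlarged by its length-zero elements, so both the length function and the Bruhat order must be handled with care (this is the source of the imprecisions acknowledged above). One must check that the two ingredients — the dominance order across $W$-orbits and the reversed finite Bruhat order within a $W$-orbit — glue consistently into the single order $\prec_{I}$, and in particular that the covering relations of $\prec_{I}$ are realized by genuine rank-one ($SL_{2}$-type) degenerations, as illustrated in Example \ref{typique}, and not merely bounded by them.
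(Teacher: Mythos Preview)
The paper does not supply its own proof of this statement; immediately after the theorem it writes ``On peut consulter \cite{iwahori} pour la d\'emonstration'' and then passes to the Bialynicki--Birula reformulation. Your outline is a correct rendering of the standard Tits-system argument from that reference, and the reduction from general $\ba$ to $\ba=0$ via left translation by $\vep^{-\ba}$ is exactly right---this is precisely why $\prec_{I_{\ba}}$ is defined by shifting. The concern you raise in the last paragraph about matching the induced Bruhat order on $\widetilde{W}/W$ with the paper's somewhat ad hoc description of $\prec_{I}$ is legitimate, but note that in the sequel the paper only ever uses the implication in one direction, namely that inclusion in the closure forces $v'\prec_{I_{\ba}}v$ (see Proposition~\ref{pav1} and Corollaries~\ref{triangle1}, \ref{triangle2}), which is the easier half.
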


On peut consulter \cite{iwahori} pour la démonstration. On va réécrire le théorème ci-dessus sous la forme d'une décomposition de Bialynicki-Birula. Le tore $\widetilde{T}$ agit sur $\xx$. On note $\tlambda_{\ba}\in X_{*}(\mt)$ le co-caractère défini par 
\begin{equation}\label{tlam}
\tlambda_{\ba}(t)=(t^{d},\,\diag(t^{d-a_{1}d},\, t^{d-1-a_{2}d},\,\cdots,\,t^{1-a_{d}d})).
\end{equation}
Considérons l'action de $\bbg_{m}$ sur $\xx$ induite par le co-caractère $\tlambda_{\ba}\in X_{*}(\mt)$. L'ensemble des points fixes $\xx^{\bbg_{m}}$ est discret et égal à $\{vK,\;v\in X_{*}(T)\}$. De plus, on a
$$
I_{\ba}vK/K=\{L\in \xx\,\mid\,\lim_{t\to 0}\tlambda_{\ba}(t)L=vK\}.
$$
Ici, la limite porte le sens suivant: Soit $N\in \bn$ assez grand tel que $L\in \xx_{\geq -N}$, alors le morphisme $\lambda_{L}:\bbg_{m}\to \xx$ défini par $\lambda_{L}(t)=\tlambda_{\ba}(t)L,\, \forall t\in k^{\times}$ se factorise par $\xx_{\geq -N}$ puisque $\xx_{\geq -N}$ est stable sous l'action de $\widetilde{T}$. Et il se prolonge à un morphisme unique $\bar{\lambda}_{L}:\baa^{1}\to \xx_{\geq -N}$ puisque $\xx_{\geq -N}$ est propre, la limit en question est définie comme
$$
\lim_{t\to 0}\tlambda_{\ba}(t)L=\bar{\lambda}_{L}(0),
$$
qui ne dépend que de $\tlambda_{\ba}$ et de $L$.

\subsection{Pavages non standard de la grassmannienne affine tronqu\'ee}

\subsubsection{Pavage triangulaire}

On va paver la variété $\xx_{\geq m}$ d'une manière différente de celle de Bruhat-Tits.

\begin{prop}\label{pav1}

Soit $\ba=(a_{1},\cdots,a_{d})\in \bz^{d}$  et $w\in X_{*}(T)$. 

\begin{enumerate}

\item L'intersection $\xx_{\geq m}\cap I_{\ba}wK/K$ est non-vide seulement si $w\in \xx_{\geq m}$.

\item Pour $w\in \xx_{\geq m}$, l'intersection $\xx_{\geq m}\cap I_{\ba}wK/K$ est isomorphe à un espace affine standard. Plus précisément, $\xx_{\geq m}\cap I_{\ba}wK/K=J_{\ba,m,w}wK/K$, où $J_{\ba,m,w}$ est la sous-$k$-variété ouverte et fermée de $I_{\ba}$ formée des matrices $(x_{i,j})$ 
telles que $x_{i,i}=1$ et que
$$
\val(x_{i,j})\geq m_{i,j},\quad\forall i\neq j,
$$ 
où $m_{i,j}=\max(a_{i}-a_{j}+\frac{i-j}{d},\,m-w_{j})$.

\item Par conséquent, on a un pavage en espaces affines
$$
\xx_{\geq m}=\bigsqcup_{w\in \xx_{\geq m}^{T}}\xx_{\geq m}\cap I_{\ba}wK/K.
$$
L'inclusion $(\xx_{\geq m}\cap I_{\ba}vK/K)\subset\overline{(\xx_{\geq m}\cap I_{\ba}wK/K)}$ implique que $v\prec_{I_{a}} w$. 
\end{enumerate}\end{prop}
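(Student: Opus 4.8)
The plan is to work directly with explicit matrix coordinates on the Bruhat--Tits cell $I_\ba w K/K$ and to impose the closed condition $L \subset \vep^{-m}\co^d$ (the defining condition of $\xx_{\geq m}$, noting that the statement is phrased for $\xx_{\geq m}$, so one reads $v_i \leq -m$ or the appropriate normalization). First I would recall that the cell $I_\ba w K/K$ is, by the standard Bruhat--Tits parametrization, of the form $U_{\ba,w}\,\vep^w K/K$, where $U_{\ba,w}$ is an affine space: it is the group of matrices $(x_{i,j})$ with $x_{i,i}=1$ and $x_{i,j}$ ($i\neq j$) ranging over a truncated quotient $\vep^{\lceil a_i - a_j + (i-j)/d\rceil}\co \,/\, \vep^{w_j - w_i}\co$ (or the analogous bounds), the lower bound coming from membership in $I_\ba = \Ad(\vep^\ba)I$ and the upper bound coming from the fact that we have passed to the quotient by $K$ on the right after conjugating by $\vep^w$. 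The key point for (1) is that the $T$-fixed point $\vep^w K$ lies in the closure of the cell, so if the cell meets the $T$-stable closed subvariety $\xx_{\geq m}$ at all, then $\vep^w K \in \xx_{\geq m}$, i.e. $w\in \xx_{\geq m}^T$; this uses only that $\xx_{\geq m}$ is closed and $T$-invariant together with the $\tlambda_\ba$-contraction description of the cell recalled just above.

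Next, for (2), the heart is the computation of the condition ``$x\vep^w \co^d \subset \vep^{-m}\co^d$'' in terms of the entries $x_{i,j}$. Writing $L = x\vep^w\co^d$, the $j$-th generating column of $L$ is $\vep^{w_j}(x_{1,j},\dots,x_{d,j})^{t}$, so $L\subset \vep^{-m}\co^d$ is equivalent to $\val(x_{i,j}) + w_j \geq -m$ for all $i,j$, i.e. $\val(x_{i,j}) \geq -m - w_j$ for every entry, including, trivially, the diagonal (where it is automatic once $w\in\xx_{\geq m}^T$). Intersecting this half-space condition with the already-present Iwahori bound $\val(x_{i,j}) \geq \lceil a_i - a_j + (i-j)/d\rceil$ on each off-diagonal coordinate gives exactly $\val(x_{i,j}) \geq m_{i,j} := \max\!\bigl(a_i - a_j + \tfrac{i-j}{d},\, -m - w_j\bigr)$ after taking ceilings (the non-integrality of $(i-j)/d$ is absorbed by $\lceil\cdot\rceil$, which is why the statement can write the raw real number inside the max). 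Since each coordinate still ranges over a truncated $\co$-module $\vep^{m_{i,j}}\co/\vep^{w_j - w_i}\co$ — one checks the upper truncation level $w_j - w_i$ is unchanged because passing to the quotient by $K$ is insensitive to the extra lower bound — the intersection $J_{\ba,m,w}\vep^w K/K$ is again a product of such truncated modules, hence an affine space; and it is open and closed in $I_\ba$ because it is cut out by congruence conditions that are compatible with the group structure on the unipotent-type coordinates. I should double-check the degenerate cases where some $m_{i,j} \geq w_j - w_i$, in which the corresponding coordinate collapses to a point and the dimension drops accordingly — but this is harmless for the ``affine space'' conclusion.

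Finally (3) is a formal consequence: the cells $\xx_{\geq m}\cap I_\ba w K/K$ for $w$ ranging over $\xx_{\geq m}^T$ partition $\xx_{\geq m}$ because the $I_\ba$-cells partition all of $\xx$ and by (1) only those indexed by $w\in\xx_{\geq m}^T$ contribute; each piece is an affine space by (2); and the closure relation is inherited from the Bruhat--Tits closure relation of Theorem (Bruhat--Tits), since $\overline{\xx_{\geq m}\cap I_\ba w K/K} \subset \xx_{\geq m}\cap \overline{I_\ba w K/K} = \xx_{\geq m}\cap \bigsqcup_{v\prec_{I_\ba} w} I_\ba v K/K$, so a nonempty intersection of the closure of the $w$-piece with the $v$-piece forces $v\prec_{I_\ba} w$. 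To conclude this is a genuine \emph{pavage} one then enumerates the finite set $\xx_{\geq m}^T$ in any order refining $\prec_{I_\ba}$ and takes $X_i$ to be the union of the first $i$ pieces, which is closed by the closure relation just established. I expect the main obstacle to be purely bookkeeping: getting the truncation bounds $m_{i,j}$ and $w_j - w_i$ exactly right, with the correct handling of $\lceil\,\cdot\,\rceil$ coming from $(i-j)/d$, and confirming that the upper truncation level is genuinely unaffected by the new lower bound so that the piece is cut out inside the \emph{same} ambient affine cell rather than some smaller or larger one.
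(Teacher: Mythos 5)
Your argument follows the paper's proof essentially step for step: part (1) by flowing a point of the intersection to $w$ under $\tlambda_{\ba}$ and using that $\xx_{\geq m}$ is closed and $\widetilde{T}$-stable, part (2) by parametrizing the cell $I_{\ba}wK/K$ with truncated matrix coordinates and imposing the lattice-containment condition entry by entry (which is exactly how the paper produces $m_{i,j}$ as a max of the Iwahori bound and the containment bound), and part (3) as a formal consequence of the Bruhat--Tits closure relations. The only blemishes are the bookkeeping issues you yourself flagged: with the paper's convention $\xx_{\geq -m}=\{L\mid L\subset\vep^{-m}\co^{d}\}$ the extra lower bound for $\xx_{\geq m}$ is $m-w_{j}$, not $-m-w_{j}$, and the upper truncation level on the coordinate $x_{i,j}$ is $w_{i}-w_{j}$ rather than $w_{j}-w_{i}$.
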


\begin{proof}

\begin{enumerate}

\item  Prenons $y\in\xx_{\geq m}\cap I_{\ba}wK/K$. Puisque $\xx_{\geq m}$ est fermé et invariant sous l'action de $\mt$, on a 
$$
w=\lim_{t\to 0}\tlambda_{\ba}(t)y\in \xx_{\geq m}.
$$

\item Pour $w\in \xx_{\geq m}$, $gK\in I_{\ba}wK/K$, le réseaux $L=g\cdot \co^{d}$ admet une base unique $\{b_{i}\}_{i=1}^{d}$ sur $\co$ de la forme
$$
b_{i}=\vep^{w_{i}}\left(e_{i}+\sum_{j=1,\,j\neq i}^{d}a_{j,i}e_{j}\right),\quad a_{j,i}\in F
$$
tel que 
$$
a_{j}-a_{i}+\frac{j-i}{d}\leq \val(a_{j,i})<\alpha_{j,i}(w),\quad \text{or   } a_{j,i}=0.
$$
On a $L\in \xx_{\geq m}$ si et seulement si 
$$
\val(a_{j,i})+w_{i}\geq m,
$$ 
d'où la description précise de l'intersection $\xx_{\geq m}\cap I_{\ba}wK/K$ dans la proposition. \end{enumerate}
\end{proof}

Passant aux composantes connexes de la grassmannienne affine. Pour $v\in X_{*}(T)$, on note $\sch(v)=\overline{IvK/K}$ la variété de Schubert affine.

\begin{cor}\label{triangle1}

Soit $v\in X_{*}^{+}(T)$ tel que $v_{1}\geq v_{2}=\cdots=v_{d}$, soit $\ba=(a_{1},\cdots,a_{d})\in \bz^{d}$. Alors on a un pavage en espaces affines
$$
\sch(v)=\bigsqcup_{w\in \sch(v)^{T}}\sch(v)\cap I_{\ba}wK/K.
$$
L'intersection $\sch(v)\cap I_{\ba}wK/K=J_{\ba,v,w}wK/K$, où $J_{\ba,v,w}$ est la sous-$k$-variété ouverte et fermée de $I_{\ba}$ formée des matrices $(x_{i,j})$ 
telles que
$$
\val(x_{i,j})\geq m_{i,j},\quad \forall i\neq j,
$$ 
où $m_{i,j}=\max(a_{i}-a_{j}+\frac{i-j}{d},\,v_d-w_{j})$. De plus, l'inclusion 
$$
\sch(v)\cap I_{\ba}wK/K\subset\overline{(\sch(v)\cap I_{\ba}w'K/K)}
$$
implique que $w\prec_{I_{a}} w'$.
\end{cor}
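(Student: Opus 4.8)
The plan is to derive Corollary~\ref{triangle1} directly from Proposition~\ref{pav1} by identifying the affine Schubert variety $\sch(v)$, for $v$ of the special shape $v_1 \geq v_2 = \cdots = v_d$, with a suitable truncated affine Grassmannian of the form $\xx_{\geq m}$ (up to a translation by a central cocharacter). First I would reduce to the case $v_2 = \cdots = v_d = 0$: twisting everything by $\vep^{-v_d \cdot \Id}$ (which is central, hence commutes with $I_{\ba}$ and with the torus action) replaces $v$ by $v - v_d\cdot(1,\ldots,1)$, replaces $w$ by $w - v_d\cdot(1,\ldots,1)$, and shifts the quantity $v_d - w_j$ appearing in $m_{i,j}$ to $0 - (w_j - v_d) = v_d - w_j$, i.e. the formula is translation-invariant as written, so no generality is lost. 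So assume $v = (v_1, 0, \ldots, 0)$ with $v_1 = \sum_i v_i \geq 0$.

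Next I would prove the key geometric identification $\sch(v) = \xx_{\geq 0} \cap \xx_{\leq v_1}$, or more precisely that $\sch(v)$ coincides with the closed subvariety $\{L : \co^d \subset L \subset \vep^{-v_1}\co^d,\ \dim_k(L/\co^d) = v_1\}$. The inclusion of $\sch(v)$ in this locus is standard: $IvK/K$ consists of lattices obtained from $\vep^v\co^d$ by the unipotent group $I$, all of which satisfy $\co^d \subset L \subset \vep^{-v_1}\co^d$ (since $v_1 \geq 0 = v_j$), and this containment locus is closed and torus-stable, hence contains the closure. For the reverse inclusion one checks that every lattice $L$ with $\co^d \subset L \subset \vep^{-v_1}\co^d$ lies in some cell $IwK/K$ with $w$ in the closure relation below $v$ — equivalently $0 \preceq_I w \preceq_I v$ — which follows from the Bruhat-Tits closure relations together with the observation that $v = (v_1,0,\ldots,0)$ is the unique maximal element (in $\prec_I$) among dominant cocharacters with $\sum w_i = v_1$ and $0 \leq w_i \leq v_1$. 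One can also argue more directly: on the finite-dimensional truncation $\xx_{\geq 0} \cap \xx_{\leq v_1}$ the cell decomposition of Proposition~\ref{pav1} applies with $m = 0$, and comparing dimensions / indexing sets with $\sch(v)$ forces equality.

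Granting this identification, Corollary~\ref{triangle1} is just the specialization of Proposition~\ref{pav1} to $m = 0$ (after the translation reduction), intersected with the bound $L \subset \vep^{-v_1}\co^d$; but that upper bound is automatic on $\sch(v)$, so one obtains exactly the paving $\sch(v) = \bigsqcup_{w} J_{\ba,0,w}\,wK/K$ with the valuation constraints $\val(x_{i,j}) \geq \max(a_i - a_j + \tfrac{i-j}{d},\, 0 - w_j)$, and undoing the translation rewrites $0 - w_j$ as $v_d - w_j$, giving the stated $m_{i,j}$. The closure relation $\sch(v) \cap I_{\ba}wK/K \subset \overline{\sch(v) \cap I_{\ba}w'K/K} \implies w \prec_{I_{\ba}} w'$ is inherited verbatim from part (3) of Proposition~\ref{pav1}, since $\sch(v)$ is closed in $\xx_{\geq 0}$ and taking closures commutes with intersecting by a closed subvariety.

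The main obstacle I anticipate is the reverse inclusion in the geometric identification $\sch(v) = \{L : \co^d \subset L \subset \vep^{-v_1}\co^d,\ \mathrm{length} = v_1\}$ — i.e. checking that this explicit lattice locus is actually irreducible of the right dimension and equals the Schubert variety, rather than being a proper union of Schubert varieties. The cleanest route is probably to verify that the $T$-fixed points of the lattice locus are precisely $\{wK : w \in X_*(T),\ 0 \leq w_i,\ \sum w_i = v_1\}$ and that each corresponding Bialynicki-Birula cell (for the cocharacter $\tlambda_{\ba}$) is contained in the locus, so that the locus is a union of these cells; then since the cell of $v$ is open dense in $\sch(v)$ and the locus is closed, irreducible-ness of $\sch(v)$ gives $\sch(v) \subseteq \mathrm{locus}$, while the containment locus being torus-stable and closed and containing $IvK/K$ gives the opposite inclusion. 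The subtlety is purely combinatorial: one must confirm that the shape hypothesis $v_1 \geq v_2 = \cdots = v_d$ is exactly what guarantees there is a single Schubert variety filling out the truncation, which is where the condition $v_2 = \cdots = v_d$ (as opposed to a general dominant $v$) gets used.
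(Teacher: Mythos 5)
Your proposal is essentially the paper's own argument: the paper's entire proof is the one-line observation that $\sch(v)$ is a connected component of $\xx_{\geq v_{d}}$, so that Proposition~\ref{pav1} (with $m=v_{d}$) applies verbatim, and your reduction to $v_{d}=0$ followed by the identification of $\sch(v)$ with a component of a truncated Grassmannian is just a more detailed unpacking of this, including the (correct) fixed-point/cell verification that the component is exactly $\sch(v)$ and not a larger union of Schubert varieties. One sign slip to fix: for $v=(v_{1},0,\dots,0)$ with $v_{1}\geq 0$, the lattices of $\sch(v)$ satisfy $\vep^{v_{1}}\co^{d}\subset L\subset \co^{d}$ with $\dim_{k}(\co^{d}/L)=v_{1}$ (consistent with your first formula $\xx_{\geq 0}\cap\xx_{\leq v_{1}}$), whereas your ``more precise'' locus $\{L:\co^{d}\subset L\subset \vep^{-v_{1}}\co^{d}\}$ has the inclusions reversed and is the Schubert variety of $(0,\dots,0,-v_{1})$ — that is the situation of the dual statement, Corollaire~\ref{triangle2}. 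With the inclusions corrected the argument goes through unchanged.
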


\begin{proof}
C'est parce que la variété $\sch(v)$ est l'une des composants connexes de $\xx_{\geq v_{d}}$. 
\end{proof}

\subsubsection{Passage au dual}

On définit un accouplement $
\tr:F^{d}\times F^{d}\to F
$
par 
$$
\tr((x_{i}),\,(y_{i}))=\sum_{i=1}^{d}x_{i}y_{i}.
$$ 
Pour $L\in \xx$, on note 
$
L^{\vee}=\{y\in F^{d}\,\mid\,\tr(x,\,y)\in \co,\forall x\in L\}.
$ 
Alors $L^{\vee}$ est un réseau et $(L^{\vee})^{\vee}=L$. On a donc une involution $^{\vee}:\xx\to \xx$. Pour $m\in \bz$, on note $\xx_{\leq m}$ l'image de $\xx_{\geq -m}$ sous cette involution, alors
$$
\xx_{\leq m}=\{L\in \xx\,\mid\,L\supset \vep^{m}\co^{d}\}.
$$

\begin{lem}
Pour $g\in G(F),\, L\in \xx$, on a $(gL)^{\vee}=(g^{t})^{-1}L^{\vee}$.
\end{lem}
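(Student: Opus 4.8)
Le plan est de ramener l'énoncé à la formule d'adjonction pour l'accouplement $\tr$. En voyant les éléments de $F^{d}$ comme des vecteurs colonnes, l'accouplement s'écrit $\tr(x,y)=x^{t}y$, et l'on a l'identité d'adjonction
$$
\tr(gx,\,y)=(gx)^{t}y=x^{t}g^{t}y=\tr(x,\,g^{t}y),\qquad \forall\, x,y\in F^{d},\ \forall\, g\in G(F).
$$
Remarquons au passage que $g^{t}\in G(F)$ (la transposition préserve $\gl_{d}(F)$), si bien que $(g^{t})^{-1}$ est bien défini et que le membre de droite de l'énoncé a un sens.

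Ensuite j'applique cette identité au calcul de $(gL)^{\vee}$. Comme $g$ est inversible, tout élément de $gL$ s'écrit $gx$ avec $x\in L$ de façon unique; ainsi un vecteur $y\in F^{d}$ appartient à $(gL)^{\vee}$ si et seulement si $\tr(gx,y)\in \co$ pour tout $x\in L$. Par la formule d'adjonction, ceci équivaut à $\tr(x,g^{t}y)\in \co$ pour tout $x\in L$, c'est-à-dire à $g^{t}y\in L^{\vee}$, donc à $y\in (g^{t})^{-1}L^{\vee}$. On obtient donc l'égalité $(gL)^{\vee}=(g^{t})^{-1}L^{\vee}$.

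Il n'y a pas ici d'obstacle sérieux: l'argument est purement formel et repose uniquement sur la bilinéarité et la non-dégénérescence de $\tr$, déjà utilisées plus haut pour établir que $L^{\vee}$ est un réseau et que $(L^{\vee})^{\vee}=L$. Le seul point à garder en tête est la cohérence des conventions (vecteurs colonnes, pairing symétrique $\tr(x,y)=\tr(y,x)$), qui assure que le choix du côté sur lequel agit $g$ n'introduit aucune subtilité.
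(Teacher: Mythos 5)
Votre démonstration est correcte et constitue l'argument standard attendu; le papier énonce d'ailleurs ce lemme sans preuve, le considérant comme immédiat à partir de l'adjonction $\tr(gx,y)=\tr(x,g^{t}y)$, qui est exactement le calcul que vous faites. Rien à redire.
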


Utilisant ce lemme, on trouve une version duale du corollaire $\ref{triangle1}$.

\begin{cor}\label{triangle2}

Soit $v\in X_{*}^{+}(T)$ tel que $v_{1}=\cdots=v_{d-1}\geq v_{d}$, soit $\ba=(a_{1},\cdots,a_{d})\in \bz^{d}$. Alors on a un pavage en espaces affines
$$
\sch(v)=\bigsqcup_{w\in \sch(v)^{T}}\sch(v)\cap I_{\ba}wK/K.
$$
L'intersection $\sch(v)\cap I_{\ba}wK/K$ est égale à $\hat{J}_{\ba, v,w}^{-1}wK/K$, où $\hat{J}_{\ba, v,w}$ est la sous-$k$-variété ouverte et fermée de $I_{\ba}$ formée des matrices $(x_{i,j})$ telles que 
$$
\val(x_{i,j})\geq \hat{m}_{i,j},\quad \forall i\neq j,
$$ 
où $\hat{m}_{i,j}=\max(a_{i}-a_{j}+\frac{i-j}{d},\,-v_{1}+w_{i})$. De plus, l'inclusion 
$$
\sch(v)\cap I_{\ba}wK/K\subset\overline{\sch(v)\cap I_{\ba}w'K/K}
$$ 
implique que $w\prec_{I_{a}} w'$.
\end{cor}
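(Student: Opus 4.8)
Le plan est de déduire l'énoncé du corollaire~\ref{triangle1} via l'involution de dualité $L\mapsto L^{\vee}$ du lemme précédent. Posons $v^{*}=-w_{0}v=(-v_{d},-v_{1},\dots,-v_{1})$ et $\ba^{*}=-w_{0}\ba$, où $w_{0}\in W$ est l'élément le plus long (renversement des coordonnées, $w_{0}^{2}=1$). Comme $v_{1}=\cdots=v_{d-1}\geq v_{d}$, le co-caractère $v^{*}$ vérifie $v^{*}_{1}\geq v^{*}_{2}=\cdots=v^{*}_{d}$, de sorte que le corollaire~\ref{triangle1} s'applique à $\sch(v^{*})$ avec le paramètre $\ba^{*}$ et fournit le pavage $\sch(v^{*})=\bigsqcup_{w^{*}\in\sch(v^{*})^{T}}J_{\ba^{*},v^{*},w^{*}}w^{*}K/K$. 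La difficulté est que la dualité seule, qui envoie $gK$ sur $(g^{t})^{-1}K$, transforme les $I_{\ba^{*}}$-orbites en orbites sous l'Iwahori opposé $I^{t}$, lequel n'est pas de la forme $I_{\bcc}$ ($\bcc\in\bz^{d}$) dès que $d\geq3$; je la composerais donc avec la translation à gauche par la matrice de permutation anti-diagonale $\dot w_{0}\in K$, qui reconjugue $I^{t}$ en $I$ (on a $\dot w_{0}^{t}=\dot w_{0}=\dot w_{0}^{-1}$, $\dot w_{0}\vep^{\mu}\dot w_{0}^{-1}=\vep^{w_{0}\mu}$, $\dot w_{0}I^{t}\dot w_{0}^{-1}=I$).

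Soit donc $\Phi\colon\xx\to\xx$, $\Phi(gK)=\dot w_{0}(g^{t})^{-1}K=\dot w_{0}\cdot(g\co^{d})^{\vee}$; d'après le lemme c'est un isomorphisme bien défini, et $\Phi^{2}=\mathrm{id}$. On vérifie que $\Phi$ entrelace l'action à gauche de $j$ avec celle de $\dot w_{0}(j^{t})^{-1}\dot w_{0}^{-1}$; comme $\dot w_{0}I^{t}\dot w_{0}^{-1}=I$ et
\[
\dot w_{0}\,I_{\ba^{*}}^{\,t}\,\dot w_{0}^{-1}=\dot w_{0}\,\Ad(\vep^{-\ba^{*}})(I^{t})\,\dot w_{0}^{-1}=\Ad(\vep^{-w_{0}\ba^{*}})(I)=I_{\ba}
\]
(puisque $-w_{0}\ba^{*}=\ba$), $\Phi$ envoie les $I$-orbites sur les $I$-orbites et les $I_{\ba^{*}}$-orbites sur les $I_{\ba}$-orbites, tout en envoyant $\vep^{w^{*}}K$ sur $\vep^{-w_{0}w^{*}}K$; en particulier il envoie $I\vep^{v^{*}}K/K$ sur $I\vep^{v}K/K$, d'où $\Phi(\sch(v^{*}))=\sch(v)$.

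En transportant le pavage par $\Phi$, le pavé de $\sch(v^{*})$ d'indice $w^{*}$ devient le pavé de $\sch(v)$ d'indice $w=-w_{0}w^{*}$, qui parcourt $\sch(v)^{T}$. Comme $\Phi(M\vep^{w^{*}}K)=\dot w_{0}M^{-t}\vep^{-w^{*}}K=(\dot w_{0}M\dot w_{0}^{-1})^{-t}\vep^{w}K$, ce pavé s'écrit $\{P^{-1}\vep^{w}K\}$ avec $P=(\dot w_{0}M\dot w_{0}^{-1})^{t}$, et c'est un espace affine puisque $\Phi$ en est un isomorphisme. Le seul vrai point calculatoire est de vérifier que le changement de variables $M\mapsto P$ envoie la condition $\val(x_{i,j})\geq\max(a^{*}_{i}-a^{*}_{j}+\tfrac{i-j}{d},\,v^{*}_{d}-w^{*}_{j})$ définissant $J_{\ba^{*},v^{*},w^{*}}$ sur la condition $\val(x_{i,j})\geq\hat m_{i,j}=\max(a_{i}-a_{j}+\tfrac{i-j}{d},\,-v_{1}+w_{i})$ de l'énoncé, c'est-à-dire que $P$ parcourt $\hat J_{\ba,v,w}$: cela découle des identités $a^{*}_{d+1-i}=-a_{i}$, $w^{*}_{d+1-i}=-w_{i}$, $v^{*}_{d}=-v_{1}$, la transposition expliquant le passage de l'indice de colonne $j$ à l'indice de ligne $i$ dans le second argument du maximum. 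Enfin, $\Phi$ étant un homéomorphisme, toute inclusion $\sch(v)\cap I_{\ba}wK/K\subset\overline{\sch(v)\cap I_{\ba}w'K/K}$ entraîne $\sch(v^{*})\cap I_{\ba^{*}}w^{*}K/K\subset\overline{\sch(v^{*})\cap I_{\ba^{*}}(w')^{*}K/K}$, donc $w^{*}\prec_{I_{\ba^{*}}}(w')^{*}$ par le corollaire~\ref{triangle1}; et l'involution $-w_{0}$ de $X_{*}(T)$, induite par l'automorphisme du diagramme de $A_{d-1}$, échange $\prec_{I_{\ba^{*}}}$ et $\prec_{I_{\ba}}$ (de nouveau grâce à $-w_{0}\ba^{*}=\ba$), d'où $w\prec_{I_{\ba}}w'$. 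Ces deux vérifications — bornes de valuation et compatibilité des ordres — restent routinières mais demandent de prendre garde aux normalisations de $I_{\ba}$ et de $\prec_{I_{\ba}}$.
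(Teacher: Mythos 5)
Votre démonstration est correcte et suit exactement la voie que le papier se contente d'esquisser en une phrase (« utilisant ce lemme, on trouve une version duale du corollaire \ref{triangle1} ») : dualité $L\mapsto L^{\vee}$ composée avec la translation par $\dot w_{0}$ pour ramener l'Iwahori transposé à $I_{\ba}$, puis transport du pavage de $\sch(v^{*})$ donné par le corollaire \ref{triangle1}. Vos vérifications (conjugaison $\dot w_{0}I_{\ba^{*}}^{t}\dot w_{0}^{-1}=I_{\ba}$, transformation des bornes de valuation via $a^{*}_{d+1-i}=-a_{i}$, $w^{*}_{d+1-i}=-w_{i}$, $v^{*}_{d}=-v_{1}$, et compatibilité de $-w_{0}$ avec les ordres $\prec_{I_{\ba}}$) explicitent correctement les détails laissés au lecteur.
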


\subsection{Pavages en tranches de la grassmannienne affine tronqu\'ee}

On fixe $v\in X_{*}^{+}(T)$ tel que $v_{1}\geq v_{2}=\cdots=v_{d-1}\geq v_{d}$. Le but de cette section est de construire une famille de pavage non standard de la variété de Schubert affine $\sch(v)$.

\subsubsection{Partition en tranches}

On va couper $\sch(v)$ en parties localement fermées. On note 
$$
R(v)=\bigcup_{i=1}^{d-1}W\cdot\{v'\in X_{*}(T)\,\mid\,v'\prec v;\,\varpi_{i}(v')=\varpi_{i}(v)\}. 
$$ 
On note $S(v)=\bigcup_{v'\in R(v)}Iv'K/K$, c'est une sous-variété ouverte de $\sch(v)$. L'idée est de couper $S(v)$ en parties localement fermées et d'utiliser le lemme suivant pour procéder par récurrence.

\begin{lem}\label{trivial1}

Soit $v\in X_{*}^{+}(T)$ tel que $v_{1}\geq v_{2}=\cdots=v_{d-1}\geq v_{d}$. Si $\sch(v)^{T}\supsetneq R(v)$, alors il existe $\bar{v}\in X_{*}^{+}(T)$ tel que 
$$
\sch(v)^{T}=\sch(\bar{v})^{T}\cup R(v),
$$
et que $\bar{v}_{1}\geq \bar{v}_{2}=\cdots=\bar{v}_{d-1}\geq \bar{v}_{d}$.
\end{lem}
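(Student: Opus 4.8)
The statement is purely combinatorial: it concerns the $T$-fixed points of Schubert varieties, which by the Bruhat–Tits theorem are indexed by the sets $\operatorname{sch}(v)^T = \{w\in X_*(T)\mid w\prec_I v\}$. So I would forget the geometry entirely and work with the poset $(X_*(T),\prec_I)$. The hypothesis is that $\operatorname{sch}(v)^T\supsetneq R(v)$, i.e. there exists $w\prec v$ with $w\notin R(v)$, equivalently (since $R(v)$ is a union of $W$-orbits and $\varpi_i(w')=\varpi_i(w)$ for all $w'\in Ww$) a dominant $w\prec v$ with $\varpi_i(w)<\varpi_i(v)$ for every $i=1,\dots,d-1$. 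Among all such $w$ I would pick $\bar v$ to be a \emph{maximal} one for $\prec$ (the set is finite and nonempty, so a maximal element exists). The claim then is that this $\bar v$ works, and that it can be chosen of the special shape $\bar v_1\ge \bar v_2=\cdots=\bar v_{d-1}\ge \bar v_d$.

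First I would verify the shape constraint. Since $v$ itself has shape $v_1\ge v_2=\cdots=v_{d-1}\ge v_d$, its partial sums are $v_1, 2v_2,\dots$ and the "gaps" $\varpi_i(v)=v_1+\cdots+v_i$ are strictly determined by $v_1-v_2$ (for $i=1$) and $v_{d-1}-v_d$ (for $i=d-1$), with the middle ones linear. Concretely, asking $\varpi_i(\bar v)<\varpi_i(v)$ for all $i<d$ while $\varpi_d(\bar v)=\varpi_d(v)$ (total sum fixed, as $\prec$ preserves it) and $\bar v$ dominant forces $\bar v$ to be a "corner": I would take $\bar v$ to be the unique dominant coweight with $\bar v_1 = v_1-1$, $\bar v_d = v_d+1$, and $\bar v_2=\cdots=\bar v_{d-1}=v_2$, and check (a) $\bar v\prec v$ directly from the defining inequalities of $\prec_I$ on dominant coweights (each partial sum drops by exactly $1$ for $i=1,\dots,d-1$ and stays equal for $i=d$), (b) $\bar v$ still satisfies $\bar v_1\ge \bar v_2=\cdots=\bar v_{d-1}\ge\bar v_d$ provided $v_1>v_2$ and $v_{d-1}>v_d$ (the cases where one of these is an equality, i.e. $v$ is already on a wall, need separate but easy treatment — then $\operatorname{sch}(v)^T$ essentially equals $R(v)$ already or reduces to a lower rank situation), and (c) $\varpi_i(\bar v)=\varpi_i(v)-1<\varpi_i(v)$ for all $i<d$, so $\bar v\notin R(v)$.

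Next I would prove the set equality $\operatorname{sch}(v)^T = \operatorname{sch}(\bar v)^T\cup R(v)$, i.e. $\{w\preceq v\} = \{w\preceq \bar v\}\cup R(v)$ (identifying $\operatorname{sch}(v)^T$ with $\{w\mid w\prec_I v\}$, including $v$ itself which lies in $R(v)$ by convention since $\varpi_i(v)=\varpi_i(v)$ — careful here, one may need $v\in R(v)$ by taking $v'=v$; if the definition's strict "$v'\prec v$" excludes it, then $v$ is covered because $v\notin\operatorname{sch}(\bar v)^T$ forces it into the statement, so I'd double-check the convention). The inclusion $\supseteq$ is clear: $\bar v\prec v$ gives $\operatorname{sch}(\bar v)^T\subseteq\operatorname{sch}(v)^T$, and $R(v)\subseteq\operatorname{sch}(v)^T$ by definition of $R(v)$. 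For $\subseteq$: take $w\prec v$ with $w\notin R(v)$; replacing $w$ by its dominant representative (legitimate since both sides are $W$-stable and $\prec_I$ on a $W$-orbit is governed by Bruhat order, while membership in $R(v)$ and the shape conditions are $W$-invariant), we get a dominant $w$ with $\varpi_i(w)<\varpi_i(v)$ for all $i<d$; I must show $w\preceq\bar v$. This is the crux. The key point is that for dominant coweights with the same total sum, $w\prec_I\bar v$ is exactly the dominance order $\varpi_i(w)\le\varpi_i(\bar v)=\varpi_i(v)-1$ for all $i<d$ — and that is precisely what $\varpi_i(w)<\varpi_i(v)$ gives, since these are integers. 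Then on non-dominant translates I invoke the compatibility of $\prec_I$ with Bruhat order as recalled before the Bruhat–Tits theorem.

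**Main obstacle.** The real work is the last paragraph's reduction to the dominant case and the bookkeeping of the three degenerate situations ($v_1=v_2$, $v_{d-1}=v_d$, or $d$ small). The equivalence "$w\prec_I\bar v \iff \varpi_i(w)\le\varpi_i(\bar v)\ \forall i$" for \emph{dominant} $w,\bar v$ is immediate from the definition of $\prec_I$, so once one is honest about passing to dominant representatives the combinatorics collapses to "strictly less as integers implies $\le$ after subtracting $1$", which is trivial. I expect the subtlety to be entirely in (i) confirming $R(v)$'s role — that $w\notin R(v)$ with $w$ dominant really does force the strict inequality $\varpi_i(w)<\varpi_i(v)$ simultaneously for all $i$, which uses that $R(v)$ collects all $w'$ failing \emph{some} one of these — and (ii) making sure the chosen $\bar v$ has the required shape, handling walls by either a direct check or by noting the lemma's hypothesis $\operatorname{sch}(v)^T\supsetneq R(v)$ already fails on certain walls, so those cases are vacuous. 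I would not expect to need maximality of $\bar v$ after all — the explicit "corner" coweight does the job — but I would keep it as a fallback if the explicit description turns out to miss an edge case.
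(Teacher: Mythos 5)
Your generic-case argument is correct and is exactly the paper's third case: for $v_1>v_2=\cdots=v_{d-1}>v_d$ the paper also takes $\bar v=(v_1-1,v_2,\ldots,v_{d-1},v_d+1)$, and the point that every dominant $w\prec v$ with $\varpi_i(w)<\varpi_i(v)$ for all $i<d$ satisfies $\varpi_i(w)\le\varpi_i(v)-1=\varpi_i(\bar v)$ is the whole content there. The genuine gap is in your treatment of the two wall cases, which you dismiss with the claim that there ``$\sch(v)^T$ essentially equals $R(v)$ already or reduces to a lower rank situation.'' That is false: for $v=(v_1,v_2,\ldots,v_2)$ with $v_1\ge v_2+d$ (e.g.\ $v=(d,0,\ldots,0)$) the hypothesis $\sch(v)^T\supsetneq R(v)$ does hold, your candidate $(v_1-1,v_2,\ldots,v_2,v_2+1)$ is not dominant (it even violates the required shape, since $\bar v_{d-1}<\bar v_d$), and the correct choice is $\bar v=(v_1-d+1,v_2+1,\ldots,v_2+1)$; symmetrically, $\bar v=(v_1-1,\ldots,v_{d-1}-1,v_d+d-1)$ when $v_1=\cdots=v_{d-1}>v_d$. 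These are precisely the paper's first two cases, and they are not vacuous.

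Your fallback (take a maximal $w$) can be made to work, but it is not automatic: you must show that the set of dominant $w\prec v$ with $\varpi_i(w)<\varpi_i(v)$ for all $i<d$ has a \emph{maximum}, not merely maximal elements, and this is exactly where the ``subtract $1$ from each partial sum'' heuristic breaks down on the walls. In the case $v_1>v_2=\cdots=v_d$, the single constraint $\varpi_{d-1}(w)\le\varpi_{d-1}(v)-1$ forces $w_d\ge v_d+1$, hence by dominance $w_i\ge v_d+1$ for all $i\ge 2$, hence $\varpi_i(w)\le\varpi_i(v)-(d-i)$ --- strictly stronger than $\varpi_i(v)-1$ when $i<d-1$ --- and these sharper bounds are exactly $\varpi_i(\bar v)$ for the paper's $\bar v=(v_1-d+1,v_2+1,\ldots,v_2+1)$ (this also yields the existence constraint $v_1\ge v_2+d$ that the paper records). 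Without this computation, or the explicit formulas, the wall cases of the lemma are not proved.
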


\begin{proof}Dans le cas où $v_{1}>v_{2}=\cdots=v_{d}$, on a $v_{1}\geq v_{2}+d$, et $\bar{v}=(v_{1}-d+1,\,v_{2}+1,\cdots,v_{d}+1).$

Dans le cas où $v_{1}=\cdots=v_{d-1}>v_{d}$, on a $v_{1}\geq v_{d}+d$, et $\bar{v}=(v_{1}-1,\cdots,v_{d-1}-1,\,v_{d}+d-1).$

Dans le cas où $v_{1}>v_{2}=\cdots=v_{d-1}>v_{d}$, on a $\bar{v}=(v_{1}-1,\,v_{2},\cdots,v_{d-1},\,v_{d}+1).$
\end{proof}

On va ensuite couper $R(v)$ selon les sous-groupes paraboliques maximaux semi-standards. Soient $P\in \cf(T)$ maximal, $P=MN$ sa factorisation de Levi. On note $\varpi_{P}$ le poids tel que
$$
\varpi_{P}(\alpha^{\vee})=0,\;\forall \alpha\in \Phi(M,T);\quad \varpi_{P}(\alpha^{\vee})=1,\;\forall \alpha\in \Phi(N,T).
$$
On note $J_{P}\subset \{1,\cdots,d\}$ le sous-ensemble propre tel que
$$
i\in J_{P}\iff \varpi_{P}(\alpha_{i,j}^{\vee})\geq 0,\,\forall j\neq i. 
$$
On note $\bar{J}_{P}$ le complémentaire de $J_{P}$. Les $P$, $\varpi_{P}$ et $J_{P}$ se correspondent bijectivement, on les identifie en tant que sous-indice.

On prend $c\in \br$ tel que 
\begin{equation}\label{choisirc}
\begin{cases}
v_{2}<c<v_{2}+1,& \text{ si } v_{1}>v_{2}=\cdots=v_{d-1}\geq v_{d},
\\
v_{d-1}-1<c<v_{d-1},& \text{ si } v_{1}\geq v_{2}=\cdots=v_{d-1}>v_{d}.\end{cases}
\end{equation}
On note 
$$
R^{c}_{P}(v)=\{v'\in R(v)\mid \varpi_{P}(v')=\varpi_{i}(v); \,v'_{j}>c,\,\forall j\in J_{P}; \,v'_{j'}<c,\,\forall j'\notin J_{P}\},
$$
où $\varpi_{i}$ est l'unique poids fondamental dans l'orbite $W\varpi_{P}$. Il est clair que $R_{P}^{c}(v)$ ne dépend que de l'intervalle dans l'équation (\ref{choisirc}). On note $S_{P}^{c}(v)=\bigcup_{v'\in R_{P}^{c}(v)}Iv'K/K$. Alors on a la partition disjointe
$$
R(v)=\bigsqcup_{P\text{ maximal}}R_{P}^{c}(v).
$$

On va ensuite ordonner les $S_{P}^{c}(v)$. Pour $1\leq r\leq d-1$ fixé, l'union 
$
S_{r}(v)=\bigcup_{g\in W}R_{g\varpi_{r}}^{c}(v)
$
peut être ordonné par l'inverse de l'ordre de Bruhat-Tits de $g\in W$. Donc il reste à ordonner les $S_{r}(v)$, on distingue entre deux cas.


(1) $v_{1}>v_{2}=\cdots=v_{d-1}\geq v_{d}$ et $v_{2}<c<v_{2}+1$.

Pour $1\leq r\leq d-1$ tel que $v_{1}-v_{r}\geq r$, on note 
$$
v^{(r)}=(v_{1}-r+1,\,v_{2}+1,\cdots,\,v_{r}+1,\,v_{r},\cdots,\,v_{d}).
$$

\begin{lem}
\begin{enumerate}
\item
Pour $1\leq r\leq d-1$ tel que $v_{1}-v_{r}\leq r-1$, $R_{\varpi_{r}}^{c}(v)$ est vide. 

\item
Pour $1\leq r\leq d-1$ tel que $v_{1}-v_{r}\geq r$, on a
$$
R_{\varpi_{r}}^{c}(v)=\{v'\in X_{*}(T)\,\mid\,v'\prec v^{(r)};\,\varpi_{r}(v')=\varpi_{r}(v^{(r)})\}.
$$
\end{enumerate}
\end{lem}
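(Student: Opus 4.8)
The plan is to analyze the combinatorial structure of $R^c_{\varpi_r}(v)$ directly in terms of coordinates, exploiting that $v$ has the special shape $v_1 \geq v_2 = \cdots = v_{d-1} \geq v_d$ and that the threshold $c$ satisfies $v_2 < c < v_2+1$.

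For part (1), I would argue by contradiction. Suppose $v' \in R^c_{\varpi_r}(v)$; then $\varpi_r(v') = \varpi_r(v) = v_1 + (r-1)v_2$ (using $v_2 = v_r$), while the condition on $c$ forces exactly $r$ of the coordinates of $v'$ to lie above $c$, hence (being integers) to be $\geq v_2 + 1$, and the remaining $d - r$ coordinates to be $\leq v_2$. But then $\varpi_r(v')$, which is the sum of the $r$ largest coordinates of $v'$ after reordering — no, more carefully, $\varpi_r(v') = v'_{j_1} + \cdots + v'_{j_r}$ where $\{j_1,\dots,j_r\} = J_{\varpi_r}$ picks out a specific dominant-weight pairing; since $v' \prec v$ and $\varpi_r$ is a fundamental weight, $\varpi_r(v')$ equals the sum of the $r$ largest entries of $v'$. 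Those $r$ entries are precisely the ones $> c$, so their sum is at least $(v_2+1) + (r-1)(v_2+1) = r(v_2+1)$ only if all are $\geq v_2+1$; comparing with $\varpi_r(v') = v_1 + (r-1)v_2$ gives $v_1 + (r-1)v_2 \geq rv_2 + (\text{something})$, forcing $v_1 - v_2 \geq r$, i.e. $v_1 - v_r \geq r$, contradicting $v_1 - v_r \leq r-1$. So $R^c_{\varpi_r}(v) = \emptyset$.

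For part (2), I would show the two sets coincide by double inclusion. First check $v^{(r)} \in X_*^+(T)$ and $v^{(r)} \prec v$: dominance of $v^{(r)}$ uses $v_1 - r + 1 \geq v_2 + 1 \iff v_1 - v_r \geq r$ (since $v_r = v_2$), and $v_2 + 1 \geq v_r = v_2$ is clear, and $v_r \geq v_{r+1}$ inherited from $v$; the inequalities $v' \prec v$ reduce to partial-sum comparisons that telescope, with equality of total sums since we added $r-1$ and subtracted $r-1$. Also $\varpi_r(v^{(r)}) = (v_1 - r + 1) + (v_2+1) + \cdots + (v_r + 1) = v_1 + (r-1)v_2 = \varpi_r(v)$, so $v^{(r)}$ is the $\prec$-maximal element with the prescribed $\varpi_r$-value and with its first $r$ entries strictly above $c$ and the rest below — which is exactly the defining condition of $R^c_{\varpi_r}(v)$ after noting $R^c_{\varpi_r}(v)$ is $W$-stable in the appropriate sense and $v^{(r)}$ is its dominant representative's "top". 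Then any $v' \prec v^{(r)}$ with $\varpi_r(v') = \varpi_r(v^{(r)})$ automatically satisfies $v' \prec v$ (by transitivity, since $v^{(r)} \prec v$) and, because $\varpi_r(v')$ is fixed while its $r$ largest coordinates must sum to that value and each of them is $\geq v'_r \geq \cdots$, one deduces the $r$-versus-$(d-r)$ split around $c$; conversely an element of $R^c_{\varpi_r}(v)$ has its $r$ large coordinates pinned and is $\prec v^{(r)}$ by the same partial-sum bookkeeping.

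The main obstacle I anticipate is the bookkeeping that translates "exactly $r$ coordinates exceed $c$, with $v_2 < c < v_2+1$" into the integrality statement "those $r$ coordinates are each $\geq v_2 + 1$ and the other $d-r$ are each $\leq v_2$," and then verifying that the $\prec$-order constraints (a chain of partial-sum inequalities) are equivalent to $\prec v^{(r)}$ precisely on the locus where $\varpi_r$ is pinned. This is where the hypothesis $v_2 = \cdots = v_{d-1}$ does the real work: without it the "middle" coordinates would not all be forced to the single value $v_2$, and the clean description of $v^{(r)}$ would fail. I would handle this by first passing to dominant representatives (using $W$-invariance of the defining conditions), reducing everything to a statement about dominant cocharacters and their dominance order, where the partial-sum inequalities can be checked coordinate by coordinate.
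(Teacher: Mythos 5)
Your argument is correct and takes essentially the same route as the paper: part (1) is the same integrality-and-summation count (the $r$ coordinates above $c$ are each $\geq v_{2}+1$, so $\varpi_{r}(v')\geq r(v_{2}+1)>v_{1}+(r-1)v_{2}$ once $v_{1}-v_{r}\leq r-1$), and part (2) amounts to the paper's one-line observation that $v^{(r)}$ is the maximal element of $R_{\varpi_{r}}^{c}(v)$, for which your partial-sum bookkeeping on dominant representatives supplies exactly the verification the paper leaves implicit.
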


\begin{proof}

\begin{enumerate}
\item
On raisonne par l'absurde. Supposons $w\in R_{\varpi_{r}}^{c}(v)$, alors $w_{i}>c>v_{i}$ et donc $w_{i}\geq v_{i}+1$ pour $i=2,\cdots,r$. Parce que $v_{1}\leq v_{r}+r-1<c+r-1$, on a
$$
\sum_{i=1}^{r}w_{i}> c+\sum_{i=2}^{r}(v_{i}+1)\geq c+r-1+\sum_{i=2}^{r}v_{i}\geq \sum_{i=1}^{r}v_{i},
$$
ce qui est une contradiction à l'hypothèse que $w\prec v$.

\item

C'est parce que $v^{(r)}$ est le plus longue élément dans $R_{\varpi_{r}}^{c}(v)$.
\end{enumerate}
\end{proof}

Donc, on a l'égalité
$$
\sch(\bar{v})\cup \bigcup_{i=r}^{d-1}S_{i}(v)=\sch(v^{(r)}),
$$
ce qui donne l'ordre entre les $S_{i}(v)$: on a
$
S_{d-1}(v)\prec S_{d-2}(v)\prec\cdots \prec S_{1}(v).
$
La figure \ref{order1} donne un exemple de l'ordre de pavage pour $\gl_{3}$ dans ce cas.

\begin{figure}
\begin{center}

\begin{tikzpicture}
\draw (-3.5, -2)--(3.5,2);
\draw (-3.5, 2)--(3.5,-2);
\draw (0, 3)--(0,-3.5);

\draw[red, dashed] (-2.5, 2.7) node[anchor=east]{$v_{1}=c$}--(1,-3.3);
\draw[red, dashed] (2.5, 2.7) node[anchor=east]{$v_{2}=c$}--(-1,-3.3);
\draw[red, dashed] (-3.7, 0.8) node[anchor=east]{$v_{3}=c$}--(3.7,0.8);

\filldraw[black] (-1.7, 1.6) circle (1pt)
                 (1.7,1.6) circle (1pt)                 
                 (-2.2, 0.67) circle (1pt)
                 (2.2, 0.67) circle (1pt)
                 (0.55, -2.3) circle (1pt)                 
                 (-0.55, -2.3) circle (1pt)
                 (2, 1.6) circle (1pt)
                 (2.35, 0.93) circle (1pt)
                 (-2, 1.6) circle (1pt)
                 (-2.35, 0.93) circle (1pt)                 
                 (-0.4, -2.56) circle (1pt)
                 (0.4, -2.56) circle (1pt);
                               
\draw (-1.7, 1.6)-- node[anchor=south]{\color{blue} 3} (1.7,1.6);
\draw (-2.2, 0.67)-- node[anchor=east]{\color{blue}1}(-0.55,-2.3);
\draw (2.2, 0.67)-- node[anchor=west]{\color{blue}2}(0.55,-2.3);

\draw (2, 1.6)--node[anchor=west]{\color{blue}6} (2.35, 0.93);
\draw (-2, 1.6)--node[anchor=east]{\color{blue}5} (-2.35,0.93);
\draw (-0.4, -2.56)-- node[anchor=north]{\color{blue}4}(0.4,-2.56);

\draw[->, thick](0,0)--(0.5,0) node[anchor=west]{$\alpha$};
\draw[->, thick](0,0)--(-0.25, 0.433) node[anchor=south]{$\beta$};
\end{tikzpicture}
\caption{L'ordre de pavage pour $\gl_{3}$--premier cas.}
\label{order1}
\end{center}
\end{figure}

(2) $v_{1}\geq v_{2}=\cdots=v_{d-1}>v_{d}$ et $v_{d-1}-1<c<v_{d-1}$.

Pour $1\leq r\leq d-1$ tel que $v_{r+1}-v_{d}\geq d-r$, on note
$$
v_{(r)}=(v_{1},\,\cdots,\,v_{r},\,v_{r+1}-1,\cdots,v_{d-1}-1,\,v_{d}+d-r-1). 
$$
Alors parallèlement on a

\begin{lem}
\begin{enumerate}
\item
Pour $1\leq r\leq d-1$ tel que $v_{r+1}-v_{d}<d-r$, $R_{\varpi_{r}}^{c}(v)$ est vide. 
\item
Pour $1\leq r\leq d-1$ tel que $v_{r+1}-v_{d}\geq d-r$, on a 
$$
R_{\varpi_{r}}^{c}(v)=\{v'\in X_{*}(T)\,\mid\,v'\prec v_{(r)};\,\varpi_{r}(v')=\varpi_{r}(v_{(r)})\}.
$$
\end{enumerate} 
\end{lem}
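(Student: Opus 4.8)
The plan is to mirror, \emph{mutatis mutandis}, the proof of the first‑case lemma just established, interchanging the roles of the top coordinate $v_{1}$ and the bottom coordinate $v_{d}$. Conceptually this is forced by the duality $L\mapsto L^{\vee}$ already used to pass from Corollaire \ref{triangle1} to Corollaire \ref{triangle2}: composing $^{\vee}$ with the action of the longest Weyl element gives an automorphism $\tilde\iota$ of $\xx$ carrying each cell $IvK/K$ to $I\tilde v K/K$ with $\tilde v_{i}=-v_{d+1-i}$; it preserves the Bruhat–Tits order $\prec$, turns a coweight of the present shape $v_{1}\geq v_{2}=\cdots=v_{d-1}>v_{d}$ into one of the first‑case shape, exchanges the standard maximal parabolic of type $r$ with that of type $d-r$ and the fundamental weight $\varpi_{r}$ with $\varpi_{d-r}$ (the total degree being fixed on $R(v)$), sends the choice $v_{d-1}-1<c<v_{d-1}$ to the first‑case choice of $c$, and carries $v_{(r)}$ to the element $v^{(d-r)}$ attached to $\tilde v$; moreover $v_{r+1}-v_{d}\geq d-r$ becomes exactly $\tilde v_{1}-\tilde v_{d-r}\geq d-r$. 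Thus the two statements below are the images under $\tilde\iota$ of the two statements of the first‑case lemma, and one may either invoke this directly or reproduce the argument.

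Reproducing it: for (1), suppose for contradiction that $w\in R_{\varpi_{r}}^{c}(v)$. The $d-r$ coordinates of $w$ outside $J_{\varpi_{r}}=\{1,\dots,r\}$ satisfy $w_{j}<c$; since $\varpi_{r}(w)=\varpi_{r}(v)$ and $w\in R(v)$ preserves the total degree, $\sum_{j>r}w_{j}=\sum_{j>r}v_{j}=(d-1-r)v_{d-1}+v_{d}$; and since $v_{d-1}-1<c<v_{d-1}$ and the $w_{j}$ are integers, $w_{j}\leq v_{d-1}-1$, so $\sum_{j>r}w_{j}\leq (d-r)(v_{d-1}-1)$. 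The hypothesis $v_{r+1}-v_{d}<d-r$, which for $r\leq d-2$ reads $v_{d-1}-v_{d}\leq d-r-1$, is precisely the inequality $(d-1-r)v_{d-1}+v_{d}>(d-r)(v_{d-1}-1)$, giving the contradiction (the extreme index $r=d-1$, where only one coordinate lies outside $J_{\varpi_{r}}$, should be treated directly). For (2), one first checks that $v_{(r)}$ itself belongs to $R_{\varpi_{r}}^{c}(v)$: the hypothesis $v_{r+1}-v_{d}\geq d-r$ is exactly what makes $v_{(r)}$ a dominant coweight, with last coordinate $v_{d}+d-r-1\leq v_{d-1}-1<c$, with first $r$ coordinates equal to $v_{1},\dots,v_{r}$, all $\geq v_{d-1}>c$, with $\varpi_{r}(v_{(r)})=v_{1}+\cdots+v_{r}=\varpi_{r}(v)$ and with $v_{(r)}\prec v$; one then checks that $v_{(r)}$ is the $\prec$‑largest element of $R_{\varpi_{r}}^{c}(v)$, after which the asserted equality $R_{\varpi_{r}}^{c}(v)=\{v'\in X_{*}(T)\mid v'\prec v_{(r)},\ \varpi_{r}(v')=\varpi_{r}(v_{(r)})\}$ follows from maximality together with the behaviour of $\prec$ on a level set of $\varpi_{r}$, as in the first case.

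The delicate point in either route is the final step of (2): deriving the equality of sets from the fact that $v_{(r)}$ is the largest element, which requires understanding how $\prec$ interacts both with the level set $\{\varpi_{r}=\mathrm{const}\}$ and with the sign pattern of the coordinates relative to $c$. This is, however, no harder than the corresponding step for the first case — literally the same computation transported by $\tilde\iota$ — so the only real work is the bookkeeping of which coordinate inequalities are strict and the separate treatment of the extreme values $r=1$ and $r=d-1$, where one of the blocks of coordinates degenerates.
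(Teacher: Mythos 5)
Votre proposition est correcte et coïncide essentiellement avec l'argument du texte : l'article ne donne ici aucune démonstration au-delà du mot ``parallèlement'', et votre calcul miroir (transport de la preuve du premier cas par $L\mapsto L^{\vee}$ composé avec le plus long élément de Weyl, ou reprise directe de l'estimation des sommes partielles sur les coordonnées situées sous $c$) est exactement l'argument parallèle attendu. Votre mise en garde sur l'indice extrême $r=d-1$ est d'ailleurs pertinente — là $v_{(d-1)}=v$ et l'ensemble n'est pas vide, de sorte que l'hypothèse $v_{r+1}-v_{d}\geq d-r$ doit se lire $v_{d-1}-v_{d}\geq d-r$ pour que la dichotomie soit exacte — point que l'article lui-même passe sous silence.
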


Donc, on a l'égalité
$$
\sch(\bar{v})\cup \bigcup_{i=1}^{r}S_{i}(v)=\sch(v_{(r)}),
$$
ce qui donne l'ordre entre les $S_{i}(v)$: on a $S_{1}(v)\prec S_{2}(v)\prec\cdots\prec S_{d-1}(v)$. La figure \ref{order2} donne un exemple de l'ordre de pavage pour $\gl_{3}$ dans ce cas.

\begin{figure}[htbp]
\begin{center}

\begin{tikzpicture}
\draw (-3.5, -2)--(3.5,2);
\draw (-3.5, 2)--(3.5,-2);
\draw (0, 3)--(0,-3.5);

\draw[red, dashed] (-2.5, 2.7) node[anchor=east]{$v_{1}=c$}--(1,-3.3);
\draw[red, dashed] (2.5, 2.7) node[anchor=west]{$v_{2}=c$}--(-1,-3.3);
\draw[red, dashed] (-3.7, 0.8) node[anchor=east]{$v_{3}=c$}--(3.7,0.8);

\filldraw[black] (-1.7, 1.6) circle (1pt)
                 (1.7,1.6) circle (1pt)                 
                 (-2.2, 0.67) circle (1pt)
                 (2.2, 0.67) circle (1pt)
                 (0.55, -2.3) circle (1pt)                 
                 (-0.55, -2.3) circle (1pt)
                 (1.85, 1.34) circle (1pt)
                 (2.1, 0.93) circle (1pt)
                 (-1.85, 1.34) circle (1pt)
                 (-2.1, 0.93) circle (1pt)                 
                 (-0.3, -2.3) circle (1pt)
                 (0.3, -2.3) circle (1pt);
                                  
\draw (-1.7, 1.6)-- node[anchor=south]{\color{blue}6} (1.7,1.6);
\draw (-2.2, 0.67)-- node[anchor=east]{\color{blue}4}(-0.55,-2.3);
\draw (2.2, 0.67)-- node[anchor=west]{\color{blue}5}(0.55,-2.3);

\draw (1.85, 1.34)node[anchor=west]{\color{blue}3} --(2.1,0.93);
\draw (-1.85, 1.34)node[anchor=east]{\color{blue}2}-- (-2.1,0.93);
\draw (-0.3, -2.3)-- node[anchor=north]{\color{blue}1}(0.3,-2.3);

\draw[->, thick](0,0)--(0.5,0) node[anchor=west]{$\alpha$};
\draw[->, thick](0,0)--(-0.25, 0.433) node[anchor=south]{$\beta$};

\end{tikzpicture}

\caption{L'ordre de pavage pour $\gl_{3}$--deuxième cas.}
\label{order2}
\end{center}
\end{figure}

\subsubsection{Pavage non standard en tranches}

On va repaver les $S_{P}^{c}(v)$ en espaces affines. Pour cela, on montre qu'ils sont des fibrations en espaces affines sur certaines variétés de Schubert affines de $\xx^{M}$, et on se ramène à repaver ces variétés de Schubert affines.

\begin{lemdef}\label{quotvect}

Soit $Z$ un sous-$k$-schéma réduit de $\xx$ de type fini. Soit $V$ un sous-$\co$-module de type fini de $\kg_{F}$. On suppose que $\dim_{k}(V/V\cap \Ad(g)\kg(\co))$ est indépendent de $g$ pour $gK\in Z$, alors les $V/V\cap \Ad(g)\kg(\co)$ s'organisent en un fibré vectoriel que on notera $\tilde{V}/\tilde{V}\cap \ckk$ sur $Z$.
\end{lemdef}

\begin{proof}

Choisissons $N\in \bn$ assez grand tel que
$$
\vep^{N}\kg(\co)\subset \Ad(g)\kg(\co)\subset \vep^{-N}\kg(\co),\;\forall gK\in Z.
$$
On note $L_{N}$ le fibré vectoriel constant sur $Z$ avec fibre $\vep^{-N}\kg(\co)/\vep^{N}\kg(\co)$. On note $\ckk'$ le sous-fibré vectoriel de $L_{N}$ tel que sa fibre sur $gK$ est $\Ad(g)\kg(\co)/\vep^{N}\kg(\co)$. On note $\cvv'$ le sous-fibré vectoriel de $L_{N}$ tel que sa fibre sur $gK$ est $(V+\vep^{N}\kg(\co))/\vep^{N}\kg(\co)$. L'hypothèse implique que l'intersection $\cvv'\cap \ckk'$ est équi-dimensionel sur $Z$, et donc il est un fibré vectoriel sur $Z$. Le quotient $\cvv'/\cvv'\cap \ckk'$ est donc le fibré vectoriel que l'on cherche.
\end{proof}

Pour $x=gK\in \xx$, soit $g=nmk,\,n\in N(F),m\in M(F),\,k\in K$ la décomposition d'Iwasawa de $g$, on note $x_{P}$ le point $mK^{M}\in \xx^{M}$, qui ne dépend que de $x$ et de $P$.

\begin{defn}

On note $f_{P}:\xx\to \xx^{M}$ la rétraction $f_{P}(x)=x_{P},\,\forall x\in \xx$.

\end{defn}

\begin{lem}\label{affine1}

La rétraction
$$
f_{P}:S_{P}^{c}(v)\to S_{P}^{c}(v)\cap \xx^{M}
$$
est une fibration en espaces affines, en particulier sa restriction à $IwK/K$ l'est aussi pout tout $ w\in R_{P}^{c}(v)$. 
\end{lem}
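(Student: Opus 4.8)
The statement to prove is that the retraction $f_P : S_P^c(v) \to S_P^c(v) \cap \xx^M$ is a fibration in affine spaces. Let me sketch how I would attack this.

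The plan is to realise $f_P$ on $S_P^c(v)$ as (essentially) the projection of a vector bundle, via Lemme-Définition~\ref{quotvect}. The starting observation is that the fibres of $f_P$ are $N(F)$-orbits: if $x=nmk$ is an Iwasawa decomposition of $x\in G(F)$, then $f_P^{-1}(x_P)=N(F)mK/K=N(F)\cdot x$ inside $\xx$. So $f_P$ restricted to $S_P^c(v)$ is the map that collects points lying on a common $N(F)$-orbit, and the first thing to pin down is the image and the "truncation": I would check that $f_P$ intertwines the $I$-action on $\xx$ with the action of $I^M$ on $\xx^M$ up to the factor $I\cap\bar N(F)\subset K$, that on $S_P^c(v)$ this factor is harmless, and that $S_P^c(v)$ is a union of whole fibres of the restriction $f_P|_{S(v)}$ — the boundedness of these fibres coming from $S(v)\subset\sch(v)$. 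Granting this, $f_P(IwK/K)=I^MwK^M/K^M$ for $w\in R_P^c(v)$ and $f_P(S_P^c(v))=S_P^c(v)\cap\xx^M$.

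The heart of the argument is the fibrewise computation. Fix $w\in R_P^c(v)$ and write a point of $IwK/K$ as $uwK$ with $u=(u_{i,j})$ unipotent, as in Proposition~\ref{pav1} and Corollaire~\ref{triangle1}. Since $\Phi(N,T)=\{\alpha_{i,j}\mid i\in J_P,\ j\in \bar J_P\}$, I would factor $u=u_N\cdot u'$, where $u_N$ collects the coordinates $u_{i,j}$ with $i\in J_P,\ j\in\bar J_P$ and $u'$ the remaining ones; then $f_P(uwK)$ depends only on $u'$ (mod $K^M$), so the fibre of $f_P$ over a point of $S_P^c(v)\cap\xx^M$ is cut out inside the affine space of the $u_N$-coordinates by the valuation inequalities coming from membership in $\sch(v)$, i.e.\ from Corollaire~\ref{triangle1} and its dual Corollaire~\ref{triangle2}. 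The crucial point is that the choice of $c$ in (\ref{choisirc}), through the defining inequalities $w_j>c$ for $j\in J_P$ and $w_{j'}<c$ for $j'\in\bar J_P$, forces these constraints on the entries $u_{i,j}$ ($i\in J_P,\ j\in\bar J_P$) to become plain lower bounds $\val(u_{i,j})\ge m_{i,j}$ with no dependence on $u'$ — the coupling responsible for the singularity of Exemple~\ref{typique} is exactly what the separation at level $c$ removes. Hence each fibre is a coordinate-linear subspace, an affine space, of dimension independent of the point of $S_P^c(v)\cap\xx^M$; the two cases of (\ref{choisirc}) are treated in parallel using $\ref{triangle1}$ and $\ref{triangle2}$ respectively.

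To upgrade "all fibres are affine spaces of the same dimension" to "$f_P$ is a fibration in affine spaces," I would apply Lemme-Définition~\ref{quotvect} with $Z=S_P^c(v)\cap\xx^M$ (reduced and of finite type after truncating by some $\xx_{\geq -N}$) and $V$ a large enough $\co$-lattice in $\kn(F)=\lie(N)\otimes F$: the previous step gives that $\dim_k\bigl(V/(V\cap\Ad(g)\kg(\co))\bigr)$ is constant on $Z$, so the spaces $V/(V\cap\Ad(g)\kg(\co))$ organise into a vector bundle $\tilde V/\tilde V\cap\ckk$ over $Z$. Since the unipotent radical $N$ of the maximal parabolic $P$ is abelian, $X\mapsto (1+X)\cdot z$ identifies $\kn(F)/(\kn(F)\cap\Ad(g_z)\kg(\co))$ with the fibre of $f_P$ over $z$; assembling these identifications exhibits $S_P^c(v)$ as a torsor under $\tilde V/\tilde V\cap\ckk$ over $Z$, compatibly with $f_P$. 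An affine bundle over a variety being Zariski-locally trivial, this is the asserted fibration in affine spaces; restricting over the Schubert cell $I^MwK^M/K^M\subset Z$, itself an affine space, gives the statement for $IwK/K$.

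I expect the main obstacle to be the fibrewise computation of the second paragraph: one must verify that membership of $uwK$ in $\sch(v)$ genuinely decouples into independent conditions on $u_N$ and on $u'$, with the $u_N$-part a product of pure valuation bounds and no cross-terms. This is a quantitative statement about how the level $c$ separates the indices of $J_P$ from those of $\bar J_P$, and it is where the whole construction is designed to bite. As a secondary point one must still check that the fibre dimension is constant over all of $Z$ and not merely over a single cell, presumably by comparing adjacent cells through the order $\prec_I$.
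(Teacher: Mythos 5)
Your overall architecture --- identify the fibres of $f_P$ with quotients $V/(V\cap\Ad(g)\kg(\co))$ for a lattice $V$ in $\lie(N(F))$, prove the dimension is constant, and invoke le lemme-définition \ref{quotvect} to exhibit $S_P^c(v)$ as a torsor under a vector bundle --- is exactly the paper's. But you defer, and in fact misplace, the two points that carry the content. First, the ``decoupling'' you single out as the main obstacle is not an issue for this lemma: $S_P^c(v)$ is by definition a union of full $I$-orbits, and for $w\in R_P^c(v)$ one has $IwK/K=(N(F)\cap I)\,I^MwK/K$ because the lower factor $\bar N(F)\cap I$ fixes $wK$ (its $(j',j)$-entries, $j\in J_P$, $j'\in\bar J_P$, have valuation $\geq 0>w_{j'}-w_j$ since $w_j>c>w_{j'}$). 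Hence the fibre of $f_P|_{S_P^c(v)}$ over $mwK^M$ is simply the orbit $(N(F)\cap I)mwK/K\cong \kn_I/(\kn_I\cap\Ad(mw)\kg(\co))$, where $\kn_I=\lie(N(F)\cap I)$ --- note that the lattice must be $\kn_I$ itself, not ``a large enough $\co$-lattice in $\kn(F)$'', otherwise the quotient no longer matches the fibre. The cross-term phenomenon of l'exemple \ref{typique} that worries you only appears later, in la proposition \ref{tranche}, where one intersects with the cells of a different Iwahori $I_{\ba}$.

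Second, what you call a ``secondary point'' --- the constancy of the fibre dimension over all of $S_P^c(v)\cap\xx^M$, i.e.\ as $w$ ranges over $R_P^c(v)$ --- is the actual heart of the proof, and the mechanism you propose for it (``comparing adjacent cells through the order $\prec_I$'') is not how it works and gives no reason for it to hold. The dimension in question is $\sum_{\alpha\in\Phi(N,T)}\bigl(\alpha(w)+\lfloor\alpha(x_0)\rfloor\bigr)$, and $\sum_{\alpha\in\Phi(N,T)}\alpha(w)$ equals $d\varpi_P(w)$ up to a constant depending only on the connected component; this is constant on $R_P^c(v)$ precisely because the condition $\varpi_P(w)=\varpi_i(v)$ is built into the definition of $R_P^c(v)$. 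Without identifying this one-line computation, your argument does not close.
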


\begin{proof}

Par définition, on a
$$
S_{P}^{c}(v)\cap \xx^{M}=\bigcup_{w\in R_{P}^{c}(v)}I^{M}wK/K.
$$

On note $N_{I}=N(F)\cap I$. Pour $mwK\in S_{P}^{c}(v)\cap \xx^{M},\, m\in I^{M}$, on a
$$
\frac{\kn_{I}}{\kn_{I}\cap \Ad(mw)\kg(\co)}\cong \frac{\kn_{I}}{\kn_{I}\cap \Ad(w)\kg(\co)},
$$
car $I^{M}$ normalise $\kn_{I}$. Parce que $w\in R_{P}^{c}(v)$, la dimension de la dernier terme est
\begin{eqnarray*}
\sum_{\alpha\in \Phi(N,T)}(\alpha(w)+\lfloor\alpha(x_{0})\rfloor)&=&d\varpi_{P}(w)+\sum_{\alpha\in \Phi(N,T)}(\lfloor\alpha(x_{0})\rfloor)\\
&=&d\varpi_{i}(v)+\sum_{\alpha\in \Phi(N,T)}(\lfloor\alpha(x_{0})\rfloor)
\end{eqnarray*}
où $x_{0}=(1,(d-1)/d,\cdots,2/d,1/d)\in \kt$ et $\varpi_{i}$ est l'unique poids fondamental dans l'orbite $W\varpi_{P}$, donc la dimension est constante pour $v\in R_{P}^{c}(v)$. Donc les dimensions de $\kn_{I}/\kn_{I}\cap \Ad(g)\kg(\co)$ sont constantes pour $gK\in S_{P}^{c}(v)\cap \xx^{M}$. D'après le lemme \ref{quotvect}, ils s'organisent en un fibré vectoriel $\tilde{\kn}_{I}/\tilde{\kn}_{I}\cap \ckk$.

Par l'isomorphisme canonique
$$
f_{P}^{-1}(mwK)\cong \frac{\kn_{I}}{\kn_{I}\cap \Ad(mw)\kg(\co)},
$$
on obtient que $S_{P}^{c}(v)$ est un $\tilde{\kn}_{I}/\tilde{\kn}_{I}\cap \ckk$-torseur sur $S_{P}^{c}(v)\cap \xx^{M}$, d'où la proposition.

\end{proof}

Pour $\ba=(a_{1},\cdots,a_{d})\in \bz^{d}$, on dit que $\ba$ est \emph{positif} par rapport à $P$ si 
$$
a_{i}-a_{j}>0,\quad\forall i\in J_{P},j\in \bar{J}_{P}.
$$ 
On dit qu'il est \emph{négatif} par rapport à $P$ si $-\ba$ est positif par rapport à $P$.

\begin{prop}\label{tranche}

Soit $v\in X_{*}^{+}(T)$ tel que $v_{1}\geq v_{2}=\cdots=v_{d-1}\geq v_{d}$. On prend un nombre $c\in \br\backslash \bz$ comme dans l'équation (\ref{choisirc}). Soient $P\in \cf(T)$ maximal, $P=MN$ sa factorisation de Levi. Soit $\ba\in \bz^{d}$ tel que $\ba$ est négatif par rapport à $P$.  Alors on a le pavage en espaces affines
$$
S_{P}^{c}(v)=\bigsqcup_{w\in R_{P}^{c}(v)}S_{P}^{c}(v)\cap I_{\ba}wK/K,
$$
l'inclusion $S_{P}^{c}(v)\cap I_{\ba}wK/K\subset \overline{S_{P}^{c}(v)\cap I_{\ba}w'K/K}$ implique que $w\prec_{I_{\ba}}w'$. De plus, on a l'intersection 
$$
S_{P}^{c}(v)\cap I_{\ba}wK/K=\big(N(F)\cap I\big)H wK/K,
$$ 
où $H=H_{1}\times H_{2}^{-1}$ avec

\begin{enumerate}

\item    $H_{1}$ est la sous-$k$-variété ouverte et fermée de $\gl_{J_{P}}(F)$ formée des matrices $(x_{i,j})$ telles que $x_{i,i}\in \co$ et que
\begin{eqnarray*}
\val(x_{i,j})\geq m_{i,j},\quad \forall i,j\in J_{P};\,i\neq j,\end{eqnarray*}
où $m_{i,j}=\max(a_{i}-a_{j}+\frac{i-j}{d},\,\lceil c\rceil-w_{j})$.

\item   $H_{2}$ est la sous-$k$-variété ouverte et fermée de $\gl_{\bar{J}_{P}}(F)$ formée des matrices $(x_{i,j})$ telles que $x_{i,i}\in \co$ et que
\begin{eqnarray*}
\val(x_{i,j})\geq \hat{m}_{i,j}, \quad \forall i,j\in \bar{J}_{P}; i\neq j,
\end{eqnarray*}
où $\hat{m}_{i,j}=\max(a_{i}-a_{j}+\frac{i-j}{d},\,-\lfloor c \rfloor+w_{i})$. 
\end{enumerate}

\end{prop}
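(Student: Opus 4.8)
### Plan de démonstration de la proposition \ref{tranche}

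Le point de départ est le lemme \ref{affine1}: la rétraction $f_{P}:S_{P}^{c}(v)\to S_{P}^{c}(v)\cap \xx^{M}$ est une fibration en espaces affines, et on a $S_{P}^{c}(v)\cap I_{\ba}wK/K = \bigl(N(F)\cap I\bigr)\cdot\bigl(S_{P}^{c}(v)\cap \xx^{M}\cap I_{\ba}wK/K\bigr)$ dès que $\ba$ est négatif par rapport à $P$ — il faut d'abord vérifier que sous cette hypothèse la décomposition de Bruhat-Tits relative à $I_{\ba}$ est compatible avec $f_{P}$, c'est-à-dire que $I_{\ba}wK/K$ est une réunion de fibres de $f_{P}$ au-dessus de cellules $I_{\ba}^{M}w'K/K$ (ceci vient du fait que, $\ba$ étant négatif, le co-caractère $\tlambda_{\ba}$ contracte $N(F)$, donc la limite qui définit la cellule de $I_{\ba}$ passe par le point $x_{P}$). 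On est alors ramené à paver $S_{P}^{c}(v)\cap \xx^{M}$, qui par le lemme \ref{affine1} s'écrit $\bigcup_{w\in R_{P}^{c}(v)} I^{M}wK/K$, avec $M=\gl_{J_{P}}\times \gl_{\bar{J}_{P}}$.

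La deuxième étape consiste à identifier $S_{P}^{c}(v)\cap \xx^{M}$ à un produit de variétés de Schubert affines pour les deux facteurs de Levi. Par définition de $R_{P}^{c}(v)$, pour $v'\in R_{P}^{c}(v)$ on a $v'_{j}>c$ pour $j\in J_{P}$ et $v'_{j}<c$ pour $j\in\bar J_{P}$, et la contrainte $v'\prec v$ avec $\varpi_{P}(v')=\varpi_{i}(v)$ se scinde: sur le bloc $J_{P}$ elle équivaut à $v'|_{J_P}\prec v^{J_P}$ pour un certain co-caractère dominant $v^{J_P}$ de $\gl_{J_P}$ dont toutes les composantes sauf au plus une sont égales à $\lceil c\rceil$, et dualement sur le bloc $\bar J_{P}$ on obtient $v'|_{\bar J_P}\prec v^{\bar J_P}$ avec $v^{\bar J_P}$ de co-caractère dont toutes les composantes sauf au plus une valent $\lfloor c\rfloor$. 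Autrement dit $S_{P}^{c}(v)\cap \xx^{M}$ est le produit $\sch^{\gl_{J_P}}(v^{J_P})\times \sch^{\gl_{\bar J_P}}(v^{\bar J_P})$, où la première variété de Schubert est du type « une coordonnée dominante, le reste constant » (justiciable du corollaire \ref{triangle1}) et la seconde du type « reste constant, une coordonnée petite » (justiciable du corollaire \ref{triangle2}).

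Il ne reste plus qu'à appliquer le corollaire \ref{triangle1} au facteur $H_1$ sur $\gl_{J_P}$ et le corollaire \ref{triangle2} au facteur $H_2$ sur $\gl_{\bar J_P}$, avec le même $\ba$ (restreint aux blocs correspondants). Le corollaire \ref{triangle1} donne $\sch^{\gl_{J_P}}(v^{J_P})\cap I_{\ba}^{J_P}wK/K = J_{\ba,v^{J_P},w}wK/K$ avec seuils $\max(a_i-a_j+\frac{i-j}{d},\,(v^{J_P})_{\min}-w_j)$, et l'on vérifie que $(v^{J_P})_{\min}=\lceil c\rceil$, d'où le $m_{i,j}$ annoncé pour $H_1$; le corollaire \ref{triangle2} donne symétriquement, sur le bloc $\bar J_P$, des seuils $\max(a_i-a_j+\frac{i-j}{d},\,-(v^{\bar J_P})_{\max}+w_i)$ avec $(v^{\bar J_P})_{\max}=\lfloor c\rfloor$, d'où le $\hat m_{i,j}$ pour $H_2$. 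Enfin, en combinant la fibration en $N(F)\cap I$ de la première étape avec ces deux pavages on obtient $S_{P}^{c}(v)\cap I_{\ba}wK/K = (N(F)\cap I)\,H\,wK/K$ avec $H=H_1\times H_2^{-1}$, chaque pavé étant un espace affine standard; la relation d'incidence $w\prec_{I_{\ba}}w'$ se déduit de celles des corollaires \ref{triangle1} et \ref{triangle2} et de la platitude de la fibration.

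L'obstacle principal est la première étape: il faut établir proprement que, $\ba$ étant négatif par rapport à $P$, la stratification de Bruhat-Tits pour $I_{\ba}$ est subordonnée à la rétraction $f_{P}$, c'est-à-dire que $\tlambda_{\ba}$ agit de façon contractante sur le radical unipotent $N$ (et de façon « neutre modulo $M$ » sur la limite), de sorte que $f_P^{-1}(x_P)\cap I_\ba wK/K$ soit exactement la fibre entière au-dessus de la cellule $I^M_{\ba}(x_P)$ — c'est là que sert le choix du signe de $\ba$, et c'est ce qui permet de réduire un pavage de $\xx$ à deux pavages pour des $\gl$ de rang plus petit.
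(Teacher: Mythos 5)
Votre démonstration suit essentiellement la même voie que celle de l'article : vous utilisez la contraction de $N(F)\cap I$ par le co-caractère $\tlambda_{\ba}$ (grâce à la négativité de $\ba$ par rapport à $P$) pour montrer que la limite définissant la cellule de $I_{\ba}$ se factorise par la rétraction $f_{P}$, puis vous identifiez $S_{P}^{c}(v)\cap \xx^{M}$ à un produit de variétés du type des corollaires \ref{triangle1} et \ref{triangle2} sur les blocs $J_{P}$ et $\bar{J}_{P}$, ce qui est exactement la réduction de l'article. Le point que vous signalez comme « obstacle principal » est précisément celui que l'article traite par le calcul $\lim_{t\to 0}\tlambda_{\ba}(t)ux=\lim_{t\to 0}[\Ad(\tlambda_{\ba}(t))u]\tlambda_{\ba}(t)x=\lim_{t\to 0}\tlambda_{\ba}(t)x$, et votre argument est correct.
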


\begin{proof}

Soit $\tlambda_{\ba}\in X_{*}(\mt)$ le co-caractère défini par l'équation (\ref{tlam}). Pour $w\in X_{*}(T)$, on a
$$
I_{\ba}wK/K=\{x\in \xx\,\mid\,\lim_{t\to 0}\tlambda_{\ba}(t)x=wK\}.
$$
Donc 
$$
S_{P}^{c}(v)\cap I_{\ba}wK/K=\{x\in S_{P}^{c}(v)\,\mid\,\lim_{t\to 0}\tlambda_{\ba}(t)x=wK\}.
$$
Pour $u\in N(F)\cap I,\,x\in S_{P}^{c}(v)$, on a 
$$
\lim_{t\to 0}\tlambda_{\ba}(t)ux=\lim_{t\to 0}[\Ad(\tlambda_{\ba}(t))u]\tlambda_{\ba}(t)x=\lim_{t\to 0}\tlambda_{\ba}(t)x,
$$
car $\ba$ est négatif par rapport à $P$, d'où l'égalité
\begin{equation}\label{limitiwahori}
\lim_{t\to 0}\tlambda_{\ba}(t)x=\lim_{t\to 0}\tlambda_{\ba}(t)[f_{P}(x)].
\end{equation}
Cette égalité implique que le pavage non standard se factorise par la fibration $f_{P}:S_{P}^{c}(v)\to S_{P}^{c}(v)\cap \xx^{M}$. Plus précisément,
\begin{equation}\label{reducible}
S_{P}^{c}(v)\cap I_{\ba}wK/K=(N(F)\cap I)\cdot [(S_{P}^{c}(v)\cap \xx^{M})\cap I^{M}_{\ba}wK/K].
\end{equation}
Remarquons que 
\begin{eqnarray*}
S_{P}^{c}(v)\cap \xx^{M}&=&\bigcup_{w\in R_{P}^{c}(v)}I^{M}wK^{M}/K^{M}\\
&=&\xx^{\gl_{J_{P}},(n_{1})}_{\geq \lceil c\rceil}\times \xx^{\gl_{\bar{J}_{P}},(n_{2})}_{\leq \lfloor c\rfloor},
\end{eqnarray*}
pour certaines indices $n_{1},n_{2}\in \bz$.

Donc la proposition se ramène aux corollaires \ref{triangle1} et \ref{triangle2}, pour $\xx^{\gl_{J_{P}},(n_{1})}_{\geq \lceil c\rceil}$ et $\xx^{\gl_{\bar{J}_{P}},(n_{2})}_{\leq \lfloor c\rfloor}$ respectivement.
\end{proof}

\begin{cor}\label{affine2}

Même hypothèse que la proposition précédente. La rétraction 
$$
f_{P}: S_{P}^{c}(v)\cap I_{\ba}wK/K \to S_{P}^{c}(v)\cap I^{M}_{\ba}wK^{M}/K^{M}
$$
est une fibration en espaces affines.

\end{cor}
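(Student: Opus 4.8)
Le plan est de combiner la fibration en espaces affines du lemme \ref{affine1} avec la description explicite du pavage obtenue dans la proposition \ref{tranche}.

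D'abord, je remarquerais que l'équation (\ref{reducible}) figurant dans la preuve de la proposition \ref{tranche} identifie la sous-variété $S_{P}^{c}(v)\cap I_{\ba}wK/K$ à l'image réciproque par $f_{P}$ de la sous-variété localement fermée $S_{P}^{c}(v)\cap I^{M}_{\ba}wK^{M}/K^{M}$ de $S_{P}^{c}(v)\cap\xx^{M}$; en particulier, le but de la restriction de $f_{P}$ à $S_{P}^{c}(v)\cap I_{\ba}wK/K$ est bien $S_{P}^{c}(v)\cap I^{M}_{\ba}wK^{M}/K^{M}$.

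Ensuite, d'après le lemme \ref{affine1}, $S_{P}^{c}(v)$ est un torseur sous le fibré vectoriel $\tilde{\kn}_{I}/\tilde{\kn}_{I}\cap\ckk$ au-dessus de $S_{P}^{c}(v)\cap\xx^{M}$. En restreignant ce torseur à la sous-variété localement fermée $S_{P}^{c}(v)\cap I^{M}_{\ba}wK^{M}/K^{M}$, on obtiendrait que $S_{P}^{c}(v)\cap I_{\ba}wK/K$ est un torseur sous la restriction de $\tilde{\kn}_{I}/\tilde{\kn}_{I}\cap\ckk$, c'est-à-dire une fibration en espaces affines; comme la base $S_{P}^{c}(v)\cap I^{M}_{\ba}wK^{M}/K^{M}$ est elle-même un espace affine d'après la proposition \ref{tranche}, ce torseur serait de surcroît trivial et l'espace total serait un espace affine.

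La seule vérification non immédiate est la constance, sur toute la base, de la dimension des fibres $f_{P}^{-1}(mwK)\cong \kn_{I}/(\kn_{I}\cap\Ad(mw)\kg(\co))$, requise pour invoquer le lemme \ref{quotvect}; mais cela est déjà contenu dans la preuve du lemme \ref{affine1}, où l'on constate que cette dimension ne dépend que de $\varpi_{P}(w)=\varpi_{i}(v)$, qui est constant sur $R_{P}^{c}(v)$. Il n'y a donc pas d'obstacle sérieux ici: l'essentiel du travail a déjà été accompli dans la proposition \ref{tranche}.
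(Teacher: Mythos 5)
Votre démonstration est correcte et suit essentiellement la même voie que celle de l'article~: on utilise le fait que le pavage non standard se factorise par la rétraction $f_{P}$ (équation (\ref{reducible})), de sorte que $S_{P}^{c}(v)\cap I_{\ba}wK/K$ est exactement l'image réciproque de $S_{P}^{c}(v)\cap I^{M}_{\ba}wK^{M}/K^{M}$ par la fibration en espaces affines du lemme \ref{affine1}. Les précisions que vous ajoutez (constance de la dimension des fibres, trivialité du torseur au-dessus d'une base affine) sont déjà contenues dans le lemme \ref{affine1} et la proposition \ref{tranche}.
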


\begin{proof}
C'est parce que le pavage non standard factorise par la fibration $f_{P}:S_{P}^{c}(v)\to S_{P}^{c}(v)\cap \xx^{M}$, qui est une fibration en espaces affines par le lemme \ref{affine1}. 

\end{proof}

En conclusion, soient $v\in X_{*}^{+}(T)$ tel que $v_{1}\geq v_{2}=\cdots=v_{d-1}\geq v_{d}$, $c\in \br\backslash \bz$ un nombre comme dans l'équation (\ref{choisirc}). \'Etant donné $\ba_{P}\in \bz^{d}$ pour tout $P\in \cf(T)$ maximal tel que $\ba_{P}$ est négatif par rapport à $P$, on peut paver $S_{P}^{c}(v)$ en espaces affines avec l'Iwahori $I_{\ba_{P}}$ d'après la proposition \ref{tranche}. De cette manière, on construit un pavage non standard de 
$S(v)$. Comme on a remarqué, cette processus peut être continué sur $\sch(\bar{v})$ avec autre paramètre $c\in \br$. Par récurrence, on construit un pavage non standard de $\sch(v)$, on l'appelle \emph{le pavage en tranches} de $\sch(v)$.

\section{Application aux pavages de la fibre de Springer affine}

Soit $\gamma\in \kg(F)$ un élément semi-simple régulier entier, on va utiliser les pavages non standard pour paver la fibre de Springer affine $\xx_{\gamma}$. La fibre de Springer affine n'est pas réduite comme un schéma, mais on va travailler avec sa structure réduite puisque on s'intéresse qu'à sa cohomologie étale.

\subsection{Une proposition technique}

Prenons un sous-groupe parabolique maximal $P=MN\in \cf(T)$. Soit $\gamma\in \km(F)\subset \kg(F)$ un élément semi-simple régulier entier. Soient $H\subset M(F),\,U\subset N(F)$ des sous-groupes ouverts et fermés.

\begin{lem}\label{tech0}

Considérons la rétraction
$$
f_{P}:U\xx_{\gamma}^{M}\cap \xx_{\gamma}\to \xx^{M}_{\gamma}.
$$
Soit $gK^{M}\in \xx^{M}_{\gamma}$, alors $f_{P}^{-1}(gK^{M})$ est isomorphe canoniquement à
$$
\ker\left\{\ad(\gamma):\,\frac{\ku}{\ku\cap \Ad(g)\kg(\co)}\to \frac{\ad(\gamma)\ku}{\ad(\gamma)\ku \cap \Ad(g)\kg(\co)}\right\},
$$ 
\end{lem}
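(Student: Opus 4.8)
The plan is to describe the fibre of the retraction in matrix coordinates on $N(F)$ and to match the equation cutting out $\xx_\gamma$ with the kernel in the statement. The argument rests on the observation that, $P$ being a \emph{maximal} parabolic of $G=\gl_d$, its unipotent radical $N$ is abelian: one has $\Phi(N,T)=\{\alpha_{i,j}:i\in J_P,\ j\in\bar J_P\}$, and $[\kg_{\alpha_{i,j}},\kg_{\alpha_{k,l}}]=0$ whenever $i,k\in J_P$ and $j,l\in\bar J_P$ (since then $j\neq k$ and $l\neq i$). Consequently $Y\mapsto 1+Y$ is an isomorphism of $k$-group schemes from $(\kn_F,+)$ onto $N(F)$, with $(1+Y)^{-1}=1-Y$, and it carries $\ku:=\lie(U)$ isomorphically onto $U$.

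First I would identify the source of the map. Every point of $U\xx_\gamma^M$ lying over $gK^M$ is of the form $ugK$ with $u\in U$ (indeed $g\in M(F)$, so $f_P(ugK)=gK^M$), and $ugK=u'gK$ exactly when $u^{-1}u'\in\Ad(g)K\cap N(F)$. Since $g\in M(F)$ normalises $N(F)$, we get $\Ad(g)K\cap N(F)=\Ad(g)\bigl(K\cap N(F)\bigr)=\Ad(g)N(\co)=\{1+Z:Z\in\Ad(g)\kg(\co)\cap\kn_F\}$; writing $u=1+Y$ and $u'=1+Y'$ with $Y,Y'\in\ku$, this means $Y'-Y\in\ku\cap\Ad(g)\kg(\co)$. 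Hence $Y\mapsto ugK$ identifies $f_P^{-1}(gK^M)\cap U\xx_\gamma^M$ with the affine space $\ku\big/\bigl(\ku\cap\Ad(g)\kg(\co)\bigr)$.

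Next I would transport the defining condition of $\xx_\gamma$. Since $\gamma\in\km(F)$ and $\kn_F$ is abelian,
$$
\Ad(u^{-1})\gamma=(1-Y)\gamma(1+Y)=\gamma+[\gamma,Y]-Y\gamma Y=\gamma+\ad(\gamma)Y,
$$
the term $Y\gamma Y$ vanishing because $\gamma Y\in\kn_F$. As $gK^M\in\xx_\gamma^M$ we have $\Ad(g^{-1})\gamma\in\km(\co)$, hence $\gamma\in\Ad(g)\km(\co)\subset\Ad(g)\kg(\co)$, so that
$$
\Ad\bigl((ug)^{-1}\bigr)\gamma=\Ad(g^{-1})\gamma+\Ad(g^{-1})\ad(\gamma)Y
$$
has first summand already in $\km(\co)$; thus $ugK\in\xx_\gamma$ if and only if $\ad(\gamma)Y\in\Ad(g)\kg(\co)$. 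This condition depends only on the class of $Y$ modulo $\ku\cap\Ad(g)\kg(\co)$, since $\Ad(g)\kg(\co)$ is stable under $\ad(\gamma)$; in particular $\ku\cap\Ad(g)\kg(\co)$ is contained in $\{Y\in\ku:\ad(\gamma)Y\in\Ad(g)\kg(\co)\}$.

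Finally I would recognise the kernel. The map
$$
\ad(\gamma)\colon\frac{\ku}{\ku\cap\Ad(g)\kg(\co)}\longrightarrow\frac{\ad(\gamma)\ku}{\ad(\gamma)\ku\cap\Ad(g)\kg(\co)}
$$
is well defined for the same reason, and the class of $Y$ lies in its kernel exactly when $\ad(\gamma)Y\in\ad(\gamma)\ku\cap\Ad(g)\kg(\co)$, that is — membership in $\ad(\gamma)\ku$ being automatic — exactly when $\ad(\gamma)Y\in\Ad(g)\kg(\co)$. Combining the last two paragraphs, $f_P^{-1}(gK^M)$ is cut out inside $\ku/(\ku\cap\Ad(g)\kg(\co))$ by precisely this kernel condition, which gives the claimed canonical isomorphism; canonicity holds because every lattice occurring depends only on $gM(\co)=f_P(x)$, not on the representative $g$. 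The only step that really needs care is the exact identity $\Ad(u^{-1})\gamma=\gamma+\ad(\gamma)Y$, which is where the abelianness of the unipotent radical of a maximal parabolic of $\gl_d$ is used; granting it, the remainder is routine bookkeeping with $\co$-lattices.
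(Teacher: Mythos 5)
Your argument is correct and is essentially the paper's own proof: parametrise the fibre by $1+Y$ with $Y\in\ku$, use the abelianness of $N$ (forced by the maximality of $P$) to get $\Ad\bigl((1+Y)^{-1}\bigr)\gamma=\gamma-[Y,\gamma]$, and use $\gamma\in\Ad(g)\kg(\co)$ to reduce the condition $ugK\in\xx_\gamma$ to $\ad(\gamma)Y\in\Ad(g)\kg(\co)$, i.e.\ to the kernel condition. The paper records only this three-line equivalence; your version additionally spells out the identification of the fibre of $f_P$ with $\ku/\bigl(\ku\cap\Ad(g)\kg(\co)\bigr)$ and the well-definedness of the maps, which the paper leaves implicit.
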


\begin{proof}

Pour $gK^{M}\in \xx_{\gamma}^{M}$, soit $u\in \ku$, alors 
\begin{eqnarray*}
(1+u)gK\in \xx_{\gamma}&\iff&\Ad(1+u)^{-1}\gamma\in \Ad(g)\kg(\co)\\
&\iff& \gamma-[u,\,\gamma]\in \Ad(g)\kg(\co)\\
&\iff& [u,\,\gamma]\in \Ad(g)\kg(\co),
\end{eqnarray*}
d'ou le lemme (dans la deuxième ligne on utilise le fait que $P$ est maximal). \end{proof}

\begin{prop}\label{techfam}

Soit $X$ une sous-variété de $\xx^{M}_{\gamma}$. On suppose que les dimensions 
$$
\dim_{k}\left(\frac{\ku}{\ku\cap \Ad(g)\kg(\co)}\right),\quad \dim_{k}\left(\frac{\ad(\gamma)\ku}{\ad(\gamma)\ku \cap \Ad(g)\kg(\co)}\right),
$$ 
sont indépendantes de $g$ pour tout $gK\in X$. Alors la rétraction
$$
f_{P}:UX\cap \xx_{\gamma}\to X
$$
est une fibration en espaces affines. 
\end{prop}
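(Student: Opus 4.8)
The plan is to deduce Proposition \ref{techfam} from the fibrewise description of Lemma \ref{tech0} together with the vector-bundle machinery of Lemma-Définition \ref{quotvect}. First I would record that, by Lemma \ref{tech0}, for each $gK^{M}\in X$ the fibre $f_{P}^{-1}(gK^{M})$ is the kernel of the linear map induced by $\ad(\gamma)$ from $\ku/(\ku\cap\Ad(g)\kg(\co))$ to $\ad(\gamma)\ku/(\ad(\gamma)\ku\cap\Ad(g)\kg(\co))$; in particular each fibre is an affine space, and its dimension is the difference of the two dimensions appearing in the statement minus the rank of that map. So the content of the proposition is that these fibres vary in an algebraic family which is Zariski-locally trivial, i.e. that the kernel organizes into a vector bundle over $X$.

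Next I would set up two vector bundles over $X$ using Lemma \ref{quotvect}. Apply that lemma with $V=\ku$ (a finitely generated $\co$-submodule of $\kg_{F}$): the hypothesis that $\dim_{k}(\ku/\ku\cap\Ad(g)\kg(\co))$ is independent of $gK\in X$ is exactly one of the assumptions, so we get a vector bundle $\tilde{\ku}/\tilde{\ku}\cap\ckk$ on $X$. Apply it again with $V=\ad(\gamma)\ku$ (still a finitely generated $\co$-module, since $\gamma$ is integral and $N$ is a group, $\ad(\gamma)\ku\subset\kg_{F}$), using the second independence hypothesis, to get a vector bundle $\widetilde{\ad(\gamma)\ku}/\widetilde{\ad(\gamma)\ku}\cap\ckk$ on $X$. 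The element $\gamma$, acting by $\ad(\gamma)$, induces a morphism of vector bundles
$$
\Phi:\ \tilde{\ku}/\tilde{\ku}\cap\ckk\ \longrightarrow\ \widetilde{\ad(\gamma)\ku}/\widetilde{\ad(\gamma)\ku}\cap\ckk
$$
over $X$, which on the fibre over $gK^{M}$ is precisely the map of Lemma \ref{tech0}.

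Then I would argue that $\ker\Phi$ is a subvector bundle of $\tilde{\ku}/\tilde{\ku}\cap\ckk$. Since $\Phi$ is a morphism of vector bundles over $X$, its kernel is a subsheaf whose fibre dimension is upper semicontinuous; but the fibre of $\ker\Phi$ over $gK^{M}$ is $f_{P}^{-1}(gK^{M})$, and by the discussion above its dimension equals $\dim_{k}(\ku/\ku\cap\Ad(g)\kg(\co))-\mathrm{rk}\,\Phi_{gK^{M}}$. The first term is constant on $X$ by hypothesis; hence $\mathrm{rk}\,\Phi_{gK^{M}}$ is lower semicontinuous, while being the rank of a morphism it is also upper semicontinuous, so it is locally constant, and therefore $\dim\ker\Phi_{gK^{M}}$ is locally constant. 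A morphism of vector bundles of locally constant rank has kernel a subvector bundle (one can work Zariski-locally where $\tilde{\ku}/\tilde{\ku}\cap\ckk$ is trivial and $\Phi$ has constant rank, then split off the kernel). Finally, the canonical isomorphisms $f_{P}^{-1}(mwK)\cong \ku/(\ku\cap\Ad(mw)\kg(\co))$, restricted to the locus $UX\cap\xx_{\gamma}$ cut out by the condition $[u,\gamma]\in\Ad(g)\kg(\co)$, identify $UX\cap\xx_{\gamma}$ with the total space of $\ker\Phi$ over $X$; this is a vector bundle, hence a fibration in affine spaces, which is what is claimed.

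The main obstacle is the bookkeeping needed to make the identification of $UX\cap\xx_{\gamma}$ with the total space of $\ker\Phi$ genuinely algebraic and global rather than merely fibrewise: one must check that the isomorphism $f_{P}^{-1}(mwK)\cong\ku/(\ku\cap\Ad(mw)\kg(\co))$ used in Lemma \ref{affine1} is the restriction of the trivialization of a fixed affine-space bundle over $X$ coming from a truncation $\vep^{N}\kg(\co)\subset\Ad(g)\kg(\co)\subset\vep^{-N}\kg(\co)$ valid for all $gK\in X$ (possible since $X$ is of finite type), and that under this truncation the defining condition $[u,\gamma]\in\Ad(g)\kg(\co)$ becomes a linear condition cutting out exactly the subbundle $\ker\Phi$. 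Once this is done, the local-constancy-of-rank argument, hence the subvector bundle statement, is routine given Lemmata \ref{quotvect} and \ref{tech0}.
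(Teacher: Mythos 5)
Your overall architecture is the same as the paper's: apply Lemma \ref{quotvect} to $V=\ku$ and to $V=\ad(\gamma)\ku$ to get two vector bundles on $X$, let $\Phi$ be the bundle morphism induced by $\ad(\gamma)$, identify the fibres of $f_{P}$ over $X$ with the fibres of $\ker\Phi$ via Lemma \ref{tech0}, and conclude once $\ker\Phi$ is known to be a subbundle. The gap is in that last step. You assert that the rank of $\Phi$, ``being the rank of a morphism, is also upper semicontinuous'': this is false. For a morphism of vector bundles the rank is \emph{lower} semicontinuous (the locus where some $r\times r$ minor is nonzero is open) --- which is exactly what you had already derived from the constancy of the source rank, so you have established lower semicontinuity twice and nothing forces local constancy. (Think of multiplication by a coordinate function on the trivial line bundle over $\baa^{1}$: rank $0$ at the origin, rank $1$ elsewhere.) As written, the conclusion that $\ker\Phi$ has locally constant fibre dimension, hence is a subbundle, does not follow.

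The missing observation --- the one the paper uses --- is that $\Phi$ is fibrewise \emph{surjective}: every element of $\ad(\gamma)\ku$ is of the form $\ad(\gamma)u$ with $u\in\ku$, so the induced map
$$
\frac{\ku}{\ku\cap\Ad(g)\kg(\co)}\longrightarrow\frac{\ad(\gamma)\ku}{\ad(\gamma)\ku\cap\Ad(g)\kg(\co)}
$$
is onto for every $g$. Hence $\mathrm{rk}\,\Phi_{gK^{M}}$ equals the rank of the target bundle, which is constant on $X$ by your second hypothesis; therefore $\ker\Phi$ has constant rank and is a vector bundle, being the kernel of a surjection of vector bundles. With this one-line substitution your argument closes and coincides with the paper's proof. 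Your final paragraph on globalizing the fibrewise identification of $UX\cap\xx_{\gamma}$ with the total space of $\ker\Phi$ is a reasonable elaboration of what the paper summarizes as ``la fibration en question est un $\ckk_{\ku,\gamma}$-torseur''.
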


\begin{proof}

D'après le lemme \ref{quotvect}, les $\ku/\ku\cap \Ad(g)\kg(\co),\, gK\in X$, s'organisent en un fibré vectoriel $\tilde{\ku}/\tilde{\ku}\cap \ckk$. Il en est de même pour $\ku':=\ad(\gamma)\ku$. Donc le noyau
$$\ckk_{\ku,\gamma}:=\ker\left\{\ad(\gamma):\,\frac{\tilde{\ku}}{\tilde{\ku}\cap \ckk}\to \frac{\tilde{\ku}'}{\tilde{\ku}'\cap \ckk}\right\}
$$
est un fibré vectoriel sur $X$ car le morphisme est surjectif. D'après le lemme \ref{tech0}, la fibration en question est un $\ckk_{\ku, \gamma}$-torseur, donc elle est une fibration en espaces affines.
\end{proof}

\begin{rem}
La condition que la dimension de $\ku/\ku\cap \Ad(g)\kg(\co)$ est indépendante de $g$ pour tout $gK\in X$ est équivalent à la condition que la rétraction
$$
f_{P}:UX\to X
$$
est une fibration en espaces affines. Dans la suite, on utilise aussi cette condition alternative.
\end{rem}

\begin{lem}\label{normal}

Supposons que $H$ normalise $U$. Soit $X$ une $H$-orbite dans $\xx^{M}$, alors la dimension $\ku/\ku\cap \Ad(g)\kg(\co)$ est indépendante de $g$ pour $gK\in X$.
\end{lem}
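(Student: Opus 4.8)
The strategy is to exploit the hypothesis that $H$ normalizes $U$ in order to transport the $\co$-module $\ku$ along the $H$-action. Fix two points $gK, hgK \in X$ with $h \in H$; since $X$ is an $H$-orbit, it suffices to compare $\ku/(\ku \cap \Ad(g)\kg(\co))$ with $\ku/(\ku \cap \Ad(hg)\kg(\co))$. The key observation is that conjugation by $h$ gives a linear automorphism $\Ad(h): \kg_F \to \kg_F$ which preserves $\ku$ (because $H$ normalizes $U$, hence $\Ad(h)\ku = \ku$) and which sends $\Ad(g)\kg(\co)$ to $\Ad(hg)\kg(\co)$. Therefore $\Ad(h)$ induces an isomorphism of $k$-vector spaces
$$
\frac{\ku}{\ku \cap \Ad(g)\kg(\co)} \;\xrightarrow{\ \sim\ }\; \frac{\Ad(h)\ku}{\Ad(h)\ku \cap \Ad(hg)\kg(\co)} \;=\; \frac{\ku}{\ku \cap \Ad(hg)\kg(\co)},
$$
so in particular the two dimensions agree.

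First I would make precise the meaning of ``$H$ normalizes $U$'' at the level of Lie algebras, namely $\Ad(h)\ku = \ku$ for every $h \in H$; this is immediate from $hUh^{-1} = U$ by differentiating, or simply from $\Ad(h)\lie(U) = \lie(hUh^{-1})$. Next I would record the trivial identity $\Ad(h)\bigl(\Ad(g)\kg(\co)\bigr) = \Ad(hg)\kg(\co)$, which only uses that $\Ad$ is a group homomorphism. Combining these two facts, $\Ad(h)$ carries the pair $\bigl(\ku,\ \Ad(g)\kg(\co)\bigr)$ to the pair $\bigl(\ku,\ \Ad(hg)\kg(\co)\bigr)$, hence carries $\ku \cap \Ad(g)\kg(\co)$ isomorphically onto $\ku \cap \Ad(hg)\kg(\co)$ and descends to an isomorphism on the quotients. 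Finally, since any point of $X$ is of the form $hgK$ for a fixed base point $gK \in X$ and varying $h \in H$, the dimension $\dim_k\bigl(\ku/(\ku \cap \Ad(g')\kg(\co))\bigr)$ is the same for all $g'K \in X$.

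I do not expect a genuine obstacle here: the statement is essentially a bookkeeping consequence of equivariance, and the only mild care needed is to check that all the modules in sight are finitely generated over $\co$ so that the quotients are finite-dimensional over $k$ (this follows because $\ku$ is a finitely generated $\co$-module and $\Ad(g)\kg(\co)$ contains $\vep^N \kg(\co)$ for $N$ large). The lemma will then feed into Proposition \ref{techfam} by supplying the dimension-constancy hypothesis for the first of the two quotients appearing there (and, applying the same argument to $\ku' = \ad(\gamma)\ku$ when $H$ also normalizes $U' $, or by a separate argument, for the second).
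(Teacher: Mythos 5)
Your argument is exactly the one the paper gives: since $H$ normalise $U$, $\Ad(h)$ fixe $\ku$ et envoie $\Ad(g)\kg(\co)$ sur $\Ad(hg)\kg(\co)$, d'où l'isomorphisme des quotients et la constance de la dimension sur l'orbite. Le papier se contente d'énoncer cet isomorphisme en une ligne; votre rédaction ne fait qu'expliciter les mêmes étapes.
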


\begin{proof}

Fixe $gK^{M}\in X$, pour tout $h\in H$, on a 
$$
\frac{\ku}{\ku\cap \Ad(hg)\kg(\co)}\cong \frac{\ku}{\ku\cap \Ad(g)\kg(\co)},$$
car $H$ normalise $U$, d'où le lemme.
\end{proof}

\begin{cor}\label{technormal}

Supposons que $H$ normalise les algèbres de Lie $\ku$ et $\ad(\gamma)\ku$. Soit $X$ une $H$-orbite dans $\xx^{M}$. Alors la rétraction 
$$
f_{P}:UX\cap \xx_{\gamma}\to X\cap \xx^{M}_{\gamma}
$$
est une fibration en espaces affines.\end{cor}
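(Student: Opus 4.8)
The statement to be proved is Corollaire~\ref{technormal}: under the hypothesis that $H$ normalizes $\ku$ and $\ad(\gamma)\ku$, and $X$ is an $H$-orbit in $\xx^{M}$, the retraction $f_{P}:UX\cap\xx_{\gamma}\to X\cap\xx^{M}_{\gamma}$ is a fibration in affine spaces. The idea is simply to verify that the hypotheses of Proposition~\ref{techfam} hold, using Lemme~\ref{normal} to take care of the dimension condition.

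\textbf{Step 1.} First I would observe that $X\cap\xx^{M}_{\gamma}$ is again an $H$-orbit inside $X$ (or a union of $H$-orbits; in any case it is $H$-stable since $\gamma\in\km(F)$ and $H\subset M(F)$, so $H$ preserves $\xx^{M}_{\gamma}$), so it suffices to run the argument with $X$ replaced by $X\cap\xx^{M}_{\gamma}$; more simply, one notes that only the $H$-orbit structure is used below, and one may as well assume $X\subset\xx^{M}_{\gamma}$ from the start by intersecting. Thus we are in the situation of Proposition~\ref{techfam} with $X\subset\xx^{M}_{\gamma}$.

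\textbf{Step 2.} Next I would check the two dimension hypotheses of Proposition~\ref{techfam}. For $\dim_{k}\bigl(\ku/\ku\cap\Ad(g)\kg(\co)\bigr)$: this is exactly the content of Lemme~\ref{normal}, whose hypothesis ``$H$ normalise $U$'' is implied by our assumption that $H$ normalizes the Lie algebra $\ku$ (the relevant groups being generated by their root subgroups, or more directly: $\Ad(h)$ carries $\ku$ to itself, hence carries $U$ to itself). For $\dim_{k}\bigl(\ad(\gamma)\ku/\ad(\gamma)\ku\cap\Ad(g)\kg(\co)\bigr)$: here I would apply Lemme~\ref{normal} again, but with $\ku$ replaced by the $\co$-lattice $\ku':=\ad(\gamma)\ku$. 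The hypothesis needed is that $H$ normalizes $\ku'$, which is precisely our second assumption. So both dimensions are independent of $g$ for $gK\in X$.

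\textbf{Step 3.} With both dimension conditions verified, Proposition~\ref{techfam} applies directly and yields that $f_{P}:UX\cap\xx_{\gamma}\to X$ is a fibration in affine spaces; since $X=X\cap\xx^{M}_{\gamma}$ after Step~1, this is exactly the assertion. I do not expect any serious obstacle here: the only subtlety is the bookkeeping in Step~1 (making sure one is allowed to intersect with $\xx^{M}_{\gamma}$ without losing the $H$-orbit property and without changing $UX\cap\xx_{\gamma}$, which holds because $f_{P}(UX\cap\xx_{\gamma})\subset\xx^{M}_{\gamma}$ automatically by Lemme~\ref{tech0} — a fiber over $gK^{M}\notin\xx^{M}_{\gamma}$ is empty). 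The genuinely non-formal inputs — the identification of the fiber as a kernel of $\ad(\gamma)$ and the fact that this kernel organizes into a vector bundle — are already packaged in Lemme~\ref{tech0} and Proposition~\ref{techfam}, so the corollary is a short deduction.
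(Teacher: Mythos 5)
Votre démonstration est correcte et suit exactement le chemin que le papier sous-entend (le corollaire y est énoncé sans preuve) : on applique le lemme \ref{normal} deux fois, à $\ku$ puis au réseau $\ad(\gamma)\ku$ — l'argument du lemme ne dépendant que de la normalisation de l'algèbre de Lie — pour vérifier les deux hypothèses de constance de dimension de la proposition \ref{techfam}, qu'on applique ensuite à $X\cap\xx^{M}_{\gamma}$ en observant, via le lemme \ref{tech0}, que les fibres au-dessus de $X\setminus\xx^{M}_{\gamma}$ sont vides. Seule petite réserve sans conséquence : $X\cap\xx^{M}_{\gamma}$ n'est pas nécessairement $H$-stable (car $H$ ne centralise pas $\gamma$ en général), mais votre argument n'utilise en réalité que la constance des dimensions sur cette sous-variété, héritée de leur constance sur l'orbite $X$ tout entière.
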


\subsection{Pavage pour $\gl_3$}

Dans cette section uniquement, on suppose que  $\mathrm{char}(k)>3$. Alors le tore maximal $Z_{G}(\gamma)$ est isomorphe soit à $F^{\times}\times F^{\times}\times F^{\times}$, soit à $F^{\times}\times F((\vep^{1/2}))^{\times}$, soit à $F((\vep^{1/3}))^{\times}$. On appelle $\gamma$ dans ces cas \emph{non-ramifié, mélangé, elliptique} respectivement.

Le pavage de $\xx_{\gamma}$ est connue dans les cas suivants:

\begin{thm}[Lucarelli]

Pour $\gamma$ non-ramifié, $\xx_{\gamma}$ admet un pavage en espaces affines.
\end{thm}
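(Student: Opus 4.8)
The plan is to deduce the paving from the non-standard slice pavings constructed above and the technical results of the preceding subsection, feeding in only the équivalué case of \cite{gkm1} for $\gl_{2}$ — recall that every regular semisimple element of $\ggl_{2}$ is équivalué, there being a single root up to sign. Since $\gamma$ is non-ramifié and entier, after conjugating by $\gl_{3}(F)$ I may assume $\gamma=\diag(\gamma_{1},\gamma_{2},\gamma_{3})\in\kt(\co)$ with the $\gamma_{i}$ pairwise distinct, so that $Z_{G}(\gamma)=T$ and $T(F)$ acts on $\xx_{\gamma}$. Every point of $\xx$ lies in some affine Schubert variety $\sch(v)$ with $v$ dominant, and for $d=3$ every dominant $v$ has the shape $v_{1}\geq v_{2}=\cdots=v_{d-1}\geq v_{d}$; by Lemme \ref{trivial1} there is $\bar v$ of the same shape with $\sch(v)=\sch(\bar v)\sqcup S(v)$, a disjoint union of Iwahori cells in which $\sch(\bar v)$ is closed of strictly smaller dimension and $S(v)$ is open. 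Iterating this, letting $v$ run over a cofinal chain in each connected component of $\xx$, and enumerating the components diagonally, it suffices to pave $S(v)\cap\xx_{\gamma}$ for all such $v$ and then to check that the resulting strata assemble into a single increasing exhaustive filtration by closed subsets.

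To pave $S(v)\cap\xx_{\gamma}$, fix $c\in\br\backslash\bz$ as in (\ref{choisirc}) and, for each maximal $P=MN\in\cf(T)$, a vector $\ba_{P}\in\bz^{d}$ négatif par rapport à $P$; these choices are made independently over the six maximal semi-standard $P$. By Proposition \ref{tranche} the slice $S_{P}^{c}(v)$ is paved by affine spaces via $I_{\ba_{P}}$, and by Lemme \ref{affine1} this paving factors through the affine fibration $f_{P}:S_{P}^{c}(v)\to S_{P}^{c}(v)\cap\xx^{M}$. For $\gl_{3}$ the Levi $M$ is isomorphic to $\gl_{1}\times\gl_{2}$, up to the order of the factors; the $\gl_{1}$-factor of $\xx^{M}$ is a point, so $S_{P}^{c}(v)\cap\xx^{M}$ is the product of a point with a truncated affine Grassmannian of $\gl_{2}$, itself paved by Iwahori cells via Corollaire \ref{triangle1} or \ref{triangle2}. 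Since $\gamma\in\km(F)$ one has $f_{P}(\xx_{\gamma})\subset\xx^{M}_{\gamma}$ (compare Lemme \ref{tech0}), whence
$$
S_{P}^{c}(v)\cap\xx_{\gamma}=\bigsqcup_{X}\big[(N(F)\cap I)\,X\cap\xx_{\gamma}\big],
$$
where $X$ ranges over the intersections with $\xx^{M}_{\gamma}$ of the $I^{M}_{\ba_{P}}$-cells of the paving of $S_{P}^{c}(v)\cap\xx^{M}$.

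Each such $X$ is an Iwahori cell of the $\gl_{2}$-affine Springer fiber of the $\gl_{2}$-block $\gamma'$ of $\gamma$, hence isomorphic to $\baa^{0}$ or $\baa^{1}$ by the équivalué case of \cite{gkm1} — applied after transport by $\vep^{\ba_{P}}$, which fixes the diagonal block $\gamma'$. Corollaire \ref{technormal}, with $H=I^{M}_{\ba_{P}}$ and $U=N(F)\cap I$, so that $H$ normalizes $\kn\cap I$ and $\ad(\gamma)(\kn\cap I)$, then shows that $(N(F)\cap I)X\cap\xx_{\gamma}\to X$ is an affine fibration; being a torsor under a vector bundle over an affine space, and vector bundles over $\baa^{n}$ being trivial, its total space is again an affine space. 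Summing over $X$, over the maximal $P$, and over $v$, we realize $\xx_{\gamma}$ as a disjoint union of affine spaces.

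The remaining, and main, difficulty is to order these strata so that the disjoint union becomes a paving, that is, an increasing exhaustive filtration by closed subsets. I would order by decreasing $v$ (the recursion of Lemme \ref{trivial1}); for fixed $v$, order the slices $S_{P}^{c}(v)$ as in Figures \ref{order1}--\ref{order2}; inside a slice, order by the Bruhat order on $W$ and then by the $\gl_{2}$ Bruhat--Tits order on the cells $X$. The closure bounds of Proposition \ref{tranche} and of Corollaires \ref{triangle1}--\ref{triangle2} give the required inclusions of closures into unions of lower strata within a single $S_{P}^{c}(v)$, and these persist after intersecting with the closed subvariety $\xx_{\gamma}$; the delicate point is to arrange the vectors $\ba_{P}$ and the global order so that these relations remain compatible across distinct $P$ and across the recursion, so that no closure escapes the current level of the filtration. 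This is precisely where one exploits, as explained in the introduction, the freedom to send each singular branch of a slice into either of two adjacent smooth slices.
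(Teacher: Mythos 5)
Un point de contexte d'abord : l'énoncé tel qu'il figure dans l'article est une \emph{citation} du résultat de Lucarelli \cite{v}, dont la démonstration originale procède tout autrement (pavage de Bruhat--Tits standard, puis regroupement et redécoupage des cellules singulières, comme dans l'exemple \ref{typique}). Votre proposition suit en réalité la route que l'article développe lui-même plus loin (théorème \ref{famgl3} et corollaire \ref{gl3alt}) : découpage en tranches $S_{P}^{c}(v)$, fibration $f_{P}$ sur $\xx^{M}$ avec $M\cong\gl_{1}\times\gl_{2}$, et réduction au cas équivalué de $\gl_{2}$. L'architecture globale (récurrence via le lemme \ref{trivial1}, ordre des tranches, ordre des cellules à l'intérieur d'une tranche) est correcte et coïncide avec celle de l'article.

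Il y a cependant une lacune réelle au cœur de l'argument : vous affirmez que $H=I^{M}_{\ba_{P}}$ normalise $\ad(\gamma)(\kn\cap\lie(I))$ pour un $\ba_{P}$ \emph{arbitraire} négatif par rapport à $P$, et vous dites que les choix des $\ba_{P}$ sont faits « indépendamment ». C'est faux en général, et c'est précisément là que se concentre toute la difficulté du cas non équivalué. Lorsque les racines de $N$ ont des valuations distinctes sur $\gamma$ (par exemple $\varpi_{P}=\varpi_{2}$ avec $\val(\alpha_{1,3}(\gamma))=n_{1}<n_{2}=\val(\alpha_{2,3}(\gamma))$), le réseau $\ad(\gamma)\ku$ est distordu de façon non uniforme, et $I^{M}$ (ou $I^{M}_{\ba}$ pour un $\ba$ générique) ne le normalise pas ; la proposition \ref{techfam} ne s'applique alors pas, et les fibres de $f_{P}$ au-dessus de $\xx_{\gamma}^{M}$ ne sont pas de dimension constante — c'est exactement l'origine de la cellule singulière $a_{0}b_{1}=0$ de l'exemple \ref{typique}. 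La démonstration de l'article choisit pour cette raison $\ba=(n_{1},n_{2},n_{2})$ \emph{adapté aux valuations radicielles de} $\gamma$, traite séparément les cas $\varpi_{P}=\pm\varpi_{1}$ (où $\ad(\gamma)\ku$ est un multiple scalaire de $\ku$, donc tout Iwahori convient) et $\varpi_{P}=\pm\varpi_{2}$ (où il faut remplacer $I^{M}_{\ba}$ par le sous-groupe $H=I^{M}_{\ba}\cap\Ad(\diag(\vep^{m_{w}},1,1))K^{M}$ et vérifier à la main que $HwK^{M}/K^{M}\cap\xx_{\gamma}^{M}$ est un espace affine). Signalons aussi deux imprécisions secondaires : les cellules $X$ de $\xx_{\gamma}^{\gl_{2}}$ ne sont pas seulement $\baa^{0}$ ou $\baa^{1}$ mais des espaces affines de dimension pouvant aller jusqu'à $\val(\alpha(\gamma'))$ ; et le corollaire \ref{technormal} s'applique à une $H$-orbite $X$ de $\xx^{M}$ avec pour base $X\cap\xx_{\gamma}^{M}$, non à $X$ déjà intersecté avec $\xx_{\gamma}^{M}$. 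Enfin, la « difficulté principale » que vous renvoyez au dernier paragraphe (compatibilité des ordres entre tranches) est en fait réglée par les lemmes de \S 2.4.1 de l'article, qui montrent que les unions partielles de tranches sont des variétés de Schubert, donc fermées ; le vrai point délicat n'est pas là, mais dans le choix de $\ba$ décrit ci-dessus.
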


Pour $\gamma$ elliptique, le pavage de $\xx_{\gamma}$ est donné par Goresky, Kottwitz et Macpherson dans \cite{gkm1} car $\gamma$ est forcément équivalué.

\begin{prop}

Pour $\gamma$ mélangé, la cohomologie de $\xx_{\gamma}$ est pure.
\end{prop}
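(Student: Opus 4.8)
The plan is to reduce the mixed case to the nonramified case for $\gl_2$ and to an equivalued case, using the slice paving machinery of Section~2. Since $\gamma$ is mixed, after conjugating by an element of $\gl_3(F)$ (which does not change $\xx_\gamma$ up to isomorphism) we may assume $\gamma=\diag(a,\gamma')$ with $a\in F$ and $\gamma'\in \ggl_2(F((\vep^{1/2})))$ an elliptic (hence equivalued) regular semisimple element whose two eigenvalues are conjugate under $\gal(F((\vep^{1/2}))/F)$; in particular $\gamma$ lies in $\km(F)$ for the Levi $M=\gl_1\times\gl_2$ associated to the maximal parabolic $P=\varpi_1$. The centralizer $Z_G(\gamma)$ is $F^\times\times F((\vep^{1/2}))^\times$, so $\xx_\gamma$ is a locally finite union of translates of $\xx_\gamma^M$ by $Z_G(\gamma)$, and $\xx_\gamma^M$ decomposes as $\xx^{\gl_1}\times \xx^{\gl_2}_{\gamma'}$. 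The affine Springer fibre $\xx^{\gl_2}_{\gamma'}$ is paved by affine spaces by the equivalued case of Goresky--Kottwitz--Macpherson, so $\xx^M_\gamma$ is paved; one then has to propagate this paving to all of $\xx_\gamma$.

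First I would fix an exhaustion of $\xx_\gamma$ by the closed $T$-invariant pieces $\xx_{\geq -m}\cap \xx_\gamma$, and for each such piece apply the slice decomposition of a suitable affine Schubert variety $\sch(v)$ (with $v_1\geq v_2=v_3$ or $v_1=v_2\geq v_3$) containing it. This cuts $\sch(v)\cap\xx_\gamma$ into the locally closed pieces $S^c_P(v)\cap\xx_\gamma$ and, recursively, a leftover $\sch(\bar v)\cap\xx_\gamma$, following Lemma~\ref{trivial1}. On each slice $S^c_P(v)$ one chooses $\ba_P\in\bz^d$ negative with respect to $P$ and gets, by Proposition~\ref{tranche}, a paving of $S^c_P(v)$ by $I_{\ba_P}$-cells; the key point (Corollary~\ref{affine2}) is that each cell $S^c_P(v)\cap I_{\ba_P}wK/K$ is an affine-space fibration over its image in $\xx^M$. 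Intersecting with $\xx_\gamma$ and using the retraction $f_P$, Proposition~\ref{techfam} (or Corollary~\ref{technormal}) shows that $S^c_P(v)\cap I_{\ba_P}wK/K\cap\xx_\gamma$ is an affine-space fibration over a locally closed subvariety of $\xx^M_\gamma$, provided the dimension-constancy hypotheses on $\ku/\ku\cap\Ad(g)\kg(\co)$ and $\ad(\gamma)\ku/\ad(\gamma)\ku\cap\Ad(g)\kg(\co)$ hold along that subvariety; here $\ku$ is the relevant piece of $\kn(F)\cap I$ cut out by the cell. Since the base $\xx^M_\gamma\cong\xx^{\gl_1}\times\xx^{\gl_2}_{\gamma'}$ is paved by affine spaces, refining the base paving so that these dimensions are constant on each stratum yields a paving of each cell-intersection, hence of $\sch(v)\cap\xx_\gamma$, hence of $\xx_{\geq -m}\cap\xx_\gamma$, and purity of $\xx_\gamma$ follows.

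The main obstacle is verifying the dimension-constancy hypothesis of Proposition~\ref{techfam} along the strata of the $\gl_2$-paving of $\xx^{\gl_2}_{\gamma'}$: a priori $\dim_k\bigl(\ad(\gamma)\ku/\ad(\gamma)\ku\cap\Ad(g)\kg(\co)\bigr)$ can jump, and one must choose the parameters $\ba_P$ and the constant $c$ (subject to \eqref{choisirc}) and refine the base stratification so that it does not. Concretely, with $P$ the maximal parabolic with Levi $\gl_1\times\gl_2$, $\ku$ is a rank-two free $\co$-module on which $\ad(\gamma)$ acts through $a-\gamma'$, an invertible (since $a\neq$ eigenvalues of $\gamma'$, by regularity) element of $\ggl_2(F((\vep^{1/2})))$; the valuations governing the relevant lattices are controlled by the combinatorics of $\xx^{\gl_2}_{\gamma'}$, and because $\gamma'$ is equivalued these are essentially linear in the Iwahori-stratum index, so the jumps occur only along a sub-paving that one can arrange to be compatible with the Bruhat--Tits strata. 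Once this compatibility is checked — which is where the equivalued structure of $\gamma'$ is essential — the rest is bookkeeping: assembling the fibrations over the affine-space strata of $\xx^M_\gamma$, and checking that the closure relations from Proposition~\ref{tranche} and Corollary~\ref{triangle1}--\ref{triangle2} give a genuine filtration by closed subvarieties with affine-space successive quotients.
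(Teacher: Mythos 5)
Your plan takes a genuinely different and much heavier route than the paper, and it contains a real gap. The paper's argument is direct: after conjugation one writes $\gamma=\diag(a\vep^{n_{1}},\gamma_{2})$ with $\gamma_{2}\in\ggl_{2}(F)$ elliptic (hence equivalued), and paves $\xx_{\geq -m}$ with the single Iwahori $I'=\Ad(\diag(\vep^{3m},1,1))I$. The whole point of this choice is that every cell $C(w)=\xx_{\geq -m}\cap I'wK/K$ factors as $U\,I^{M}wK/K$ where $U\subset N(F)$ is the fixed congruence subgroup of level $\kp^{m_{w}}$, $m_{w}=-m-w_{1}$; then $\ku$ and $\ad(\gamma)\ku=(\gamma_{2}-a\vep^{n_{1}})\kp^{m_{w}}\co^{2}=\kp^{m_{w}+n_{1}}\co^{2}$ are \emph{scalar} lattices normalized by $I^{M}$, so Corollaire \ref{technormal} applies on each entire cell, and $C(w)\cap\xx_{\gamma}$ is an affine-space fibration over $I^{M}wK^{M}/K^{M}\cap\xx_{\gamma}^{M}$, itself an affine space since $M=\gl_{1}\times\gl_{2}$. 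No slicing, no recutting of cells, and the dimension-constancy question you identify as the ``main obstacle'' never arises: it is trivialized by the choice of $I'$.

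The gap in your plan is this: the slice decomposition $R(v)=\bigsqcup_{P}R^{c}_{P}(v)$ runs over all six maximal semi-standard parabolics of $\gl_{3}$, but a mixed $\gamma$ lies in $\km(F)$ only for the two of them whose Levi keeps the elliptic $2\times 2$ block together. For the other four, the hypothesis $\gamma\in\km(F)$ of Lemme \ref{tech0} and Proposition \ref{techfam} fails, so the fibration $f_{P}:S^{c}_{P}(v)\cap I_{\ba_{P}}wK/K\cap\xx_{\gamma}\to\xx^{M}_{\gamma}$ you invoke does not exist on those slices; this is precisely why Théorème \ref{famgl3} is stated only for split $\gamma\in\kt(\co)$. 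In addition, even on the compatible slices your treatment of the constancy of $\dim\bigl(\ad(\gamma)\ku/\ad(\gamma)\ku\cap\Ad(g)\kg(\co)\bigr)$ is only a heuristic (``essentially linear in the stratum index''), whereas this is exactly the point that must be computed; and the assertion that $\xx_{\gamma}$ is a union of $Z_{G}(\gamma)$-translates of $\xx^{M}_{\gamma}$ is false --- the two are related only through the retraction $f_{P}$, whose fibres are positive-dimensional.
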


\begin{proof}

Pour $\gamma$ équivalué, c'est déjà montré par Goresky, Kottwitz et Macpherson. Pour $\gamma$ non-equivalué, à conjugaison près, on peux supposer que
$$
\gamma=\begin{bmatrix}a\vep^{n_{1}}&&\\ &&\vep^{n_{2}}\\&b\vep^{n_{2}+1}\end{bmatrix},\quad n_{1}\leq n_{2},\,a,b\in \co^{\times}.
$$

On va paver $\xx_{\geq -m}\cap \xx_{\gamma}$ en espaces affines. D'abord, on pave $\xx_{\geq -m}$ avec l'Iwahori $I'=\Ad(\diag(\vep^{3m},1,1))I$. Pour $w\in \xx_{\geq -m}^{T}$, on note $C(w)=\xx_{\geq -m}\cap I'wK/K$. Par la proposition \ref{pav1}, on a un pavage en espaces affines $
\xx_{\geq -m}=\bigsqcup_{w\in \xx_{\geq -m}^{T}}C(w)$ et 
$$
C(w)= \begin{bmatrix}\co&&\\ \kp^{m_{w}}&\co&\co\\ \kp^{m_{w}}&\kp&\co\end{bmatrix}wK/K,\quad m_{w}=-m-w_{1}.
$$

On note 
$$
P=\begin{bmatrix}*&&\\ *&*&*\\  *&*&*\end{bmatrix}.$$ 
Soit $P=MN$ sa factorisation de Levi standard. D'après le corollaire \ref{technormal}, la rétraction 
\begin{equation*}\label{eqnd1}
f_{P}:C(w)\cap \xx_{\gamma}\to I^{M}wK^{M}/K^{M}\cap \xx_{\gamma}^{M}
\end{equation*}
est une fibration en espaces affines. En effet, on prend 
$$
U=\begin{bmatrix}1&&\\ \kp^{m_{w}}&1&\\  \kp^{m_{w}}&&1\end{bmatrix},
$$ 
alors $C(w)=UI^{M}wK/K$. Il est évident que $I^{M}$ normalise $\ku=\lie(U)$ et $\ad(\gamma)\ku$.

De plus, l'intersection $I^{M}wK^{M}/K^{M}\cap \xx^{M}_{\gamma}$ est isomorphe à un espace affine car $M=\gl_{1}\times \gl_{2}$, donc $C(w)\cap \xx_{\gamma}$ l'est aussi.

\end{proof}

Avec le même pavage, on peut montrer:

\begin{prop}\label{d1}

Soit $\gamma=(\gamma_{1},\gamma_{2})\in (\ggl_{1}\times\ggl_{d-1})(F)\subset\ggl_{d}(F)$ tel que $\alpha_{1,i}(\gamma)=n_{1},\,i=2,\cdots,d$ et $\gamma_{2}\in \ggl_{d-1}(F)$ est équivalué de valuation $n_{2}+r,\,r\in \bq,\,0\leq r<1$, $n_{1}\leq n_{2},\,n_{1},n_{2}\in \bn$. Alors $\xx_{\gamma}$ est pur.\end{prop}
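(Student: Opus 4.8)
The plan is to mimic exactly the pavage constructed in the preceding proposition for the mixed $\gl_3$ case, reducing the problem to a statement about $\gl_{d-1}$ that is already known because $\gamma_2$ is equivalued. First I would put $\gamma$ in the form $\gamma=(\gamma_1,\gamma_2)$ with $\gamma_1=a\vep^{n_1}\in\ggl_1(F)$ and $\gamma_2\in\ggl_{d-1}(F)$ equivalued of valuation $n_2+r$, and I would fix the parabolic $P=MN$ with $M=\gl_1\times\gl_{d-1}$ (so $J_P=\{1\}$ up to Weyl conjugation, i.e. the block decomposition $1+(d-1)$), $N$ the unipotent radical. As in the $\gl_3$ case, I would pave the truncated grassmannian $\xx_{\geq -m}$ using the non-standard Iwahori $I'=\Ad(\diag(\vep^{dm},1,\dots,1))I$, which by Proposition \ref{pav1} gives a pavage in affine spaces $\xx_{\geq -m}=\bigsqcup_{w\in\xx_{\geq-m}^T}C(w)$ with $C(w)=\xx_{\geq -m}\cap I'wK/K$ an explicit affine space; the point of this choice of $\ba=(dm,0,\dots,0)$ is precisely that it is negative with respect to $P$, so that $N(F)\cap I$ acts trivially under the relevant $\bbg_m$-limit and $C(w)=U\,I^MwK/K$ where $U=N(F)\cap C(w)$ is an explicit congruence subgroup of $N(F)$ normalized by $I^M$.

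The core step is to apply Corollary \ref{technormal}: since $I^M$ normalizes $\ku=\lie(U)$ and also $\ad(\gamma)\ku$ (because $\gamma\in\km(F)$ and $U$ is the unipotent radical of a parabolic with Levi $M$, so the adjoint action of $\gamma$ preserves $\kn$, and $I^M$-conjugation commutes with $\ad(\gamma)$ on $\km$-normalized subspaces), the retraction
$$
f_P:C(w)\cap\xx_\gamma\longrightarrow I^MwK^M/K^M\cap\xx^M_\gamma
$$
is a fibration in affine spaces. Here I would invoke Lemma \ref{tech0} / Proposition \ref{techfam} to see the fibres are kernels of $\ad(\gamma)$ on the relevant vector-bundle quotients, hence affine of locally constant dimension along the $I^M$-orbit stratification of the target. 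So it suffices to pave $I^MwK^M/K^M\cap\xx^M_\gamma$ in affine spaces for each $w$.

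Now $\xx^M_\gamma=\xx^{\gl_1}_{\gamma_1}\times\xx^{\gl_{d-1}}_{\gamma_2}$, and $\xx^{\gl_1}_{\gamma_1}$ is a point, so the intersection $I^MwK^M/K^M\cap\xx^M_\gamma$ is (a locally closed piece of) the affine Springer fibre $\xx^{\gl_{d-1}}_{\gamma_2}$ intersected with a Bruhat–Tits cell; since $\gamma_2$ is equivalued, Goresky–Kottwitz–Macpherson \cite{gkm1} give that intersecting $\xx^{\gl_{d-1}}_{\gamma_2}$ with the Bruhat–Tits pavage already yields a pavage in affine spaces, so each piece $I^MwK^M/K^M\cap\xx^M_{\gamma}$ is an affine space (or a disjoint union that refines to one in the filtration). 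Concatenating over $w$ in an order refining $\prec_{I'}$ and composing with the affine fibrations $f_P$ gives the desired pavage of $\xx_{\geq -m}\cap\xx_\gamma$; letting $m\to\infty$ gives purity of $\xx_\gamma$. The main obstacle I anticipate is the bookkeeping needed to check that the dimension-constancy hypothesis of Proposition \ref{techfam} genuinely holds along each stratum $I^MwK^M/K^M\cap\xx^M_\gamma$ — i.e. that $\dim_k(\ku/\ku\cap\Ad(g)\kg(\co))$ and $\dim_k(\ad(\gamma)\ku/\ad(\gamma)\ku\cap\Ad(g)\kg(\co))$ are locally constant there — which relies on $\gamma_2$ being equivalued (so that the relevant valuations of $\ad(\gamma)$-eigenvalues are uniform) and on the explicit shape of $U$ coming from the choice of $I'$; verifying this is exactly parallel to the $\gl_3$ computation but must be done with the $(1,d-1)$-block structure and the equivaluation parameter $r$ kept track of.
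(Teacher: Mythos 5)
Your proposal reconstructs exactly the argument the paper intends: its entire proof of the proposition is the phrase \emph{avec le même pavage}, referring to the preceding mixed $\gl_{3}$ case, and you carry that pavage over verbatim (troncature pavée par $I'=\Ad(\diag(\vep^{dm},1,\dots,1))I$, rétraction $f_{P}$ sur $M=\gl_{1}\times\gl_{d-1}$ via le corollaire \ref{technormal}, puis le résultat équivalué de Goresky--Kottwitz--Macpherson pour $\gamma_{2}$). The only point to watch — which your parenthetical hedge already anticipates — is that for $r\neq 0$ the élément $\gamma_{2}$ is ramifié, so the GKM pavage of $\xx^{\gl_{d-1}}_{\gamma_{2}}$ is by affine bundles over Hessenberg varieties rather than literally by Bruhat--Tits cells, and the base $I^{M}wK^{M}/K^{M}\cap\xx^{M}_{\gamma}$ must therefore be further refined before composing with the affine fibration $f_{P}$.
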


En conclusion, on a

\begin{thm}\label{gl3}
Pour tout $\gamma\in \ggl_{3}(F)$ semi-simple régulier entier, la cohomologie de $\xx_{\gamma}$ est pure.\end{thm}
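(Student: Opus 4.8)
The strategy is to reduce every semi-simple régulier entier $\gamma\in\ggl_3(F)$ to one of the cases already treated in this section. Up to conjugation by $G(\co)$ and up to translation of $\gamma$ by a scalar in $\co$ (which only shifts $\xx_\gamma$ by an element of $X_*(T)$ and does not change purity), one of three situations occurs, according to the isomorphism type of the centralizer $Z_G(\gamma)$. If $\gamma$ is elliptique, then $Z_G(\gamma)\cong F((\vep^{1/3}))^\times$, so $\val(\alpha(\gamma))$ is constant over all roots $\alpha$, i.e.\ $\gamma$ is équivalué; purity then follows from the theorem of Goresky--Kottwitz--Macpherson in \cite{gkm1}. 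If $\gamma$ is non-ramifié, purity is exactly the theorem of Lucarelli quoted above. So the only remaining case is $\gamma$ mélangé, $Z_G(\gamma)\cong F^\times\times F((\vep^{1/2}))^\times$.

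For $\gamma$ mélangé, I would follow the proof of the Proposition on mélangé elements given just above: after conjugation one may assume $\gamma=\diag(a\vep^{n_1})\oplus\gamma_2$ with $\gamma_2\in\ggl_2(F)$ the block $\begin{bmatrix}0&\vep^{n_2}\\ b\vep^{n_2+1}&0\end{bmatrix}$ (so $\gamma_2$ is équivalué of valuation $n_2+\tfrac12$), $a,b\in\co^\times$, $n_1\le n_2$. This is precisely the hypothesis of Proposition \ref{d1} with $d=3$, $r=\tfrac12$, so $\xx_\gamma$ is pur. Concretely one pave $\xx_{\geq -m}$ with the non-standard Iwahori $I'=\Ad(\diag(\vep^{3m},1,1))I$ using Proposition \ref{pav1}, writes the parabolic $P=MN$ with $M=\gl_1\times\gl_2$, and applies Corollaire \ref{technormal} to the retraction $f_P:C(w)\cap\xx_\gamma\to I^MwK^M/K^M\cap\xx^M_\gamma$ with $U$ the lower unipotent block: $I^M$ normalizes $\ku$ and $\ad(\gamma)\ku$, so $f_P$ is a fibration en espaces affines. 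Finally $I^MwK^M/K^M\cap\xx^M_\gamma$ is isomorphic to an affine space because $\xx^M_\gamma=\xx^{\gl_1}\times\xx^{\gl_2}_{\gamma_2}$ and the affine Springer fibre for $\gl_2$ is paved by affine spaces; hence $C(w)\cap\xx_\gamma$ is an affine space, and letting $m\to\infty$ one gets a pavage en espaces affines of $\xx_\gamma$, in particular purity of its cohomology.

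The only point requiring a little care is the reduction to normal form: one must check that an integral semi-simple régulier element of $\ggl_3(F)$ is $G(\co)$-conjugate, after subtracting a scalar, to one of the three listed shapes — this uses $\mathrm{char}(k)>3$ to diagonalize/block-diagonalize the characteristic polynomial over the appropriate unramified or tamely ramified extension, and then an argument that the corresponding $\co$-order can be put in the stated block form. Everything else is a direct invocation of the earlier results (Lucarelli, \cite{gkm1}, Proposition \ref{d1}), so this classification-and-reduction step is really the main — and essentially only — obstacle; once it is in place, no further computation is needed.
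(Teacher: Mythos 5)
Votre démonstration suit exactement la même stratégie que l'article : distinction des trois cas selon le type du centralisateur, renvoi à Lucarelli pour le cas non-ramifié, à Goresky--Kottwitz--Macpherson pour le cas elliptique (qui est automatiquement équivalué), et, pour le cas mélangé, utilisation de l'Iwahori non standard $I'=\Ad(\diag(\vep^{3m},1,1))I$ avec la rétraction $f_{P}$ vers $M=\gl_{1}\times\gl_{2}$ et le corollaire \ref{technormal}. C'est correct et conforme à l'argument du texte ; veillez seulement à mentionner l'hypothèse $\mathrm{char}(k)>3$ héritée de la section.
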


Dans la suit, on va utiliser les pavages non standard en tranches de la grassmannienne affine tronquée que l'on a construit dans \S2.4, pour paver les fibres de Springer affines $\xx_{\gamma}$, $\gamma$ non-ramifié. Ce pavage est plus ``flexible'' que celui de Lucarelli, et il est indisponible pour paver la fibre de Springer affine pour $\gl_{4}$ dans le cas non-ramifié.

\begin{thm}\label{famgl3}

Soient $\gamma\in \kt(\co)$ un élément régulier, $v\in X_{*}^{+}(T)$, $c\in \br\backslash \bz$ un nombre comme dans l'équation (\ref{choisirc}). Alors pour tout $P\in \cf(T)$ maximal, l'intersection $S_{P}^{c}(v)\cap \xx_{\gamma}$ admet un pavage en espaces affines.

\end{thm}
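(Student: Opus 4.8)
The proof reduces the assertion about $S_P^c(v) \cap \xx_\gamma$ to the already-established purity result for $\gl_3$ (Théorème \ref{gl3}), via the slice fibration structure of Proposition \ref{tranche} combined with the technical Proposition \ref{techfam}. First I would fix $P = MN$ maximal and choose $\ba_P \in \bz^d$ negative with respect to $P$. By Proposition \ref{tranche}, the slice $S_P^c(v)$ carries a decomposition into the locally closed pieces $S_P^c(v) \cap I_{\ba_P} w K/K$ indexed by $w \in R_P^c(v)$, each of which fibers via the retraction $f_P$ in affine spaces onto $S_P^c(v) \cap I_{\ba_P}^M w K^M/K^M$ (Corollaire \ref{affine2}), and the base of this fibration is, up to a shift, a product of two affine Schubert cells in the affine Grassmannians of $\gl_{J_P}$ and $\gl_{\bar J_P}$.

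The key point is to intersect everything with $\xx_\gamma$. Since $\gamma \in \kt(\co)$, it lies in $\km(F)$ as well, so $\xx_\gamma^M$ makes sense and $\xx_\gamma^M = \xx_\gamma^{\gl_{J_P}} \times \xx_\gamma^{\gl_{\bar J_P}}$ (each factor a Levi block). The retraction $f_P$ restricts to $f_P : S_P^c(v) \cap \xx_\gamma \to S_P^c(v) \cap \xx_\gamma^M$ because of the limit identity (\ref{limitiwahori}) and the description of the fibers of $f_P$ from Lemme \ref{tech0}. Stratifying the base $S_P^c(v) \cap \xx_\gamma^M$ by the loci where $\dim_k(\ku/\ku \cap \Ad(g)\kg(\co))$ and $\dim_k(\ad(\gamma)\ku/\ad(\gamma)\ku \cap \Ad(g)\kg(\co))$ are constant — in practice one can take the strata to be intersections of the Bruhat–Tits slice pieces $I^M_{\ba_P} w K^M/K^M \cap \xx_\gamma^M$, on which these dimensions are visibly constant as functions of $w$ — Proposition \ref{techfam} tells us that over each such stratum, $f_P : U X \cap \xx_\gamma \to X$ (with $U = N(F) \cap I$) is an affine-space fibration. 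Hence a paving in affine spaces of each base stratum pulls back to one of the corresponding piece of $S_P^c(v) \cap \xx_\gamma$, and assembling over all strata (respecting the Bruhat–Tits closure order $\prec_{I_{\ba_P}}$ from Proposition \ref{tranche}) yields a paving of $S_P^c(v) \cap \xx_\gamma$.

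It remains to paver the base strata, i.e. the locally closed pieces of $S_P^c(v) \cap \xx_\gamma^M$. Since $\xx_\gamma^M = \xx_\gamma^{\gl_{J_P}} \times \xx_\gamma^{\gl_{\bar J_P}}$ and $|J_P|, |\bar J_P| \leq 3$ when... — wait, for general $d$ this is false; but the relevant application is $d = 4$, where the maximal parabolics have Levi blocks of sizes $(1,3)$ or $(2,2)$ or $(3,1)$, so each factor is $\gl_{\leq 3}$. For $\gl_{\leq 2}$ blocks the affine Springer fiber of a regular $\gamma \in \kt(\co)$ is elementary (affine cells, or finite, directly); for $\gl_3$ blocks one invokes Théorème \ref{gl3}. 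More precisely, the base pieces are intersections of affine Schubert cells $I^M_{\ba_P} w K^M/K^M$ with $\xx_\gamma^M$, and for the $\gl_3$ factor one either applies the (already proved) existence of an affine paving compatible with some Iwahori, or proceeds by a further inductive application of the slice construction. Assembling the two factors by the product of their pavings completes the argument.

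**Main obstacle.** The delicate point is ensuring the dimension-constancy hypothesis of Proposition \ref{techfam} holds on a stratification of the base that is \emph{compatible} with a paving in affine spaces of $\xx_\gamma^M$ — one must check that the loci where $\dim_k(\ku/\ku \cap \Ad(g)\kg(\co))$ jumps are unions of lower strata, and that the resulting strata can be simultaneously refined into affine cells of the $\gl_{J_P}$ and $\gl_{\bar J_P}$ affine Springer fibers. This is essentially a bookkeeping problem about how the root-valuation data of $\gamma$, organized by the parabolic $P$ and the cocharacter $w$, interacts with the Moy–Prasad/Bruhat–Tits combinatorics; the formula for $\dim(\kn_I/\kn_I \cap \Ad(w)\kg(\co))$ in the proof of Lemme \ref{affine1} and the explicit $H = H_1 \times H_2^{-1}$ description in Proposition \ref{tranche} are what make it tractable. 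The rest is a formal patching argument using the closure relations.
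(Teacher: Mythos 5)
Votre squelette général --- découper $S_P^c(v)$ en les pièces $S_P^c(v)\cap I_{\ba}wK/K$ de la proposition \ref{tranche}, rétracter par $f_P$ sur le Levi, et invoquer la proposition \ref{techfam} sur des strates où les deux dimensions pertinentes sont constantes --- est bien la stratégie du papier, mais deux de vos étapes ne fonctionnent pas telles quelles. D'abord, vous vous êtes trompé de groupe ambiant : le théorème \ref{famgl3} est un énoncé sur $\gl_3$ (c'est l'ingrédient, via le corollaire \ref{gl3alt}, de l'argument ultérieur pour $\gl_4$), donc le Levi $M$ est $\gl_1\times\gl_2$ ou $\gl_2\times\gl_1$, et le papier n'invoque jamais le théorème \ref{gl3} pour la base. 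Il n'en a pas besoin, et ne pourrait pas le faire utilement : ce qui est requis n'est pas la pureté ni l'existence d'un pavage quelconque de $\xx_\gamma^M$, mais le fait précis que chaque pièce de base individuelle $HwK^M/K^M\cap\xx_\gamma^M$, au-dessus de laquelle $f_P$ est une fibration affine, est elle-même un espace affine ; cela tient à ce que $M$ n'a que des blocs $\gl_{\leq 2}$. Un pavage de $\xx_\gamma^M$ produit par une méthode sans rapport ne serait pas compatible avec vos strates, donc votre étape de recollement n'est pas justifiée.

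Ensuite, la constance des dimensions que vous qualifiez de « visiblement constante » et que vous reléguez à de la comptabilité est exactement le point substantiel, et elle est fausse pour un $\ba_P$ arbitraire négatif par rapport à $P$. La constance de $\dim_k\bigl(\ad(\gamma)\ku/\ad(\gamma)\ku\cap\Ad(g)\kg(\co)\bigr)$ le long d'une pièce $HwK^M/K^M$ s'obtient par le lemme \ref{normal}, qui exige que $H$ normalise $\ad(\gamma)\ku$. Comme $\ad(\gamma)$ multiplie les espaces radiciels $\kg_{\alpha_{1,3}}$ et $\kg_{\alpha_{2,3}}$ par des éléments de valuations différentes $n_1\leq n_2$, le groupe $I_{\ba}^M$ ne normalise $\ad(\gamma)\ku$ que lorsque $\ba$ est adapté aux valuations radicielles de $\gamma$ : le papier conjugue d'abord $\gamma$ par le groupe de Weyl de sorte que $\val(\alpha_{1,2}(\gamma))=\val(\alpha_{1,3}(\gamma))=n_1\leq n_2=\val(\alpha_{2,3}(\gamma))$, puis prend $\ba=(n_1,n_2,n_2)$ pour les paraboliques dans l'orbite de $\pm\varpi_2$ (et l'Iwahori standard pour $\pm\varpi_1$, où $\ad(\gamma)\ku=\vep^{n_1}\ku$ grâce à la même normalisation). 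Sans préciser ce choix, l'hypothèse de la proposition \ref{techfam} n'est tout simplement pas vérifiée sur vos strates, et la démonstration reste incomplète en son étape centrale.
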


\begin{proof}

Après conjuguer par le groupe de Weyl, on peut supposer que 
$$
\val(\alpha_{1,2}(\gamma))=\val(\alpha_{1,3}(\gamma))=n_{1},\quad \val(\alpha_{2,3}(\gamma))=n_{2}, \quad n_{1}\leq n_{2}.
$$
On note $\ba=(n_{1},n_{2},n_{2})$. Soit $P=MN$ la factorisation de Levi standard de $P$. On note $U=N(F)\cap I$, alors on a $IwK/K=UI^{M}wK/K$. 

\begin{enumerate}

\item Pour $\varpi_{P}=\varpi_{1}$ ou $-\varpi_{1}$, on va montrer que l'intersection $IwK/K\cap \xx_{\gamma}$ est isomorphe à un espace affine standard. On traite que $\varpi_{P}=\varpi_{1}$, l'autre cas étant pareil.

Parce que $I^{M}$ normalise $\ku=\lie(U)$ et $\ad(\gamma)\ku$,  d'après le corollaire \ref{technormal}, la rétraction 
\begin{equation*}
f_{P}:IwK/K\cap \xx_{\gamma}\to I^{M}wK^{M}/K^{M}\cap \xx_{\gamma}^{M}
\end{equation*} 
est une fibration en espaces affines. De plus, l'intersection $I^{M}wK^{M}/K^{M}\cap\xx^{M}_{\gamma}$ est isomorphe à un espace affine car $M=\gl_{1}\times \gl_{2}$. L'intersection $IwK/K\cap \xx_{\gamma}$ est donc isomorphe à un espace affine.

\item Pour $\varpi_{P}=\varpi_{2}$ ou $s_{23}\varpi_{2}$. On ne traite que $\varpi_{P}=\varpi_{2}$, l'autre cas étant pareil.

Soit $m_{w}=\lceil c \rceil-w_{2}$, et 
$$
H=I^{M}_{\ba}\cap \Ad(\diag(\vep^{m_{w}},1,1))K^{M},
$$
qui est un groupe de Lie. D'après la proposition \ref{tranche}, on a un pavage en espaces affines
$$
S_{P}^{c}(v)=\bigsqcup_{w\in R_{P}^{c}(v)}S_{P}^{c}(v)\cap I_{\ba}wK/K,
$$ 
et
$$
S_{P}^{c}(v)\cap I_{\ba}wK/K=UHwK/K.
$$
On va montrer que l'intersection $S_{P}^{c}(v)\cap I_{\ba}wK/K\cap \xx_{\gamma}$ est isomorphe à un espace affine standard. Ainsi on obtiendra un pavage en espaces affines
$$
S_{P}^{c}(v)\cap \xx_{\gamma}=\bigsqcup_{w\in R_{P}^{c}(v)}S_{P}^{c}(v)\cap I_{\ba}wK/K\cap \xx_{\gamma}.
$$
Par le corollaire \ref{affine2}, la rétraction 
$$
f_{P}: UHwK/K\to HwK^{M}/K^{M}
$$ 
est une fibration en espaces affines. Puisque $H\subset I_{\ba}^{M}$, il normalise $\ad(\gamma)\ku$, le lemme $\ref{normal}$ et la proposition \ref{techfam} impliquent que la rétraction 
\begin{equation*}
f_{P}:UHwK/K\cap \xx_{\gamma}\to HwK^{M}/K^{M}\cap \xx_{\gamma}^{M}
\end{equation*} 
est une fibration en espaces affines. Puisque $M=\gl_{2}\times \gl_{1}$, l'intersection $HwK^{M}/K^{M}\cap \xx_{\gamma}^{M}$ est isomorphe à un espace affine. Par conséquent, $S_{P}^{c}(v)\cap I_{\ba}wK/K\cap \xx_{\gamma}$ est isomorphe à un espace affine.

\item Pour $\varpi_{P}=-\varpi_{2}$ ou $-s_{23}\varpi_{2}$. On va montrer que l'intersection $S_{P}^{c}(v)\cap I_{-\ba}wK/K\cap \xx_{\gamma}$ est isomorphe à un espace affine standard, ainsi on obtiendra un pavage en espaces affines de $S_{P}^{c}(v)\cap \xx_{\gamma}$ similaire au cas précedent. On ne traitre que le cas $\varpi_{P}=-\varpi_{2}$, l'autre cas étant pareil.

Soit $m_{w}=-\lfloor c \rfloor+w_{2}$, on note 
$$
H=I^{M}_{-\ba}\cap \Ad(\diag(1,\vep^{m_{w}},\vep^{m_{w}}))K^{M}.
$$
qui est un groupe de Lie. D'après la proposition \ref{tranche}, on a 
$$
S_{P}^{c}(v)\cap I_{-\ba}wK/K=UHwK/K.
$$
Par le corollaire \ref{affine2}, la rétraction 
$$
f_{P}: UHwK/K\to HwK^{M}/K^{M}
$$ 
est une fibration en espaces affines. Puisque $H\subset I^{M}_{-\ba}$, il normalise $\ad(\gamma)\ku$. Le lemme $\ref{normal}$ et la  proposition \ref{techfam} impliquent que la rétraction 
\begin{equation*}
f_{P}:UHwK/K\cap \xx_{\gamma}\to HwK^{M}/K^{M}\cap \xx_{\gamma}^{M}
\end{equation*} 
est une fibration en espaces affines. Puisque $M=\gl_{2}\times \gl_{1}$, l'intersection $HwK^{M}/K^{M}\cap \xx_{\gamma}^{M}$ est isomorphe à un espace affine et donc $S_{P}^{c}(v)\cap I_{-\ba}wK/K\cap \xx_{\gamma}$ est isomorphe à un espace affine.

\end{enumerate}
\end{proof}

En utilisant le pavage en tranches de $\sch(v)$ que l'on a construit dans \S 2.4, on obtient

\begin{cor}\label{gl3alt}

Pour tout $v\in X_{*}^{+}(T)$, la sous-variété fermée $\sch(v)\cap \xx_{\gamma}$ de $\xx_{\gamma}$ admet un pavage en espaces affines. En particulier, elle est pure.
\end{cor}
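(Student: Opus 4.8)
The plan is to reduce the statement to Theorem~\ref{famgl3} together with the stratification of $\sch(v)$ into tranches constructed in \S2.4, proceeding by induction on the dimension of $\sch(v)$. First I would observe that after conjugating $\gamma$ by an element of the Weyl group (which only permutes the coordinates of $X_*(T)$ and does not affect whether a given closed subvariety admits an affine paving), we may assume $\gamma\in\kt(\co)$ is in the normalized position $\val(\alpha_{1,2}(\gamma))=\val(\alpha_{1,3}(\gamma))=n_1\le\val(\alpha_{2,3}(\gamma))=n_2$, which is exactly the hypothesis under which Theorem~\ref{famgl3} applies. The case $v_1=v_2=\cdots=v_d$ is trivial since then $\sch(v)$ is a single point; so assume $\sch(v)$ has positive dimension.

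Next I would split according to whether $\sch(v)^T=R(v)$ or $\sch(v)^T\supsetneq R(v)$. If $\sch(v)^T\supsetneq R(v)$, Lemma~\ref{trivial1} produces $\bar v\in X_*^+(T)$ with $\bar v_1\ge\bar v_2=\cdots=\bar v_{d-1}\ge\bar v_d$ and $\sch(v)^T=\sch(\bar v)^T\cup R(v)$; since $\bar v\prec v$ strictly, $\sch(\bar v)$ has strictly smaller dimension, so by the induction hypothesis $\sch(\bar v)\cap\xx_\gamma$ admits an affine paving. The complement $S(v)=\bigcup_{v'\in R(v)}Iv'K/K$ is open in $\sch(v)$, and by the construction of \S2.4 it decomposes into the ordered locally closed pieces $S_P^c(v)$, $P$ maximal, with the total order on the $S_r(v)$ described in the two cases following (\ref{choisirc}) (Figures~\ref{order1}, \ref{order2}), refined inside each $S_r(v)$ by the inverse Bruhat order on $W$. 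For each maximal $P$ I would choose $\ba_P\in\bz^d$ negative with respect to $P$ — here taking $\ba_P=\pm(n_1,n_2,n_2)$ as in the proof of Theorem~\ref{famgl3}, with the sign dictated by whether $\varpi_P$ or $-\varpi_P$ is the fundamental weight in $W\varpi_P$ — so that Theorem~\ref{famgl3} gives an affine paving of $S_P^c(v)\cap\xx_\gamma$. Concatenating the pavings of the $S_P^c(v)\cap\xx_\gamma$ in the prescribed order, then appending the paving of $\sch(\bar v)\cap\xx_\gamma$ underneath (the latter being closed in $\sch(v)$), yields an affine paving of $\sch(v)\cap\xx_\gamma$. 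If instead $\sch(v)^T=R(v)$, then $S(v)=\sch(v)$ and one applies the same decomposition into the $S_P^c(v)\cap\xx_\gamma$ directly, with no base layer $\sch(\bar v)$ needed.

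The two points that require care are: first, checking that the ordering of the strata $S_P^c(v)$ given in \S2.4 is genuinely compatible with the closure relations coming from the affine pavings of Theorem~\ref{famgl3} — i.e.\ that a paving of the total space is obtained by stacking the pavings of the pieces in that order, which follows because the closure of each $S_P^c(v)$ inside $S(v)$ meets only the $S_{P'}^c(v)$ that come earlier together with $\sch(\bar v)^T$, as recorded by the identities $\sch(\bar v)\cup\bigcup_{i\ge r}S_i(v)=\sch(v^{(r)})$ (resp.\ $\sch(\bar v)\cup\bigcup_{i\le r}S_i(v)=\sch(v_{(r)})$) and the Bruhat-order refinement within each $S_r(v)$. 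Second, one must make sure the induction is well-founded: at each step the parameter $c$ is chosen afresh according to (\ref{choisirc}) for the new apex $\bar v$, which again satisfies $\bar v_1\ge\bar v_2=\cdots=\bar v_{d-1}\ge\bar v_d$, so Theorem~\ref{famgl3} and Lemma~\ref{trivial1} remain applicable, and $\dim\sch(\bar v)<\dim\sch(v)$ guarantees termination. I expect the main obstacle to be purely bookkeeping — verifying that gluing the affine pavings of the locally closed strata along the stated partial order actually produces a filtration by closed subvarieties with affine successive quotients — since all the genuine geometric input is already contained in Theorem~\ref{famgl3} and the structure results of \S2.4. Purity of $\sch(v)\cap\xx_\gamma$ then follows from the standard fact that a variety with an affine paving has pure cohomology.
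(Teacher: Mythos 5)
Your proposal is correct and follows exactly the route the paper intends: the paper's proof of this corollary is literally the one-line remark that one combines the ``pavage en tranches'' of \S 2.4 with Theorem~\ref{famgl3}, and your write-up simply makes explicit the induction via Lemma~\ref{trivial1}, the ordering of the strata $S_P^c(v)$, and the stacking of the pavings, all of which is left implicit in the text. The only point worth stating explicitly is that for $d=3$ every dominant $v$ automatically satisfies $v_1\geq v_2=\cdots=v_{d-1}\geq v_d$, which is why the corollary holds for \emph{all} $v\in X_*^+(T)$ even though \S 2.4 imposes that shape condition.
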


\subsection{Pavage pour $\gl_4$ dans le cas nonramifi\'e}

Soit $\gamma\in \kt(\co)$ un élément régulier entier, le but de cette section est de démontrer le théorème suivant:

\begin{thm}\label{principal}

Pour tout $m\in \bn$, l'intersection $\xx_{\geq -m}\cap \xx_{\gamma}$ admet un pavage en espaces affines. Par conséquent, la cohomologie de $\xx_{\gamma}$ est pure.
\end{thm}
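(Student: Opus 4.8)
The strategy is to reduce the pavage of $\xx_{\geq -m}\cap \xx_{\gamma}$ for $\gl_4$ to the already-established results for $\gl_3$, via the retraction $f_P$ associated to a suitable maximal parabolic $P=MN$. Since $\gamma$ is non-ramifié, after conjugation by the Weyl group we may arrange that $\gamma$ lies in $\kt(\co)$ and that one of its eigenvalues is ``separated'' from the other three in a controlled way by the valuations of the roots; this choice determines a maximal parabolic $P=MN$ with $M\cong \gl_1\times\gl_3$ (or $\gl_3\times\gl_1$). We first cover $\xx_{\geq -m}$ by the Bruhat–Tits cells $C(w)=\xx_{\geq -m}\cap I_{\ba}wK/K$ of Proposition~\ref{pav1} for an appropriately chosen $\ba\in \bz^4$ that is negative with respect to $P$, so that by the limit computation \eqref{limitiwahori} the cell decomposition factors through $f_P$, i.e. $C(w)=U\,(C(w)\cap \xx^M)$ with $U=N(F)\cap I$.

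Next, for each cell, I would analyse the restriction $f_P:C(w)\cap \xx_{\gamma}\to (C(w)\cap \xx^M)\cap \xx^M_{\gamma}$. The key point is that $\ba$ negative with respect to $P$ makes the relevant Iwahori subgroup of $M$ normalize $\ku=\lie(U)$ and $\ad(\gamma)\ku$; combined with Lemma~\ref{normal} and Proposition~\ref{techfam} (or directly Corollary~\ref{technormal}), this shows $f_P:C(w)\cap \xx_{\gamma}\to (C(w)\cap \xx^M)\cap \xx^M_{\gamma}$ is a fibration in affine spaces over each $H$-orbit in the base, where $H$ is the Lie group cutting out $C(w)\cap \xx^M$ inside $\xx^M$. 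Then $(C(w)\cap \xx^M)\cap \xx^M_{\gamma}$ is, up to the $\gl_1$-factor which contributes nothing, an intersection of the form $\sch^{\gl_3}(v')\cap \xx^{\gl_3}_{\gamma'}$ (or a truncation $\xx^{\gl_3}_{\geq -m'}\cap \xx^{\gl_3}_{\gamma'}$) for the semi-simple regular element $\gamma'$ obtained from $\gamma$ by projection to the $\gl_3$-factor — an element which is again non-ramifié, mélangé, or elliptique. By Theorem~\ref{gl3} (and Corollary~\ref{gl3alt} in the Schubert-variety form) this base admits a pavage en espaces affines, and pulling it back through the affine-space fibration $f_P$ yields a pavage en espaces affines of $C(w)\cap\xx_{\gamma}$.

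Finally I would assemble the global pavage: order the cells $C(w)$ by (the modified Bruhat–Tits order) $\prec_{I_{\ba}}$ as in Proposition~\ref{pav1}(3), and within each $C(w)\cap\xx_{\gamma}$ refine by the pulled-back pavage of the $\gl_3$-base; one must check that the closure relations are compatible, i.e. that the filtration obtained by concatenating these pavages in order is by closed subvarieties with affine-space strata. This is where the main technical work lies: the closure of a stratum of $C(w)\cap \xx_\gamma$ may meet lower cells $C(w')\cap\xx_\gamma$ with $w'\prec_{I_{\ba}}w$, and one needs that the induced decomposition there is coarser than (refined by) the pavage already fixed on $C(w')\cap\xx_\gamma$. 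The flexibility in choosing $\ba_P$ negative with respect to $P$ — together with the ``tranches'' machinery of Proposition~\ref{tranche} and the family of non-standard $\gl_3$-pavages of Theorem~\ref{famgl3}, which is precisely why a single Lucarelli-type pavage does not suffice — is what allows these compatibility constraints to be met simultaneously across all cells. The purity of $\xx_\gamma$ then follows by taking $m\to\infty$, since a variety with a pavage en espaces affines has pure cohomology and $\xx_\gamma=\lim_{m\to+\infty}\xx_{\geq -m}\cap\xx_\gamma$.
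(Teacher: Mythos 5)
Your overall strategy — retract via $f_P$ for $P$ with Levi $M\cong\gl_1\times\gl_3$ and reduce to the $\gl_3$ results — is the paper's strategy, but there is a genuine gap in the middle of your argument: you work cell by cell, and that cannot work. For a single cell $C(w)$, the base of your fibration is $(C(w)\cap\xx^M)\cap\xx^M_\gamma=I^MwK^M/K^M\cap\xx^{M}_\gamma$, a \emph{single} Iwahori orbit of $\xx^{\gl_3}$ intersected with the Springer fiber — not a Schubert variety $\sch^{\gl_3}(v')\cap\xx^{\gl_3}_{\gamma'}$ nor a truncation $\xx^{\gl_3}_{\geq -m'}\cap\xx^{\gl_3}_{\gamma'}$ as you claim. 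Such an intersection can be singular (this is exactly Example \ref{typique}), so Theorem \ref{gl3}/Corollaire \ref{gl3alt} do not apply to it, and no per-cell paving exists that one could hope to glue. To get a truncated $\gl_3$-Grassmannian as base one must first group the cells into the slices $V_b=\bigsqcup_{w_1=b}C(w)$, for which $V_b\cap\xx^M\cong\xx^{\gl_3,(-b)}_{\geq -m}$; and then the retraction $f_P:V_b\cap\xx_\gamma\to V_b\cap\xx^M_\gamma$ is still \emph{not} globally an affine fibration, because the fibre $\ker\{\ad(\gamma):\ku/\ku\cap\Ad(g)\kg(\co)\to\cdots\}$ jumps in dimension: the relevant quantity $\sum_{i=2}^4\max\{w_i+m-n_1,0\}$ depends on $w$. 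The missing idea is the further cut of each $R_b$ into the pieces $\Sigma_0$, $\Sigma_{i,c'}$, $\Sigma^*_{i,c''}$ determined by comparing the coordinates $w_i$ to a cut value $c$ near $-m+n_1$; on each such piece the dimension above is constant, Proposition \ref{techfam} applies to the whole union of cells, and the corresponding base is precisely one of the pieces $S^c_P(v)$ that Théorème \ref{famgl3} knows how to pave. Your closing paragraph gestures at the "tranches" machinery, but without identifying this concrete decomposition the compatibility problem you flag is not just "technical work" — it is the content of the proof.

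A second omission: the argument genuinely splits into two cases according to the root valuation of $\gamma$ in minimal form, $(n_1,n_2,n_3)$ with $n_1\leq n_2\leq n_3$ versus $(n_1,n,n_2)$ with $n\leq n_1\leq n_2$. In the second case one cannot use the standard $I'$ throughout: the paper switches to $I'_{\ba}$ with $\ba=(n_1-n,n_1-n,0,0)$, splits $R_b$ into a closed part $R_{b1}$ and its complement, handles the complement by a further retraction to a second parabolic $P'$ with Levi $\gl_1\times\gl_1\times\gl_2$ (Lemme \ref{aa}), and only then runs the $\Sigma$-decomposition on $V_{b1}$ (Lemme \ref{bb}). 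Your proposal, which fixes one parabolic and one choice of $\ba$ "negative with respect to $P$" for all cells, does not account for this case distinction.
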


Le reste de la section est consacré à la démonstration du théorème. Quitte à conjuguer par le groupe de Weyl, on peut supposer que $\gamma\in \kt(\co)$ est en forme minimale (voir l'appendice), alors sa valuation radicielle est l'un des deux types suivants:

\begin{enumerate}

\item $(n_{1}, n_{2}, n_{3}), \, n_{1}\leq n_{2}\leq n_{3}$, 

\item $(n_{1},\, n,\,n_{2}), \,n\leq n_{1}\leq n_{2}.$
\end{enumerate}

Pour les deux types, il suffit de paver l'intersection de $\xx_{\gamma}$ avec le composant neutre $\xx_{\geq -m}^{(0)}$. Pour simplifier les notations, on le note encore $\xx_{\geq -m}$. On commence par paver $\xx_{\geq -m}$ en utilisant le sous-groupe d'Iwahori 
$$
I'=\Ad(\diag(\vep^{4m},1,1,1))I.
$$ 
On note $C(w)=\xx_{\geq -m}\cap I'wK/K$. D'après la proposition $\ref{pav1}$, on a un pavage en espaces affines 
$$
\xx_{\geq -m}=\bigsqcup_{w\in \xx_{\geq -m}^{T}}C(w)
$$ 
et 
$$
C(w)= \begin{bmatrix}\co&&&\\ \kp^{m_{w}}&\co&\cdots&\co\\ \vdots&\vdots&\ddots&\vdots\\ \kp^{m_{w}}&\kp&\cdots&\co\end{bmatrix}wK/K,\quad m_{w}=-m-w_{1}.
$$
Pour $-m\leq b \leq 3m,\, b\in \bz$, on note 
$$
R_{b}=\{w\in \xx_{\geq -m}^{T}\mid w_{1}=b\},\quad V_{b}=\bigsqcup_{w\in R_{b}}C(w).
$$
Alors $\xx_{\geq -m}=\bigcup_{b=-m}^{3m}V_{b}$, et pour tout $b$, on a 
$
\overline{V}_{b}=\bigcup_{i=b}^{3m}V_{i},
$
ce qui donne l'ordre de pavage entre les $V_{b}$. Voir la figure \ref{gl4vvv} pour ce découpage. Pour démontrer le théorème \ref{principal}, il suffit donc de paver chaque $V_{b}\cap \xx_{\gamma}$.

\begin{figure}[htbp]
\begin{center}

\begin{tikzpicture}

\draw (-2,1)--(2,1);
\draw (-2,1)--(0,-2.2);
\draw (2,1)--(0,-2.2);
\draw (-0.5,1.8)--(-2,1);
\draw (-0.5,1.8)--(2,1);
\draw[dashed] (-0.5,1.8)--(0,-2.2);
\draw [red](0.125,1.6) node[anchor=south]{$R_{b}$} --(-1,1);
\draw[red] (-1,1)--(0.5,-1.4);
\draw[dashed, red] (0.125,1.6)--(0.5,-1.4);

\draw[->, thick] (0,0.2)--(0.5,0.2) node[anchor=west]{$\alpha_{1}$};
\draw[->, thick] (0,0.2)--(0,0.7)node[anchor=west]{$\alpha_{3}$};
\draw[->, thick] (0,0.2)--(-0.3,-0.1) node[anchor=east]{$\alpha_{2}$};

\end{tikzpicture}

\caption{Découpage de $X_{*}(T)$ en $R_{b}$.}
\label{gl4vvv}
\end{center}
\end{figure}

On note 
$$
P=\begin{bmatrix}*&&&\\ *&*&*&*\\ *&*&*&*\\ *&*&*&*
\end{bmatrix}.
$$
Soit $P=MN$ sa factorisation de Levi. On note 
$$
U=\begin{bmatrix}1&&&\\ \kp^{-m-b}&1&&\\ \kp^{-m-b}&&1&\\ \kp^{-m-b}&&&1
\end{bmatrix},
$$ 
alors $C(w)=UI^{M}wK/K$.

\begin{lem}\label{gl4retractaffine}

La rétraction 
$
f_{P}:V_{b}\to V_{b}\cap \xx^{M}
$ 
est une fibration en espaces affines. 
\end{lem}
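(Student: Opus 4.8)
The statement to prove is Lemma \ref{gl4retractaffine}: the retraction $f_P : V_b \to V_b \cap \xx^M$ is an affine-space fibration. By the remark following Proposition \ref{techfam}, it suffices to show that $\dim_k\bigl(\ku/(\ku\cap\Ad(g)\kg(\co))\bigr)$ is independent of $gK\in V_b\cap\xx^M$, where $\ku=\lie(U)$; then Lemma \ref{quotvect} organizes these quotients into a vector bundle, and the canonical isomorphism $f_P^{-1}(mwK)\cong \ku/(\ku\cap\Ad(mw)\kg(\co))$ exhibits $V_b$ as a torsor under that bundle over $V_b\cap\xx^M$, hence an affine-space fibration.

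First I would unwind the description of $V_b$. By Proposition \ref{pav1}, $V_b=\bigsqcup_{w\in R_b} C(w)$ with $C(w)=UI^M wK/K$ and $R_b=\{w\in\xx_{\geq -m}^T\mid w_1=b\}$. A point of $V_b\cap\xx^M$ is of the form $mwK$ with $m\in I^M$ and $w\in R_b$. Since $I^M$ lies in the Levi $M$ and normalizes $N(F)$, it normalizes $U$ and hence $\ku$; therefore, exactly as in the proof of Lemma \ref{affine1},
$$
\frac{\ku}{\ku\cap\Ad(mw)\kg(\co)}\;\cong\;\frac{\ku}{\ku\cap\Ad(w)\kg(\co)},
$$
so the dimension only depends on the torus-fixed point $w\in R_b$, not on $m$.

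It then remains to check that this dimension is the same for all $w\in R_b$, i.e. that it depends only on $b=w_1$. Here I would compute directly: $\ku=\bigoplus_{i=2}^{4}\kg_{\alpha_{i,1}}\otimes \kp^{-m-b}$ (the root subspaces $\alpha_{i,1}$ with the valuation bound $-m-b$ coming from the shifted Iwahori $I'=\Ad(\diag(\vep^{4m},1,1,1))I$ and the truncation $\xx_{\geq -m}$), while $\Ad(w)\kg(\co)$ contributes, in the $\alpha_{i,1}$-component, the subspace $\kg_{\alpha_{i,1}}\otimes \kp^{w_1-w_i}=\kg_{\alpha_{i,1}}\otimes\kp^{b-w_i}$. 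Thus each summand of $\ku/(\ku\cap\Ad(w)\kg(\co))$ has $k$-dimension $\max(0,\;(b-w_i)-(-m-b))=\max(0,\;2b-w_i+m)$. One checks using $w\in\xx_{\geq -m}$ (so $w_i\geq -m$) and $w_1=b$ (dominance-type constraints) that in the relevant range this maximum is always attained by the linear expression, giving total dimension $\sum_{i=2}^4(2b-w_i+m)=3(2b+m)-\sum_{i\neq 1}w_i=3(2b+m)-(\deg-b)$ where $\deg=\sum_i w_i$ is fixed on the neutral component $\xx_{\geq -m}^{(0)}$; hence the dimension depends only on $b$. This is the one genuinely computational point and the main thing to get right; once the dimension count is uniform on $V_b\cap\xx^M$, Lemma \ref{quotvect} and the torsor argument finish the proof exactly as in Lemma \ref{affine1}.
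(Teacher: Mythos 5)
Votre stratégie globale est exactement celle du papier : on se ramène, via la remarque qui suit la proposition \ref{techfam} (c'est-à-dire le lemme \ref{quotvect} et la description des fibres comme torseurs), à montrer que $\dim_k\bigl(\ku/\ku\cap\Ad(g)\kg(\co)\bigr)$ ne dépend que de $b$ pour $gK\in V_b\cap\xx^M$, et la réduction de $g=mw$, $m\in I^M$, au point fixe $w$ par normalisation de $\ku$ sous $I^M$ est aussi l'étape du papier. Le seul problème est une erreur de signe dans le calcul final. L'espace radiciel en position $(i,1)$ est dilaté par $\Ad(\vep^w)$ du facteur $\vep^{\alpha_{i,1}(w)}=\vep^{w_i-w_1}$, donc la composante $(i,1)$ de $\ku\cap\Ad(w)\kg(\co)$ est $\kp^{\max(-m-b,\;w_i-b)}$ et non $\kp^{\max(-m-b,\;b-w_i)}$ comme vous l'écrivez. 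La dimension de chaque facteur du quotient est donc $\max(0,\;w_i+m)$, qui vaut $w_i+m$ précisément parce que $w\in\xx_{\geq-m}$ entraîne $w_i\geq-m$; le total est $\sum_{i=2}^{4}(w_i+m)=3m-b$ sur la composante neutre, comme dans le papier. Avec votre formule (erronée) $\max(0,\;2b-w_i+m)$, l'affirmation selon laquelle « le maximum est toujours atteint par l'expression linéaire » est en fait fausse sur une partie de l'intervalle $-m\leq b\leq 3m$ : pour $b=-m$ il faudrait $w_i\leq 2b+m=-m$ pour tout $i\geq 2$, ce qui contredit $\sum_{i\geq 2}w_i=-b=m$; telle quelle, la constance de la dimension sur $R_b$ n'est donc pas établie. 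Une fois le signe corrigé, votre argument est complet et coïncide avec la démonstration du papier.
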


\begin{proof}

On montre d'abord que la dimension de $\ku/\ku\cap \Ad(g)\kg(\co)$ est indépendante de $g$ pour $gK\in V_{b}\cap \xx^{M}$. Pour $g\in I^{M},\,w\in R_{b}$, on a
$$
\frac{\ku}{\ku\cap \Ad(gw)\kg(\co)}\cong \frac{\ku}{\ku\cap \Ad(w)\kg(\co)},$$
car  $I^{M}$ normalise $\ku$. Et la dimension de la dernière terme est 
$$
\sum_{i=2}^{4}[(w_{i}-w_{1})-(-m-w_{1})]=3m-w_{1}=3m-b.
$$
Donc les $\ku/\ku\cap \Ad(g)\kg(\co)$ s'organisent en un fibré vectoriel $\tilde{\ku}/\tilde{\ku}\cap \ckk$ sur $V_{b}\cap \xx^{M}$. La rétraction en question est un torseur sous ce fibré vectoriel, d'où le lemme.
\end{proof}

De plus, puisque $M=\gl_{1}\times \gl_{3}$, la projection vers le deuxième facteur donne un isomorphisme 
$$
V_{b}\cap \xx^{M}\cong \xx^{\gl_{3},(-b)}_{\geq -m}.
$$

Dans la suite, on distingue entre les deux types.

\subsubsection{Premier type}

On va couper $V_{b}$ en parties localement fermées telles que la restriction de la fibration 
$$
f_{P}:\xx_{\gamma}\cap V_{b}\to \xx^{M}_{\gamma}\cap V_{b}
$$ 
sur chaque partie est une fibration en espaces affines.

On fixe $c\in \br$ tel que 
$$
-m+n_{1}-1<c<-m+n_{1}.
$$ 
Pour $i=2,3,4$, on note 
\begin{eqnarray*}
\Sigma_{i}&=&\{w\in R_{b}\,\mid\,w_{i}<c,\,w_{j}>c,\,\forall j\neq i,\,j\neq 1\},\\
\Sigma^{*}_{i}&=&\{w\in R_{b}\,\mid\,w_{i}>c,\,w_{j}<c,\,\forall j\neq i,\,j\neq 1\},
\end{eqnarray*}
et on note
$$
\Sigma_{0}=
\begin{cases}
\{w\in R_{b}\,\mid\, w_{j}> c,\,\forall j\neq 1\},&\text{ si } c<\left[\frac{-b}{3}\right],\\
\{w\in R_{b}\,\mid\, w_{j}< c,\,\forall j\neq 1\},&\text{ si } c>\left[\frac{-b}{3}\right].
\end{cases}
$$
Alors $R_{b}=\Sigma_{0}\cup\bigcup_{i=2}^{4}(\Sigma_{i}\cup \Sigma_{i}^{*})$. Pour tout $c',\,c''\in \bz$, on note 
$$
\Sigma_{i,c'}=\{w\in \Sigma_{i}\,\mid\,w_{i}=c'\},\quad\Sigma^{*}_{i,c''}=\{w\in \Sigma^{*}_{i}\,\mid\,w_{i}=c''\}.
$$ 
Les figures \ref{gl43aaa} et \ref{gl43bbb} donnent les découpages dans les deux cas.

\begin{figure}
\begin{center}

\begin{tikzpicture}
\draw (-3.5, -2)--(3.5,2);
\draw (-3.5, 2)--(3.5,-2);
\draw (0, 3)--(0,-3.5);

\draw[red, dashed] (-2.5, 2.7) node[anchor=east]{$v_{1}=c$}--(1,-3.3);
\draw[red, dashed] (2.5, 2.7) node[anchor=east]{$v_{2}=c$}--(-1,-3.3);
\draw[red, dashed] (-3.7, 0.8) node[anchor=east]{$v_{3}=c$}--(3.7,0.8);

\draw[black]     (-1.962, 1.8) circle (2pt)
                 (1.962,1.8) circle (2pt)                 
                 (-2.54, 0.8) circle (2pt)
                 (2.54, 0.8) circle (2pt)
                 (0.577, -2.6) circle (2pt)                 
                 (-0.577, -2.6) circle (2pt);

\draw (-1.962, 1.8)-- node[anchor=south]{\color{blue} $\Sigma_{3,c'}$} (1.962,1.8);
\draw (-2.54, 0.8)-- node[anchor=east]{\color{blue}$\Sigma_{1,c'}$}(-0.577,-2.6);
\draw (2.54, 0.8)-- node[anchor=west]{\color{blue}$\Sigma_{2,c'}$}(0.577,-2.6);

\draw (1.962, 1.8)--node[anchor=west]{\color{blue}$\Sigma^{*}_{1,c''}$} (2.54, 0.8);
\draw (-1.962, 1.8)--node[anchor=east]{\color{blue}$\Sigma^{*}_{2,c''}$} (-2.54,0.8);
\draw (-0.577, -2.6)-- node[anchor=north]{\color{blue}$\Sigma^{*}_{3,c''}$}(0.577,-2.6);

\draw[black] (1.385, 0.8) circle (2pt)
             (-1.385, 0.8) circle (2pt)
             (0, -1.6) circle (2pt);
             
\draw[dashed, black] (-1.385,0.8)--(1.385,0.8);             
\draw[dashed, black] (-1.385,0.8)--(0,-1.6);
\draw[dashed, black] (0,-1.6)--(1.385,0.8);
\draw (0,0) node[anchor=north]{\color{blue}$\Sigma_{0}$};

\draw[->, thick](0,0)--(0.5,0) node[anchor=west]{$\alpha$};
\draw[->, thick](0,0)--(-0.25, 0.433) node[anchor=south]{$\beta$};
\end{tikzpicture}
\caption{Découpage de $R_{b}$ pour $c<\left[\frac{-b}{3}\right]$.}
\label{gl43aaa}
\end{center}
\end{figure}

\begin{figure}
\begin{center}

\begin{tikzpicture}
\draw (-3.5, -2)--(3.5,2);
\draw (-3.5, 2)--(3.5,-2);
\draw (0, 3)--(0,-3.5);

\draw[red, dashed] (-2.5, -2.7)--(1,3.3) node[anchor=west]{$v_{2}=c$};
\draw[red, dashed] (2.5, -2.7)--(-1,3.3) node[anchor=east]{$v_{1}=c$};
\draw[red, dashed] (-3.7, -0.8) node[anchor=east]{$v_{3}=c$}--(3.7,-0.8);

\draw[black]     (-1.962, -1.8) circle (2pt)
                 (1.962,-1.8) circle (2pt)                 
                 (-2.54, -0.8) circle (2pt)
                 (2.54, -0.8) circle (2pt)
                 (0.577, 2.6) circle (2pt)                 
                 (-0.577, 2.6) circle (2pt);

\draw (-1.962, -1.8)-- node[anchor=north]{\color{blue} $\Sigma^{*}_{3,c''}$} (1.962,-1.8);
\draw (-2.54, -0.8)-- node[anchor=east]{\color{blue}$\Sigma^{*}_{2,c''}$}(-0.577,2.6);
\draw (2.54, -0.8)-- node[anchor=west]{\color{blue}$\Sigma^{*}_{1,c''}$}(0.577,2.6);

\draw (1.962, -1.8)--node[anchor=west]{\color{blue}$\Sigma_{2,c'}$} (2.54, -0.8);
\draw (-1.962, -1.8)--node[anchor=east]{\color{blue}$\Sigma_{1,c'}$} (-2.54,-0.8);
\draw (-0.577, 2.6)-- node[anchor=south]{\color{blue}$\Sigma_{3,c'}$}(0.577,2.6);

\draw[black] (1.385, -0.8) circle (2pt)
             (-1.385, -0.8) circle (2pt)
             (0, 1.6) circle (2pt);
             
\draw[dashed, black] (-1.385,-0.8)--(1.385,-0.8);             
\draw[dashed, black] (-1.385,-0.8)--(0,1.6);
\draw[dashed, black] (0,1.6)--(1.385,-0.8);
\draw (0,0) node[anchor=north]{\color{blue}$\Sigma_{0}$};

\draw[->, thick](0,0)--(0.5,0) node[anchor=west]{$\alpha$};
\draw[->, thick](0,0)--(-0.25, 0.433) node[anchor=south]{$\beta$};
\end{tikzpicture}
\caption{Découpage de $R_{b}$ pour $c>\left[\frac{-b}{3}\right]$.}
\label{gl43bbb}
\end{center}
\end{figure}

\begin{lem}
La rétraction 
$$
f_{P}:\xx_{\gamma}\cap V_{b}\to \xx^{M}_{\gamma}\cap V_{b}
$$ 
induite une fibration en espaces affines
$$
f_{P}:\xx_{\gamma}\cap\bigcup_{w\in \Sigma_{0}}C(w)\to \xx^{M}_{\gamma}\cap\bigcup_{w\in \Sigma_{0}}C(w),
$$
et du même quand on remplace $\Sigma_{0}$ par $ \Sigma_{i,c'},\, \Sigma_{i,c''}^{*}$.
\end{lem}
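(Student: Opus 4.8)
The plan is to reduce the statement to Proposition \ref{techfam}, in the same spirit as the proof of Theorem \ref{famgl3}. Fix one of the index sets in question — call it $\Sigma$, so $\Sigma$ is one of $\Sigma_{0}$, $\Sigma_{i,c'}$, $\Sigma_{i,c''}^{*}$ — and set $Z_{\Sigma}=\bigcup_{w\in\Sigma}I^{M}wK^{M}/K^{M}$, a locally closed subvariety of $\xx^{M}$ contained in $V_{b}\cap\xx^{M}\cong\xx^{\gl_{3},(-b)}_{\geq -m}$. Since $C(w)=UI^{M}wK/K$ we have $\bigcup_{w\in\Sigma}C(w)=UZ_{\Sigma}$, and because $\gamma\in\km(F)$, Lemma \ref{tech0} shows that a point $ugK$ of $UZ_{\Sigma}$ can lie in $\xx_{\gamma}$ only if $gK^{M}\in\xx^{M}_{\gamma}$ (here one uses that $P$ is maximal, so $\kn$ is abelian and $\Ad(1+u)^{-1}\gamma=\gamma-[u,\gamma]$). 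Hence $f_{P}$ restricts to
$$
f_{P}:UZ_{\Sigma}\cap\xx_{\gamma}\longrightarrow X:=Z_{\Sigma}\cap\xx^{M}_{\gamma},
$$
and by Proposition \ref{techfam} it suffices to check that $\dim_{k}(\ku/\ku\cap\Ad(g)\kg(\co))$ and $\dim_{k}(\ad(\gamma)\ku/\ad(\gamma)\ku\cap\Ad(g)\kg(\co))$ are constant for $gK\in X$.

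The first dimension is settled by Lemma \ref{gl4retractaffine} and its proof: it equals $3m-b$ everywhere on $V_{b}\cap\xx^{M}$, in particular on $X$. For the second, I would first recall that, $\gamma$ being of the first type in minimal form, $\val(\gamma_{i}-\gamma_{1})=n_{1}$ for $i=2,3,4$ — this is built into the minimal form, and is exactly what singled out the interval $(-m+n_{1}-1,\,-m+n_{1})$ for $c$. Consequently $\ad(\gamma)\ku=\bigoplus_{i=2}^{4}\kp^{-m-b+n_{1}}e_{i,1}$ is merely a rescaling of $\ku$, hence is still normalised by $I^{M}$. Writing $g=\vep^{w}$ modulo $I^{M}$ with $w\in\Sigma$, a direct computation with $\Ad(\vep^{w})\kg(\co)=\{X:\val(X_{ij})\geq w_{i}-w_{j}\}$ gives
$$
\dim_{k}\frac{\ad(\gamma)\ku}{\ad(\gamma)\ku\cap\Ad(\vep^{w})\kg(\co)}=\sum_{i=2}^{4}\max\bigl(0,\ w_{i}+m-n_{1}\bigr).
$$
Since $c\in(-m+n_{1}-1,\,-m+n_{1})$ one has $w_{i}+m-n_{1}\geq 0\iff w_{i}>c$; combining this with the defining inequalities of $\Sigma_{0}$, $\Sigma_{i,c'}$, $\Sigma_{i,c''}^{*}$ and with the fact that $w_{1}=b$ and $\sum_{j}w_{j}$ are fixed on $R_{b}$, the sum is constant on each of these pieces — e.g. it equals $-b-c'+2(m-n_{1})$ on $\Sigma_{i,c'}$ and $c''+m-n_{1}$ on $\Sigma_{i,c''}^{*}$. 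Proposition \ref{techfam} then applies, and the ``et de même'' clause (replacing $\Sigma_{0}$ by $\Sigma_{i,c'}$ or $\Sigma_{i,c''}^{*}$) is the very same argument.

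I expect the real content to be precisely the computation of $\dim_{k}(\ad(\gamma)\ku/\ad(\gamma)\ku\cap\Ad(g)\kg(\co))$ together with the verification that the partition of $R_{b}$ makes it locally constant: on $V_{b}$ itself this dimension genuinely jumps, which is exactly why $f_{P}:\xx_{\gamma}\cap V_{b}\to\xx^{M}_{\gamma}\cap V_{b}$ is \emph{not} a fibration into affine spaces, and why $c$ had to be pinned down in that interval and the slices $\Sigma_{i}$ refined further by the value of $w_{i}$. Everything else is the torsor-under-a-vector-bundle mechanism of Lemma \ref{quotvect} and Proposition \ref{techfam}, already used in the $\gl_{3}$ case.
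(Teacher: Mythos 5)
Votre démonstration est correcte et suit essentiellement la même démarche que celle de l'article : réduction à la proposition \ref{techfam} via le lemme \ref{gl4retractaffine}, puis calcul explicite de $\dim_{k}\bigl(\ad(\gamma)\ku/\ad(\gamma)\ku\cap\Ad(\vep^{w})\kg(\co)\bigr)=\sum_{i=2}^{4}\max(0,\,w_{i}+m-n_{1})$ et vérification de sa constance sur chaque pièce $\Sigma_{0}$, $\Sigma_{i,c'}$, $\Sigma_{i,c''}^{*}$, avec les mêmes valeurs que dans le texte. Les précisions supplémentaires (restriction bien définie de $f_{P}$ via le lemme \ref{tech0}, écriture explicite de $\ad(\gamma)\ku$ en forme minimale) sont exactes et ne changent pas l'argument.
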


\begin{proof}

Puisque on a déjà le lemme \ref{gl4retractaffine}, d'après la proposition \ref{techfam}, il suffit de montrer que la dimension de 
$$
\frac{\ad(\gamma)\ku}{\ad(\gamma)\ku\cap \Ad(g)\kg(\co)}
$$
est indépendante de $g$ pour $gK$ dans $\xx^{M}_{\gamma}\cap\bigcup_{w\in \Sigma_{0}}C(w)$. Même argument si on remplace $\Sigma_{0}$ par $ \Sigma_{i,c'},\, \Sigma_{i,c''}^{*}$.

Pour $w\in R_{b}$, on a $C(w)\cap \xx^{M}=I^{M}wK/K$. Pour $g\in I^{M}$, on a
$$
\frac{\ad(\gamma)\ku}{\ad(\gamma)\ku\cap \Ad(gw)\kg(\co)}\cong \frac{\ad(\gamma)\ku}{\ad(\gamma)\ku\cap \Ad(w)\kg(\co)},$$
car $I^{M}$ normalise $\ad(\gamma)\ku$. La dimension du dernier terme est
$$
\sum_{i=2}^{4}\max\{w_{i}+m-n_{1},\,0\}.
$$

\begin{enumerate}

\item Pour $c>\left[\frac{-b}{3}\right]$, $w\in \Sigma_{0}$, la dimension est $0$.

\item Pour $c<\left[\frac{-b}{3}\right]$, $w\in \Sigma_{0}$, la dimension est 
\begin{eqnarray*}
\sum_{j=2}^{4}(w_{j}+m-n_{1})=3(m-n_{1})-b,\end{eqnarray*}
constante sur $\Sigma_{0}$.

\item Pour $w\in\Sigma_{i,c'}$, la dimension est 
\begin{eqnarray*}
\sum_{j=2, \,j\neq i}^{4}(w_{j}+m-n_{1})&=&2(m-n_{1})-b-c',\end{eqnarray*}
constante sur $\Sigma_{i,c'}$.

\item Pour $w\in\Sigma^{*}_{i,c''}$, la dimension est 
$$
w_{i}+m-n_{1}=c''+m-n_{1},
$$ 
constante sur $\Sigma^{*}_{i,c''}$.
\end{enumerate}
\end{proof}

Puisque $M=\gl_{1}\times \gl_{3}$, d'après le théorème $\ref{famgl3}$ et le corollaire $\ref{gl3alt}$, les intersections
$$
\xx_{\gamma}^{M}\cap\bigcup_{w\in \Sigma_{0}}C(w),\quad \xx_{\gamma}^{M}\cap\bigcup_{w\in \Sigma_{i,c'}}C(w),\quad \xx_{\gamma}^{M}\cap\bigcup_{w\in \Sigma_{i,c''}^{*}}C(w).
$$
admettent des pavages en espaces affines, et on a vu comment les ordonner pour en déduire un pavage en espaces affines de $\xx_{\gamma}^{M}\cap V_{b}$. D'après le lemme précédent, les intersections 
$$
\xx_{\gamma}\cap\bigcup_{w\in \Sigma_{0}}C(w),\quad \xx_{\gamma}\cap\bigcup_{w\in \Sigma_{i,c'}}C(w),\quad \xx_{\gamma}\cap\bigcup_{w\in \Sigma_{i,c''}^{*}}C(w).
$$ 
admettent aussi des pavages en espaces affines, et en utilisant le même ordre que leurs analogues ci-dessus, on en déduit un pavage en espaces affines de $\xx_{\gamma}\cap V_{b}$. Le théorème est donc démontré pour ce type.

\subsubsection{Deuxième type}

On pave $V_{b}$ avec l'Iwahori $I'_{\ba}:=\Ad(\vep^{\ba})I'$, où 
$$
\ba=(n_{1}-n,n_{1}-n,0,0)\in \bn^{4}.
$$ 
On note $C_{\ba}(w)=V_{b}\cap I'_{\ba}wK/K$. Soit
$$
H=I'^{M}_{\ba}\cap \Ad(\diag(1,1,\vep^{m_{w}},\vep^{m_{w}}))K^{M},
$$ 
où $m_{w}=-m-w_{2}$. Alors les mêmes arguments que ceux dans la  démonstration de la proposition $\ref{tranche}$ et du lemme \ref{affine2} montrent que 

\begin{lem}\label{gl4pavnon}

On a un pavage en espaces affines
$$
V_{b}=\bigsqcup_{w\in R_{b}}C_{\ba}(w),
$$ 
où $C_{\ba}(w)=UHwK/K$, et la rétraction
$$
f_{P}:UHwK/K\to HwK^{M}/K^{M}
$$
est une fibration en espaces affines. 
\end{lem}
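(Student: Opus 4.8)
The plan is to follow, essentially word for word, the proofs of Proposition~\ref{tranche} and Corollaire~\ref{affine2}, with $V_{b}$ now in the role of $S_{P}^{c}(v)$. Write $\ba''=\ba+(4m,0,0,0)=(4m+n_{1}-n,\,n_{1}-n,\,0,\,0)\in\bz^{4}$, so that $I'_{\ba}=\Ad(\vep^{\ba''})I$, and let $\tlambda_{\ba''}\in X_{*}(\mt)$ be the cocharacter attached to $\ba''$ by (\ref{tlam}). Here $P$ has Levi $M=\gl_{1}\times\gl_{3}$ occupying the blocks $\{1\}$ and $\{2,3,4\}$, so $J_{P}=\{2,3,4\}$ and $\bar J_{P}=\{1\}$. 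The first point to settle is that $\ba''$ is négatif par rapport à $P$: this means $a''_{1}>a''_{i}$ for $i=2,3,4$, which is clear since $a''_{1}=4m+(n_{1}-n)\geq4m$ while $a''_{2}=n_{1}-n$ and $a''_{3}=a''_{4}=0$ (I take $m\geq1$; for $m=0$ the variety $\xx_{\geq0}$ is a point, and since $R_{3m}=\{(3m,-m,-m,-m)\}$ reduces to a single point, $b\leq3m-1$ may also be assumed, the cell $V_{3m}$ being handled directly by Lemme~\ref{gl4retractaffine}).

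Next I would run the argument of Proposition~\ref{tranche}. First, record that $f_{P}(V_{b})=V_{b}\cap\xx^{M}$ — indeed $f_{P}(um'\vep^{w}K)=m'\vep^{w}K^{M}$ for $u\in U\subset N(F)$ and $m'\in I^{M}$ — and that, by Lemme~\ref{gl4retractaffine}, the retraction $f_{P}:V_{b}\to V_{b}\cap\xx^{M}$ is already a fibration in affine spaces, a torsor under the vector bundle $\tilde{\ku}/\tilde{\ku}\cap\ckk$. Since $\ba''$ is négatif par rapport à $P$, for every $u$ in the unipotent group $U$ of the statement — whose non-trivial entries lie in the first column with valuation $\geq-m-b$ — one gets $\Ad(\tlambda_{\ba''}(t))u\to1$ as $t\to0$ (this is exactly where $b\leq3m-1$ is used). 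Hence
$$
\lim_{t\to0}\tlambda_{\ba''}(t)\,x=\lim_{t\to0}\tlambda_{\ba''}(t)\,f_{P}(x)\qquad\text{for all }x\in V_{b},
$$
and since $I'_{\ba}wK/K=\{x\mid\lim_{t\to0}\tlambda_{\ba''}(t)x=wK\}$ this identity gives $C_{\ba}(w)=V_{b}\cap I'_{\ba}wK/K=f_{P}^{-1}\bigl((V_{b}\cap\xx^{M})\cap I'^{M}_{\ba}wK^{M}/K^{M}\bigr)$, which is non-empty precisely for $w\in R_{b}$ (as $V_{b}\cap\xx^{M}$ is a connected component of the relevant truncated affine Grassmannian of $M$); thus $V_{b}=\bigsqcup_{w\in R_{b}}C_{\ba}(w)$.

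It then remains to describe the base. Projection onto the $\gl_{3}$-factor identifies $V_{b}\cap\xx^{M}$ with $\xx^{\gl_{3},(-b)}_{\geq-m}$ and carries $I'^{M}_{\ba}$ to the standard Iwahori of $\gl_{3}$ translated by the $\{2,3,4\}$-block of $\ba''$, so Proposition~\ref{pav1}, applied to this truncated affine Grassmannian of $\gl_{3}$, shows $(V_{b}\cap\xx^{M})\cap I'^{M}_{\ba}wK^{M}/K^{M}=HwK^{M}/K^{M}$ with $H=I'^{M}_{\ba}\cap\Ad(\diag(1,1,\vep^{m_{w}},\vep^{m_{w}}))K^{M}$ and $m_{w}=-m-w_{2}$, the last two columns receiving the extra valuation bound forced by $L\subset\vep^{-m}\co^{3}$. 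Therefore $C_{\ba}(w)=UHwK/K$, the decomposition $V_{b}=\bigsqcup_{w\in R_{b}}C_{\ba}(w)$ is a paving in affine spaces, and finally $f_{P}:UHwK/K\to HwK^{M}/K^{M}$ is a fibration in affine spaces simply as the restriction of the affine fibration $f_{P}:V_{b}\to V_{b}\cap\xx^{M}$ of Lemme~\ref{gl4retractaffine} to the locally closed base $HwK^{M}/K^{M}$ — this last step being verbatim the proof of Corollaire~\ref{affine2}.

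The one genuinely new ingredient, and the step I expect to need the most care, is the displayed limit identity: concretely, the négativité of $\ba''$ together with the precise valuation bookkeeping defining $U$ and $H$, and the handling of the boundary cases $m=0$ and $b=3m$. Since this is a direct computation in matrix coordinates modelled on the proof of Proposition~\ref{tranche}, I do not anticipate a real obstacle.
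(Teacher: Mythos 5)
Votre démonstration est correcte et suit exactement la route que l'article indique lui-même : la preuve du lemme y est réduite à la phrase « les mêmes arguments que ceux de la proposition \ref{tranche} et du lemme \ref{affine2} », et c'est précisément ce que vous exécutez, la négativité de $\ba''$ par rapport à $P$ jouant le rôle de l'hypothèse de la proposition \ref{tranche} et l'identité de limite $\lim_{t\to0}\tlambda_{\ba''}(t)x=\lim_{t\to0}\tlambda_{\ba''}(t)f_{P}(x)$ se ramenant à l'équation (\ref{limitiwahori}). Votre vérification explicite de cette limite (avec la contrainte $b\leq 3m-1$) et le traitement des cas limites $m=0$ et $b=3m$, passés sous silence dans l'article, constituent exactement le contenu laissé implicite.
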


\begin{lem}\label{redgl4}

La rétraction 
$$
f_{P}:\xx_{\gamma}\cap C_{\ba}(w)\to \xx_{\gamma}^{M}\cap HwK^{M}/K^{M}
$$ 
est une fibration en espaces affines.

\end{lem}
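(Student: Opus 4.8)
The plan is to deduce the lemma from Proposition \ref{techfam}, following verbatim the pattern of Theorem \ref{famgl3}; the only genuinely new point is a lattice computation showing that the group $H$ normalizes the $\co$-lattice $\ad(\gamma)\ku$, where $\ku=\lie(U)$.

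First I would record the two ingredients already available. By Lemma \ref{gl4pavnon} the retraction $f_P:C_\ba(w)=UHwK/K\to HwK^M/K^M$ is a fibration in affine spaces, so by the remark following Proposition \ref{techfam} the dimension of $\ku/\ku\cap\Ad(g)\kg(\co)$ is independent of $g$ for $gK\in HwK^M/K^M$, hence a fortiori for $gK$ in the subvariety $X:=HwK^M/K^M\cap\xx_\gamma^M$. Since $P$ is maximal and $\gamma\in\km(F)$, the map $f_P$ sends $\xx_\gamma\cap C_\ba(w)$ into $\xx_\gamma^M$ and one has $\xx_\gamma\cap C_\ba(w)=UX\cap\xx_\gamma$ (the same computation as in Lemma \ref{tech0}, using that $N$ is abelian here). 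By Proposition \ref{techfam} applied with this $X$, it then suffices to prove that the dimension of $\ad(\gamma)\ku/\ad(\gamma)\ku\cap\Ad(g)\kg(\co)$ is likewise independent of $g$ for $gK\in X$.

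For that last point I would invoke Lemma \ref{normal}, applied with $\ad(\gamma)\ku$ in the role of the normal subgroup's Lie algebra and with the $H$-orbit $HwK^M/K^M$ (which contains $X$): it yields the required constancy provided $H$ normalizes $\ad(\gamma)\ku$, and this normalization is the heart of the argument. Because $P$ is maximal with $M=\gl_1\times\gl_3$, the unipotent radical $N$ is abelian and $\ad(\gamma)$ acts diagonally on $\kn=\bigoplus_{i=2}^{4}\kg_{\alpha_{i,1}}$, scaling $\kg_{\alpha_{i,1}}$ by $\alpha_{i,1}(\gamma)$; for $\gamma$ in minimal form of the second type the valuations $\val(\alpha_{i,1}(\gamma))$ are $n_1,n,n$ for $i=2,3,4$, so $\ku$ and $\ad(\gamma)\ku$ are both $\co$-lattices of the shape $\bigoplus_i\kp^{e_i}\kg_{\alpha_{i,1}}$, their exponent vectors differing by $(n_1,n,n)$. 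One then reads off the valuation conditions cutting out the $\gl_3$-component of $H$: those coming from the twisted Iwahori $I'_\ba$ control the "upper" entries — and this is precisely why one takes $\ba=(n_1-n,n_1-n,0,0)$, so that $a_3-a_1=a_4-a_1=n-n_1\le0$ forces those entries to have valuation at least $n_1-n$ — while those coming from $\Ad(\diag(1,1,\vep^{m_w},\vep^{m_w}))K^M$ control the "lower" entries and supply the extra room; together they force the $\gl_3$-component of $H$ to stabilize $\ad(\gamma)\ku$. This is routine but slightly delicate bookkeeping, entirely parallel to the verification made inside the proof of Theorem \ref{famgl3}, and it is the step I expect to be the main obstacle. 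Once it is in place, Lemma \ref{normal} and Proposition \ref{techfam} combine to give that $f_P:\xx_\gamma\cap C_\ba(w)\to\xx_\gamma^M\cap HwK^M/K^M$ is a fibration in affine spaces.
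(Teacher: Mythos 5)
Your proposal is correct and follows essentially the same route as the paper: reduce via Proposition \ref{techfam} (together with Lemme \ref{gl4pavnon} for the constancy of $\dim\ku/\ku\cap\Ad(g)\kg(\co)$) to the constancy of $\dim\ad(\gamma)\ku/\ad(\gamma)\ku\cap\Ad(g)\kg(\co)$ on the $H$-orbit, which Lemme \ref{normal} gives once $H$ normalizes $\ad(\gamma)\ku$. The only cosmetic difference is that the paper obtains this normalization directly from the inclusion $H\subset I'^{M}_{\ba}$ (the choice $\ba=(n_{1}-n,n_{1}-n,0,0)$ already makes the whole twisted Iwahori $I'^{M}_{\ba}$ stabilize $\ad(\gamma)\ku$), so the extra valuation conditions coming from $\Ad(\diag(1,1,\vep^{m_{w}},\vep^{m_{w}}))K^{M}$ that you invoke are not actually needed for this step.
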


\begin{proof}

D'après la proposition \ref{techfam} et le lemme \ref{gl4pavnon}, il suffit de montrer que la dimension $\ad(\gamma)\ku/\ad(\gamma)\ku\cap \Ad(g)\kg(\co)$ est indépendante de $g$ pour $gK\in HwK^{M}/K^{M}$. Puisque $H\subset I'^{M}_{\ba}$, il normalise $\ad(\gamma)\ku$. Donc l'énoncé se déroule du lemme \ref{normal}.\end{proof}

On note 
$$
R_{b1}=\{w\in R_{b}\,\mid\,w_{2}\geq -m+n_{1}-n\},\quad V_{b1}=\bigsqcup_{w\in R_{b1}}C_{\ba}(w). 
$$
C'est une sous-variété fermée de $V_{b}$. Voir le figure \ref{gl43rb} pour avoir une idée de $R_{b1}$. Dans les lemmes \ref{aa}, \ref{bb}, on va paver $(V_{b}\backslash V_{b1})\cap \xx_{\gamma}$ et $V_{b1}\cap \xx_{\gamma}$ en espaces affines. La réunion de ces deux pavages nous donnera un pavage en espaces affines de $V_{b}\cap \xx_{\gamma}$, ce qui terminera la démonstration pour le deuxième type.

\begin{figure}
\begin{center}

\begin{tikzpicture}
\draw (-3.5, -2)--(3.5,2);
\draw (-3.5, 2)--(3.5,-2);
\draw (0, 3)--(0,-3.5);

\draw[black] (-2.77,1.6)--(2.77,1.6);             
\draw[black] (-2.77,1.6)--(0,-3.2);
\draw[black] (0,-3.2)--(2.77,1.6);
\draw (1,0.5) node{\color{blue}$R_{b1}$};

\draw[red] (0.5,-2.415)--(-1.76,1.6);
\draw[red, dashed] (-2.5, 2)--(4.5,-2);
\draw[red, dashed] (0.5, 3)--(0.5,-3.5);

\draw[->, thick](0,0)--(0.5,0) node[anchor=west]{$\alpha$};
\draw[->, thick](0,0)--(-0.25, 0.433) node[anchor=south]{$\beta$};
\end{tikzpicture}
\caption{$R_{b1}$ dans $R_{b}$.}
\label{gl43rb}
\end{center}
\end{figure}

\begin{lem}\label{aa}

La sous-variété ouverte $(V_{b}\backslash V_{b1})\cap \xx_{\gamma}$ de $V_{b}\cap \xx_{\gamma}$ admet un pavage en espaces affines. Plus précisément, pour $w\in R_{b}\backslash R_{b1}$, l'intersection $C_{\ba}(w)\cap \xx_{\gamma}$ est isomorphe à un espace affine.\end{lem}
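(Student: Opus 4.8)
Le plan est de reproduire, un cran plus bas, la méthode de descente parabolique déjà employée : on descend de $\gl_{4}$ à $\gl_{3}$ par la rétraction $f_{P}$, puis de $\gl_{3}$ à $\gl_{2}$ par une seconde rétraction parabolique, le cas de $\gl_{2}$ (où tout élément est équivalué) étant connu par \cite{gkm1}.

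D'abord, d'après le lemme \ref{redgl4}, la rétraction $f_{P}:\xx_{\gamma}\cap C_{\ba}(w)\to \xx^{M}_{\gamma}\cap HwK^{M}/K^{M}$ est une fibration en espaces affines; il suffira donc de montrer que $\xx^{M}_{\gamma}\cap HwK^{M}/K^{M}$ est isomorphe à un espace affine. En écrivant $\gamma=(\gamma_{1},\gamma')$ avec $\gamma_{1}\in\ggl_{1}(F)$ et $\gamma'\in\ggl_{3}(F)$, la projection de $M=\gl_{1}\times\gl_{3}$ sur son second facteur identifiera $\xx^{M}_{\gamma}$ à $\xx^{\gl_{3}}_{\gamma'}$ et $HwK^{M}/K^{M}$ à un pavé $H'w'K^{\gl_{3}}/K^{\gl_{3}}$ avec $w'=(w_{2},w_{3},w_{4})$, où $H'$ est le sous-groupe de Lie de $\gl_{3}(F)$ obtenu en restreignant $H$, à savoir, en coordonnées du bloc $\gl_{3}$,
$$
H'=\begin{bmatrix}\co^{\times}&\vep^{n_{1}-n}\co&\vep^{n_{1}-n}\co\\ \vep^{m_{w}}\co&\co^{\times}&\co\\ \vep^{m_{w}}\co&\kp&\co^{\times}\end{bmatrix},\qquad m_{w}=-m-w_{2}.
$$
On sera ainsi ramené à montrer que $\xx^{\gl_{3}}_{\gamma'}\cap H'w'K^{\gl_{3}}/K^{\gl_{3}}$ est isomorphe à un espace affine.

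Comme $\gamma$ est de deuxième type, la valeur propre $\gamma_{2}$ est à distance $n$ des deux autres, tandis que $\gamma_{3}$ et $\gamma_{4}$ sont à distance $n_{2}\geq n$; je prendrai donc le sous-groupe parabolique maximal $Q=M_{Q}N_{Q}$ de $\gl_{3}$ de facteur de Levi $M_{Q}=\gl_{1}\times\gl_{2}$ isolant l'indice $2$, avec $N_{Q}$ du même côté que $N$ (entrées sous-diagonales de la première colonne du bloc $\gl_{3}$). C'est ici qu'intervient de façon essentielle l'hypothèse $w\notin R_{b1}$, c'est-à-dire $w_{2}\leq -m+n_{1}-n-1$ : elle donne $n_{1}-n+w_{3}\geq -m+n_{1}-n>w_{2}$ et de même avec $w_{4}$, ce qui force la composante de $H'$ dans $\overline{N}_{Q}$ (entrées $\vep^{n_{1}-n}\co$ de la première ligne du bloc $\gl_{3}$) à fixer le réseau $\vep^{w'}\co^{3}$. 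Grâce à la factorisation d'Iwahori de $H'$, on en déduira l'égalité $H'w'K^{\gl_{3}}/K^{\gl_{3}}=U_{Q}\cdot X$, où $U_{Q}=N_{Q}(F)\cap H'$ et où $X=\big(M_{Q}(F)\cap H'\big)w'K^{M_{Q}}/K^{M_{Q}}$ est une $\big(M_{Q}(F)\cap H'\big)$-orbite dans $\xx^{M_{Q}}$. Comme $M_{Q}(F)\cap H'$ est bloc-diagonal, il normalise $\ku_{Q}=\lie(N_{Q})$, et comme $\gamma'$ est diagonal il normalise aussi $\ad(\gamma')\ku_{Q}$; le corollaire \ref{technormal} (au moyen du lemme \ref{normal} et de la proposition \ref{techfam}) montrera alors que $f_{Q}:\xx^{\gl_{3}}_{\gamma'}\cap H'w'K^{\gl_{3}}/K^{\gl_{3}}\to \xx^{M_{Q}}_{\gamma'}\cap X$ est une fibration en espaces affines. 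Enfin $\xx^{M_{Q}}_{\gamma'}$ s'identifie à $\xx^{\gl_{2}}_{\gamma''}$ pour $\gamma''=\diag(\gamma_{3},\gamma_{4})$, et $X$ correspond à un pavé de Bruhat-Tits standard de $\xx^{\gl_{2}}$ : son intersection avec $\xx^{\gl_{2}}_{\gamma''}$ est un espace affine, puisque tout élément de $\ggl_{2}$ est équivalué et que le cas équivalué est traité dans \cite{gkm1}. En composant ces trois fibrations en espaces affines on conclura, et le pavage de $(V_{b}\backslash V_{b1})\cap\xx_{\gamma}$ suivra en restreignant à $R_{b}\backslash R_{b1}$ l'ordre de pavage du lemme \ref{gl4pavnon}.

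Le point le plus délicat sera l'égalité $H'w'K^{\gl_{3}}/K^{\gl_{3}}=U_{Q}\cdot X$ : il faut non seulement contrôler l'action de la partie ``haute'' de $H'$ sur $\vep^{w'}\co^{3}$ — c'est exactement là qu'est utilisée l'inégalité $w_{2}\leq -m+n_{1}-n-1$ — mais aussi s'assurer que le long de $X$ les dimensions de $\ku_{Q}/\ku_{Q}\cap\Ad(g)\kg(\co)$ et de $\ad(\gamma')\ku_{Q}/\ad(\gamma')\ku_{Q}\cap\Ad(g)\kg(\co)$ restent constantes, condition nécessaire pour que $f_{Q}$ soit effectivement une fibration en espaces affines et non une simple rétraction.
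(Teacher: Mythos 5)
Votre démonstration suit essentiellement la même stratégie que celle du papier : réduction par le lemme \ref{redgl4}, puis observation que l'hypothèse $w_{2}<-m+n_{1}-n$ rend triviale l'action des entrées $\vep^{n_{1}-n}\co$ de la deuxième ligne sur le réseau, factorisation $H'w'K/K=U_{Q}\cdot X$, et seconde descente parabolique vers le Levi $\gl_{1}\times\gl_{1}\times\gl_{2}$ via le corollaire \ref{technormal} (votre parabolique $Q$ de $\gl_{3}$ est exactement le $P'$ du papier vu dans le bloc $\gl_{3}$). L'argument est correct et complet.
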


\begin{proof}

D'après le lemme $\ref{redgl4}$, il suffit de montrer que $HwK^{M}/K^{M}\cap \xx_{\gamma}^{M}$ est isomorphe à un espace affine. Puisque 
$$
w_{2}-w_{j}\leq w_{2}-(-m)<n_{1}-n,\quad j=3,4,
$$
on a
$$
HwK^{M}/K^{M}=\begin{bmatrix}\co&&&\\ &\co&&\\ &\kp^{m_{w}}&\co&\co\\ &\kp^{m_{w}}&\kp&\co\end{bmatrix}wK^{M}/K^{M}.
$$
On note 
$$
H'=\begin{bmatrix}\co&&&\\ &\co&&\\ &&\co&\co\\ &&\kp&\co\end{bmatrix},\quad U'=\begin{bmatrix}1&&&\\ &1&&\\ &\kp^{m_{w}}&1&\\ &\kp^{m_{w}}&&1\end{bmatrix}.
$$
Ce sont des groupes de Lie et on a $HwK^{M}/K^{M}=U'H'wK^{M}/K^{M},\,\forall w\in R_{b}\backslash R_{b1}$. On note
$$
P'=\begin{bmatrix}*&&&\\ &*&&\\ &*&*&*\\ &*&*&*
\end{bmatrix},
$$
et $P'=M'N'$ sa factorisation de Levi. Parce que $H'$ normalise $\ku'$ et $\ad(\gamma)\ku'$, le corollaire $\ref{technormal}$ implique que la rétraction 
$$
f_{P'}: U'H'wK^{M}/K^{M}\cap \xx_{\gamma}^{M}\to H'wK^{M'}/K^{M'}\cap \xx_{\gamma}^{M'}
$$ 
est une fibration en espaces affines. De plus, l'intersection $H'wK^{M'}/K^{M'}\cap \xx_{\gamma}^{M'}$ est isomorphe à un espace affine car $M'=\gl_{1}\times \gl_{1}\times \gl_{2}$, d'où le lemme. 
\end{proof}

\begin{lem}\label{bb}
L'intersection $V_{b1}\cap \xx_{\gamma}$ admet un pavage en espaces affines.\end{lem}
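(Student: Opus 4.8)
Le plan est de se ramener, via la rétraction $f_{P}$, au pavage de $\sch(v)\cap\xx_{\gamma}$ pour $\gl_{3}$ établi au corollaire \ref{gl3alt}. On reprend les notations du lemme \ref{redgl4}: $V_{b1}=\bigsqcup_{w\in R_{b1}}C_{\ba}(w)$ avec $C_{\ba}(w)=UHwK/K$ et $f_{P}(C_{\ba}(w))=HwK^{M}/K^{M}$, de sorte que $\xx_{\gamma}\cap V_{b1}$ est l'image réciproque de $\xx_{\gamma}^{M}\cap V_{b1}$ par la restriction de $f_{P}$ à $V_{b1}$; et le lemme \ref{redgl4} affirme que sur chaque cellule $C_{\ba}(w)$ la rétraction $f_{P}:\xx_{\gamma}\cap C_{\ba}(w)\to\xx_{\gamma}^{M}\cap HwK^{M}/K^{M}$ est une fibration en espaces affines, de rang $r_{w}$ constant sur $HwK^{M}/K^{M}\cap\xx_{\gamma}^{M}$ (lemmes \ref{normal} et \ref{techfam}, puisque $H\subset I'^{M}_{\ba}$ normalise $\ku$ et $\ad(\gamma)\ku$). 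Il suffit donc de construire un pavage en espaces affines de $\xx_{\gamma}^{M}\cap V_{b1}$ dont chaque strate est contenue dans une seule cellule $HwK^{M}/K^{M}$: comme l'image réciproque par $f_{P}$ d'un fermé est fermée et celle d'une telle strate $\cong\baa^{N}$ est $\cong\baa^{N+r_{w}}$, un tel pavage se relève en un pavage en espaces affines de $\xx_{\gamma}\cap V_{b1}$.

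Le point central est d'identifier $\xx_{\gamma}^{M}\cap V_{b1}$. Le facteur $\gl_{1}$ de $M$ étant trivial sur $V_{b}$, la projection sur le facteur $\gl_{3}$ donne $V_{b}\cap\xx^{M}\cong\xx^{\gl_{3},(-b)}_{\geq -m}$ avec le pavage de Bruhat--Tits attaché à l'Iwahori $I^{\gl_{3}}_{\ba'}$, où $\ba'=(n_{1}-n,0,0)$ est la composante $\gl_{3}$ de $I'_{\ba}$; la sous-variété fermée $V_{b1}\cap\xx^{M}$ est alors la réunion des cellules d'indice $w'\in X_{*}(T^{\gl_{3}})$ tel que $w'_{1}\geq -m+n_{1}-n$. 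En conjuguant par $\vep^{-\ba'}$, qui fixe la composante diagonale $\gamma'\in\kt^{\gl_{3}}(\co)$ de $\gamma$ sur les trois derniers indices, cette condition devient $\tilde{w}_{1}\geq -m$, et les points fixes de $T^{\gl_{3}}$ dans $\vep^{-\ba'}(V_{b1}\cap\xx^{M})$ sont exactement $\{\tilde{w}\mid\tilde{w}_{i}\geq -m\ \forall i,\ \textstyle\sum_{i}\tilde{w}_{i}=\delta\}$ avec $\delta=-b-(n_{1}-n)$ (ensemble vide, donc $V_{b1}$ vide, lorsque $\delta<-3m$). Puisque $\vep^{-\ba'}(V_{b1}\cap\xx^{M})$ et la variété de Schubert $\sch^{\gl_{3}}(v_{0})=\xx^{\gl_{3},(\delta)}_{\geq -m}$ (où $v_{0}=(\delta+2m,-m,-m)$) sont deux sous-variétés fermées $T^{\gl_{3}}$-stables de la grassmannienne tronquée $\{L\mid L\subset\vep^{-m-\ba'}\co^{3}\}$ ayant les mêmes points fixes, elles coïncident, d'où $\vep^{-\ba'}(\xx_{\gamma}^{M}\cap V_{b1})=\sch^{\gl_{3}}(v_{0})\cap\xx^{\gl_{3}}_{\gamma'}$.

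On applique enfin le corollaire \ref{gl3alt} (via le théorème \ref{famgl3}) à $\sch^{\gl_{3}}(v_{0})\cap\xx^{\gl_{3}}_{\gamma'}$: il fournit un pavage en espaces affines, le pavage en tranches de \S2.4, dont chaque strate est par construction contenue dans une unique cellule de Bruhat--Tits de $\xx^{\gl_{3}}$; après retour par $\vep^{\ba'}$ chaque strate est donc contenue dans une unique cellule $HwK^{M}/K^{M}$, et le relèvement du premier paragraphe conclut. Le principal obstacle sera la vérification combinatoire du deuxième paragraphe — le calcul explicite des cellules de $V_{b1}\cap\xx^{M}$ montrant qu'après conjugaison par $\vep^{-\ba'}$ on tombe exactement sur $\sch^{\gl_{3}}(v_{0})$, cas dégénérés compris — ainsi que le contrôle de la compatibilité entre le pavage en tranches pour $\gl_{3}$ et la décomposition en cellules $HwK^{M}/K^{M}$, sans lequel le relèvement n'est pas licite.
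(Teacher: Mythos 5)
Your overall architecture matches the paper's: identify $V_{b1}\cap\xx^M$ with a truncated $\gl_3$ Grassmannian via translation by $\vep^{-\ba}$ (the paper's isomorphism (\ref{translategl3})), pave it using Theorem \ref{famgl3} and Corollary \ref{gl3alt}, and lift through $f_P$ using Lemma \ref{redgl4}. But your lifting step rests on a claim that is false: the strata of the slicing paving of \S 2.4 are \emph{not} contained in single Bruhat--Tits cells $HwK^{M}/K^{M}$. Each stratum $S_P^c(v)\cap I_{\ba}wK/K\cap\xx_\gamma$ is cut out by a \emph{non-standard} Iwahori $I_{\ba}$ and typically meets several standard cells --- regrouping pieces of distinct standard cells is precisely the purpose of the non-standard pavings (cf.\ Example \ref{typique}). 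You flag this ``compatibilité'' yourself in your last sentence, but it is not a verification left to carry out: the condition simply fails, so the cell-by-cell lifting criterion of your first paragraph cannot be applied to the paving that Corollary \ref{gl3alt} actually produces.

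The missing idea is that one must lift slice by slice rather than cell by cell. The paper chooses the $\gl_3$ slicing parameter $c$ adapted to the root valuations of $\gamma$, namely $n-m-1<c<n-m$, decomposes $R_{b1}$ into the corresponding pieces $\Sigma_0$, $\Sigma_{i,c'}$, $\Sigma^{*}_{i,c''}$, and proves by an explicit computation that $\dim\bigl(\ad(\gamma)\ku/\ad(\gamma)\ku\cap\Ad(w)\kg(\co)\bigr)$, equal to $\max\{w_{2}+m-n_{1},0\}+\sum_{i=3}^{4}\max\{w_{i}+m-n,0\}$, is constant on each piece. Combined with Lemma \ref{gl4retractaffine} and Proposition \ref{techfam}, this makes $f_P$ an affine-space fibration over each \emph{whole} piece $\xx_\gamma^M\cap\bigsqcup_{w\in\Sigma}C_{\ba}(w)$, not merely over each cell; since the strata of the $\gl_3$ paving of Theorem \ref{famgl3} are by construction contained in single slices, the paving then lifts piece by piece. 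Your argument contains neither this choice of $c$ nor the constancy computation, and these are the substance of the proof.
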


\begin{proof}

En utilisant le lemme $\ref{redgl4}$, on va déduire un pavage de $V_{b1}\cap \xx_{\gamma}$ d'un pavage de $V_{b1}\cap \xx_{\gamma}^{M}$.

Pour $w\in R_{b1}$, on a 
$$
HwK^{M}/K^{M}=I_{\ba}'^{M}wK^{M}/K^{M}.
$$ 
La composition de la translation sur $\xx^{M}$ par $\vep^{-\ba}$ et la projection à la facteur $\gl_{3}$ de $M=\gl_{1}\times \gl_{3}$ donne un isomorphisme 
\begin{equation}\label{translategl3}
V_{b1}\cap \xx^{M}\cong \xx_{\geq -m}^{\gl_{3},(-b+n-n_{1})}.
\end{equation}
Avec cet isomorphisme, on peut translater le pavage pour $\gl_{3}$ à un pavage pour $V_{b1}\cap \xx_{\gamma}^{M}$. On fixe $c\in \br$ tel que
$$
n-m-1<c<n-m.
$$
On note $w_{2}'=w_{2}-(n_{1}-n)$ et $w_{j}'=w_{j},\,j=3,4$. Pour $i=2,3,4$, on note
\begin{eqnarray*}
\Sigma_{i}&=&\{w\in R_{b1}\,\mid\,w'_{i}< c,\,w'_{j}> c,\,\forall j\neq 1,i\},\\
\Sigma^{*}_{i}&=&\{w\in R_{b1}\,\mid\,w'_{i}>c,\,w'_{j}<c,\,\forall j\neq 1,i\},
\end{eqnarray*}
et on note
$$
\Sigma_{0}=
\begin{cases}
\{w\in R_{b1}\,\mid\, w'_{j}> c,\,\forall j\neq 1\},&\text{ si } c<\left[\frac{-b+n-n_{1}}{3}\right],\\
\{w\in R_{b1}\,\mid\, w'_{j}< c,\,\forall j\neq 1\},&\text{ si } c>\left[\frac{-b+n-n_{1}}{3}\right].
\end{cases}
$$
Alors $R_{b1}=\Sigma_{0}\cup \bigcup_{i=2}^{4}(\Sigma_{i}\cup \Sigma_{i}^{*})$. Pour tout $c',\,c''\in \bz$, on note 
$$
\Sigma_{i,c'}=\{w\in \Sigma_{i}\,\mid\,w'_{i}=c'\},\quad\Sigma^{*}_{i,c''}=\{w\in \Sigma^{*}_{i}\,\mid\,w'_{i}=c''\}.
$$ 
Ce découpage est analogue de celui indiqué dans les figures \ref{gl43aaa} et \ref{gl43bbb}.

\begin{lem}

La rétraction 
$$
f_{P}:\xx_{\gamma}\cap V_{b1}\to \xx_{\gamma}^{M}\cap V_{b1}
$$ 
induite une fibration en espaces affines 
$$
f_{P}:\xx_{\gamma}\cap \bigsqcup_{w\in \Sigma_{0}}C_{\ba}(w)\to \xx_{\gamma}^{M}\cap \bigsqcup_{w\in \Sigma_{0}}C_{\ba}(w),
$$
et du même si on remplace $\Sigma_{0}$ par $\Sigma_{i,c'},\,\Sigma^{*}_{i,c''}$.

\end{lem}

\begin{proof}

Puisque on a déjà le lemme \ref{gl4retractaffine}, d'après la proposition \ref{techfam}, il suffit de montrer que la dimension de 
$$
\frac{\ad(\gamma)\ku}{\ad(\gamma)\ku\cap \Ad(g)\kg(\co)}
$$
est indépendante de $g$ pour $gK$ dans $\xx_{\gamma}^{M}\cap \bigsqcup_{w\in \Sigma_{0}}C_{\ba}(w)$. Même argument si on remplace $\Sigma_{0}$ par $\Sigma_{i,c'},\,\Sigma^{*}_{i,c''}$.

Pour $w\in R_{b1}$, on a $C_{\ba}(w)\cap \xx^{M}=I_{\ba}'^{M}wK/K$. Pour $g\in I_{\ba}'^{M}$, on a
$$
\frac{\ad(\gamma)\ku}{\ad(\gamma)\ku\cap \Ad(gw)\kg(\co)}\cong \frac{\ad(\gamma)\ku}{\ad(\gamma)\ku\cap \Ad(w)\kg(\co)},$$
car $I_{\ba}'^{M}$ normalise $\ad(\gamma)\ku$. La dimension du dernier terme est
$$
\max\{w_{2}+m-n_{1},\,0\}+\sum_{i=3}^{4}\max\{w_{i}+m-n,\,0\}.
$$

\begin{enumerate}

\item Pour $c<\left[\frac{-b+n-n_{1}}{3}\right]$, soit $w\in \Sigma_{0}$, la dimension est 
\begin{eqnarray*}
&&(w_{2}+m-n_{1})+\sum_{j=3}^{4}(w_{j}+m-n)\\ &&=3m-b-n_{1}-2n,
\end{eqnarray*}
donc constante sur $\Sigma_{0}$.

\item Pour $c>\left[\frac{-b+n-n_{1}}{3}\right]$, soit $w\in \Sigma_{0}$, la dimension est $0$.

\item Pour $w\in \Sigma_{i,c'}$, la dimension est 
\begin{eqnarray*}
\begin{cases}2(m-n)-b-c', & \text{si } i=2;\\ 2m-n_{1}-n-b-c', & \text{si } i=3,4; 
\end{cases}
\end{eqnarray*}
donc constante sur $\Sigma_{i,c'}$.

\item Pour $w\in \Sigma_{i,c''}^{*}$, la dimension est 
\begin{eqnarray*}
\begin{cases} c''+m-n_{1}, & \text{si } i=2; \\c''+m-n, & \text{si } i=3,4;\end{cases}
\end{eqnarray*}
donc constante sur $\Sigma_{i,c''}^{*}$.
\end{enumerate}\end{proof}

Par l'isomorphisme $(\ref{translategl3})$, les intersections 
$$
\xx^{M}_{\gamma}\cap \bigsqcup_{w\in \Sigma_{0}}C_{\ba}(w),\quad\xx^{M}_{\gamma}\cap \bigsqcup_{w\in \Sigma_{i,c'}}C_{\ba}(w),\quad 
\xx^{M}_{\gamma}\cap \bigsqcup_{w\in \Sigma^{*}_{i,c''}}C_{\ba}(w).
$$
admettent des pavages en espaces affines d'après le théorème $\ref{famgl3}$ et le corollaire $\ref{gl3alt}$. Comme expliqué dans la construction de \S2.4, on peut les ordonner pour en déduire un pavage en espaces affines de $\xx_{\gamma}^{M}\cap V_{b1}$. D'après le lemme précédent, les intersections 
$$
\xx_{\gamma}\cap \bigsqcup_{w\in \Sigma_{0}}C_{\ba}(w),\quad\xx_{\gamma}\cap \bigsqcup_{w\in \Sigma_{i,c'}}C_{\ba}(w),\quad 
\xx_{\gamma}\cap \bigsqcup_{w\in \Sigma^{*}_{i,c''}}C_{\ba}(w).
$$
admettent aussi des pavages en espaces affines. En utilisant le même ordre que leurs analogues ci-dessus, on en déduit un pavage de $\xx_{\gamma}\cap V_{b1}$ en espaces affines.
\end{proof}

\begin{rem}
\begin{enumerate}

\item La méthode que l'on a développé pour paver les fibres de Springer affines pour $\gl_{4}$ peut être généralisée aux groupes classiques de rang $2$ et $3$ sans grandes difficultés. 


\item  La difficulté principale pour généraliser cette méthode à $\gl_{d},\,d\geq 5$ est due au fait que les intersections $\xx_{\gamma}\cap S_{\varpi}^{c}(v)$ ne sont pas pure si $\varpi$ n'est pas conjugué à $\varpi_{1}$ ou $\varpi_{d-1}$ sous l'action de $W$. 

\end{enumerate}

\end{rem}

\section*{Appendice: Forme minimale d'un élément semi-simple régulier non-ramifié}

Soit $G=\gl_{d}$, $T$ le tore maximal des matrices diagonales. 

\begin{defn}
L'élément régulier $\gamma\in \kt(F)$ est dit \textit{en forme minimale} s'il satisfait à la condition 
$$
\val(\alpha_{i,j}(\gamma))=\min_{i\leq l\leq j-1}\{\val(\alpha_{l}(\gamma))\},\quad \forall i,j=1,\cdots,d,\, i<j.
$$ 
Dans ce cas, on dit que sa valuation radicielle est le $(d-1)$-uplet $(\val(\alpha_{i}(\gamma))_{i=1}^{d-1})$.

\end{defn}

\begin{prop}\label{root}
Tout élément $\gamma\in \kt(F)$ est conjugué sous l'action du groupe de Weyl à au moins un élément en forme minimale.
\end{prop}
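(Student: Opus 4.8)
The plan is to proceed by induction on $d$, using the fact that the minimal-form condition is a statement about the valuations $\val(\alpha_{i,j}(\gamma)) = \val(\gamma_i - \gamma_j)$ of all the differences of the diagonal entries $\gamma_1, \dots, \gamma_d$ of $\gamma$. The Weyl group $\ks_d$ acts by permuting these entries, so the content of the statement is purely combinatorial: given a finite set of elements $\gamma_1, \dots, \gamma_d \in F$ with pairwise distinct entries (regularity), we must reorder them so that for every $i < j$ one has $\val(\gamma_i - \gamma_j) = \min_{i \le l \le j-1} \val(\gamma_l - \gamma_{l+1})$.

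First I would record the elementary ultrametric fact that for any three entries, $\val(\gamma_a - \gamma_c) \ge \min(\val(\gamma_a - \gamma_b), \val(\gamma_b - \gamma_c))$, with equality whenever the two valuations on the right differ. The key step is then to choose the first entry $\gamma_{\sigma(1)}$ well: I would pick, among all the $d$ entries, one achieving an \emph{extreme} position — concretely, choose indices $p \ne q$ realizing the minimum $n := \min_{a \ne b}\val(\gamma_a - \gamma_b)$, and split the remaining entries into the two nonempty "balls" according to whether they are $\val \ge n$-close to $\gamma_p$ or to $\gamma_q$ (every other entry lies in exactly one of these, since being close to both would force a difference of valuation $> n$ between two elements, contradicting minimality only if... — more carefully, one shows the relation "$\val(\gamma_a - \gamma_b) > n$" partitions the entries into at least two classes, and one orders the classes so that consecutive entries across a class boundary have valuation exactly $n$). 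Placing one whole class first and the complementary class afterwards, one checks that the "long" root values $\val(\gamma_i - \gamma_j)$ spanning the boundary are all equal to $n$, which is exactly $\min_{i \le l \le j-1}\val(\alpha_l(\gamma))$ since some boundary-crossing step $\alpha_l$ contributes the value $n$ and no step can be smaller. Within each class one applies the induction hypothesis (after translating all entries of the class by a common element to stay in $\kt(F)$, which does not change any difference).

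The main obstacle I expect is the bookkeeping in the inductive step: one must verify that the minimal-form condition \emph{internal} to each block is compatible with the condition \emph{across} block boundaries, i.e. that a long root $\alpha_{i,j}$ with $i$ in an early block and $j$ in a later block indeed satisfies $\val(\alpha_{i,j}(\gamma)) = \min_{i \le l \le j-1}\val(\alpha_l(\gamma))$, and not merely $\ge$. This requires knowing that the minimum of the step-valuations over the interval $[i, j-1]$ is attained at a boundary crossing (where the value is the global-within-this-range minimum $n$) rather than being dragged lower by some internal step — which holds because after the first split every internal step has valuation $> n$ by construction, so one must set up the induction so that "$n$" genuinely decreases (or the block shrinks) at each stage. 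Organizing the induction on the pair (number of entries, then the refinement level) cleanly, so that this monotonicity is transparent, is the delicate point; the rest is a direct application of the ultrametric inequality.
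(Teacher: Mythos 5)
Your argument is correct, but it runs dual to the paper's. You split at the coarsest level: setting $n=\min_{a\neq b}\val(\gamma_{a}-\gamma_{b})$, the relation ``$\val(\gamma_{a}-\gamma_{b})>n$'' is an equivalence relation by the ultrametric inequality, with at least two classes; listing the classes consecutively and recursing inside each one works because every cross-class difference has valuation exactly $n$ (it is $\leq n$ by definition of the classes and $\geq n$ by minimality), so for a root $\alpha_{i,j}$ straddling a boundary the minimum $\min_{i\leq l\leq j-1}\val(\alpha_{l}(\gamma))$ is attained at a boundary-crossing step and equals $n$, while for $i,j$ in the same (contiguous) class the identity is the inductive hypothesis for that class. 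The paper works instead from the finest level: it takes $n=\max_{a\neq b}\val(\alpha_{a,b}(\gamma))$, extracts a maximal equivalued cluster of valuation $n$, proves as an inner lemma that the valuations from any fixed outside entry to the entries of the cluster are all equal and $<n$, contracts the cluster to a single entry, and applies the induction hypothesis once to the contracted configuration. Both are inductions on the number of entries resting on the same ultrametric inequality; your cross-class equality is immediate from minimality, whereas the paper must prove its constancy lemma, but the paper invokes the induction hypothesis only once where you invoke it for each class. Two expository slips to fix: your first formulation via ``balls around $\gamma_{p}$ and $\gamma_{q}$'' is vacuous (every entry is within valuation $\geq n$ of both, since $n$ is the global minimum) and only the corrected formulation via the relation $\val(\gamma_{a}-\gamma_{b})>n$ is usable; and the parameter $n$ \emph{increases} (rather than decreases) when you pass to a class, which is harmless since the induction variable is the number of entries, not $n$.
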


\begin{proof}
On va montrer la proposition par récurrence. Pour $G=\gl_{2}$, le résultat est évident. On suppose que pour $G=\gl_{d'},\,d'<d,$ la proposition est démontrée.

Soit $\gamma=\diag(\gamma_{1},\cdots,\gamma_{d})$, soit $n=\max\{\val(\alpha_{i,j}(\gamma))\mid i\neq j\}$. Soit $\gamma'_{1}=\diag(\gamma_{\tau(1)},\cdots,\gamma_{\tau(a)})$ l'une des sous-matrices équivaluées de valuation $n$ de $\gamma$ qui est de taille maximale. On note $\gamma_{2}'=\diag(\gamma_{\tau(a+1)},\cdots,\gamma_{\tau(d)})$, et $\gamma'=\diag(\gamma_{1}',\gamma_{2}')$.

\begin{lem}
Pour $a+1\leq i\leq d$ fixé et $1\leq j\leq a$, les valuations $\val(\alpha_{i,j}(\gamma'))=\val(\alpha_{j,i}(\gamma'))$ sont toutes les mêmes et strictement plus petites que $n$.
\end{lem}

\begin{proof}
S'il existe $a+1\leq i_{0}\leq d,\, 1\leq j_{0}\leq a$ tel que $\val(\alpha_{i_{0},j_{0}}(\gamma'))=n$, alors pour tout $1\leq j\leq a$, les inégalités 
$$
n\geq\val(\alpha_{i_{0},j}(\gamma'))\geq \min\{\val(\alpha_{i_{0},j_{0}}(\gamma')),\val(\alpha_{j_{0},j}(\gamma'))\}=n
$$
entrainent que $\val(\alpha_{i_{0},j}(\gamma'))=n$, i.e. que la matrice $\diag(\gamma_{\tau(1)},\cdots,\gamma_{\tau_{(a)}},\gamma'_{i_{0},i_{0}})$ est équivaluée de valuation $n$, contradiction à l'hypothèse que $\gamma_{1}'$ est de taille maximale. Donc $\val(\alpha_{i,j}(\gamma'))<n$, pour tout $a+1\leq i\leq d, \,1\leq j\leq a$.

Par conséquent, pour tout $1\leq k,l\leq a$, on a 
$$
\val(\alpha_{i,k}(\gamma'))=\min\{\val(\alpha_{i,l}(\gamma')),\,\val(\alpha_{l,k}(\gamma'))=n\}=\val(\alpha_{i,l}(\gamma')).
$$  
\end{proof}

Donc on peut ``contracter'' $\gamma'_{1}$ en un élément $\delta_{1}\in F$ tel que $\val(\delta_{1})=n$ et les valuations radicielles ne changent pas à l'extérieure de $\gamma'_{1}$. On note $\delta=(\delta_{1}, \gamma'_{2})$. On observe que la matrice $\gamma'_{1}$ est toujours de taille strictement plus grande que $1$, donc $\delta$ est de taille strictement plus petite que $d-1$. Par l'hypothèse de récurrence, $\delta$ admet une forme minimale $\tau'(\delta)$. En rempla\c cant l'élément $\delta_{1}$ dans $\tau'(\delta)$ par $\gamma'_{1}$, on trouve une conjugaison de $\gamma$ en forme minimale.\end{proof}

\begin{rem}

En général, un élément $\gamma$ peut être conjugué à plusieurs éléments en forme minimale.\end{rem}

\end{document}